\title{Adaptive First-Order System Least-Squares Finite Element Methods
for Second Order Elliptic Equations in Non-Divergence Form
}
\author{
  Weifeng Qiu \thanks{Department of Mathematics, City University of Hong Kong, Kowloon Tong, Hong Kong SAR, China
    (\email{weifeqiu@cityu.edu.hk}). The author is supported by a Grant from the Research Grants Council of the Hong Kong Special Administrative Region, China (Project No. CityU 11304017)}
  \and
  Shun Zhang\thanks{Department of Mathematics, City University of Hong Kong, Kowloon Tong, Hong Kong SAR, China
   (\email{shun.zhang@cityu.edu.hk})}
}
\newtheorem{thm}{Theorem}[section]
\newtheorem{lem}[thm]{Lemma}
\newtheorem{rem}[thm]{Remark}
\newtheorem{assumption}[thm]{\textit{Assumption}}
\begin{document}
\newcommand{\BX}{{\bf X}}
\newcommand{\cv}{{\cal V}}
\newcommand{\cW}{{\cal W}}
\newcommand{\co}{{\cal O}}

\def\abstract{
\advance \rightskip by 10mm
\advance \leftskip by 10mm
\vspace{-0.8em}
\noindent
\small{\bf Abstract.}
}
\def\endabstract{\par\normalsize\rm}

\def\Xint#1{\mathchoice
{\XXint\displaystyle\textstyle{#1}}%
{\XXint\textstyle\scriptstyle{#1}}%
{\XXint\scriptstyle\scriptscriptstyle{#1}}%
{\XXint\scriptscriptstyle\scriptscriptstyle{#1}}%
\!\int}
\def\XXint#1#2#3{{\setbox0=\hbox{$#1{#2#3}{\int}$}
\vcenter{\hbox{$#2#3$}}\kern-.5\wd0}}
\def\ddashint{\Xint=}
\def\dashint{\Xint-}

%Greek Letters
\def\a{\alpha}
\def\b{\beta}
\def\d{\delta}\def\D{\Delta}
\def\e{\epsilon}
\def\g{\gamma}\def\G{\Gamma}
\def\k{\kappa}
\def\lam{\lambda}\def\Lam{\Lambda}
\renewcommand\o{\omega}\renewcommand\O{\Omega}
\def\s{\sigma}\def\S{\Sigma}
\renewcommand\t{\theta}\def\vt{\vartheta}
\newcommand{\vphi}{\varphi}
\def\z{\zeta}

\newcommand{\tsigma}{\tilde{\s}}
\newcommand{\tbsigma}{\tilde{\bsigma}}
\def\te{\tilde{\e}}
\def\tu{\tilde{u}}

\newcommand{\bchi}{\mbox{\boldmath$\chi$}}
\newcommand{\bdelta}{\mbox{\boldmath$\delta$}}
\newcommand{\bepsilon}{\mbox{\boldmath$\epsilon$}}
\newcommand{\bfeta}{\mbox{\boldmath$\eta$}}
\newcommand{\bgamma}{\mbox{\boldmath$\gamma$}}
\newcommand{\bomega}{\mbox{\boldmath$\omega$}}
\newcommand{\bvphi}{\mbox{\boldmath$\varphi$}}
\newcommand{\bphi}{\mbox{\boldmath$\phi$}}
\newcommand{\bPhi}{\mbox{\boldmath$\Phi$}}
\newcommand{\bpsi}{\mbox{\boldmath$\psi$}}
\newcommand{\bPsi}{\mbox{\boldmath$\Psi$}}
\newcommand{\bsigma}{\mbox{\boldmath$\sigma$}}
\newcommand{\btau}{\mbox{\boldmath$\tau$}}
\newcommand{\bxi}{\mbox{\boldmath$\xi$}}
\newcommand{\brho}{\mbox{\boldmath$\rho$}}
\newcommand{\bbeta}{\mbox{\boldmath$\beta$}}
\newcommand{\bzeta}{\mbox{\boldmath$\zeta$}}

\def\bk{\boldsymbol{\kappa}}
\def\bmu{\boldsymbol\mu}
\def\bxi{\boldsymbol{\xi}}
\def\bz{\boldsymbol{\zeta}}
%%%%%%%%

%English Letters
\def\ba{{\bf a}}
\def\bb{{\bf b}}
\def\bc{{\bf c}}
\def\be{{\bf e}}
\def\bff{{\bf f}}
\def\bg{{\bf g}}
\def\bn{{\bf n}}
\def\bp{{\bf p}}
\def\bq{{\bf q}}
\def\bs{{\bf s}}
\def\bt{{\bf t}}
\def\bu{{\bf u}}
\def\bv{{\bf v}}
\def\bw{{\bf w}}
\def\bx{{\bf x}}
\def\by{{\bf y}}
\def\bzz{{\bf z}}

\def\bD{{\bf D}}
\def\bE{{\bf E}}
\def\bF{{\bf F}}
\def\bH{{\bf H}}
\def\bJ{{\bf J}}
\def\bV{{\bf V}}
\def\bU{{\bf U}}
\def\bW{{\bf W}}
\def\bX{{\bf X}}
\def\bY{{\bf Y}}

\def\cA{{\cal A}}
\def\cC{{\cal C}}
\def\cD{{\cal D}}
\def\cE{{\cal E}}
\def\cF{{\cal F}}
\def\cG{{\cal G}}
\def\cI{{\cal I}}
\def\cJ{{\cal J}}
\def\cK{{\cal K}}
\def\cL{{\cal L}}
\def\cO{{\cal O}}
\def\cP{{\cal P}}
\def\cQ{{\cal Q}}
\def\cR{{\cal R}}
\def\cS{{\cal \Sigma}}
\def\cT{{\cal T}}
\def\cU{{\cal U}}
\def\cV{{\cal V}}

\def\scT{{_\cT}}
\def\sD{{_D}}
\def\sE{{_E}}
\def\sF{{_F}}
\def\sFz{{_{F_z}}}
\def\sK{{_K}}
\def\sI{{_I}}
\def\sb{{_b}}
\def\sN{{_N}}

\def\curl{{{\bf curl} \ }}
\def\rot{{\mbox{rot}\ }}
\def\BPI{{\bf \Pi}}

\def\cth{\cT_h}
\def\ctH{\cT_H}

\def\tJ{\tilde{\J}}

\def\hK{\widehat{K}}
\def\hx{\widehat{x}}
\def\hy{\widehat{y}}
\def\bhv{\widehat{\bv}}
%%%%%%%%%

%math symbols
\def\l{\ell}
\def\bl{\boldsymbol{\ell}}
\def\col{\colon}
\def\f12{\frac12}
\def\dfrac{\displaystyle\frac}
\def\dint{\displaystyle\int}
\def\nab{\nabla}
\def\p{\partial}
\def\sm{\setminus}
\def\dsum{\displaystyle\sum}
\newcommand{\pp}[2]{\frac{\partial {#1}}{\partial {#2}}}
\def\bzero{{\bf 0}}

\def\divv{\nab\cdot}
\def\divx{\nab_x\cdot}
\def\divtx{\nab_{t,x}\cdot}
\def\nabx{\nab_x}

\newcommand{\grad}{\nabla}
\newcommand{\curlt}{{\nabla \times}}
\newcommand{\gperp}{\nabla^{\perp}}
\newcommand{\gradt}{\nabla\cdot}

\def\forallqq{\quad\forall\,}
\def\aph{A^{1/2}}
\def\amh{A^{-1/2}}

\def\osc{{\rm osc \, }}

\def\Im{{\rm Im}}
\newcommand{\tr}{{\rm tr}}
\def\divvr{{\rm div}}
\def\curllr{{\rm curl}}
\def\curll{{\rm curl}}
\def\curl{{\bf curl}}
\newcommand{\bgrad}{{\bf grad}}
\newcommand\diam{\mathrm{diam\,}}
\renewcommand\Im{\mathrm{Im\,}}
\def\Span{\mbox{Span}}
\def\supp{\mbox{supp\,}}
\newcommand{\trace}{{\rm trace}}

\newcommand{\tri}{|\!|\!|}
\newcommand{\ljump}{\lbrack\!\lbrack}
\newcommand{\rjump}{\rbrack\!\rbrack}
%%%%%%%%%%%%%%%%%%%%%%%%%%%%%%%%%%%%%%%%%%%%%%
\newcommand{\bdm}{\begin{displaymath}}
\newcommand{\edm}{\end{displaymath}}
\newcommand{\beq}{\begin{equation}}
\newcommand{\eeq}{\end{equation}}
\newcommand{\beqa}{\begin{eqnarray}}
\newcommand{\eeqa}{\end{eqnarray}}
\newcommand{\beqas}{\begin{eqnarray*}}
\newcommand{\eeqas}{\end{eqnarray*}}
%misc
\newcommand{\ul}{\underline}
\newcommand{\wh}{\widehat}
\newcommand{\la}{\langle}
\newcommand{\ra}{\rangle}

%Spaces
\newcommand{\Lt}{L^2(\Omega)}
\newcommand{\Lts}{L^2(\Omega)^2}
\newcommand{\Ltc}{L^2(\Omega)^3}
\newcommand{\Ho}{H^1(\Omega)}
\newcommand{\Hoh}{H^1(\wh{\Omega})}
\newcommand{\Hoi}{H^1(\Omega_i)}
\newcommand{\Hos}{H^1(\Omega)^2}
\newcommand{\Hoc}{H^1(\Omega)^3}
\newcommand{\Hoch}{H^1(\wh{\Omega})^3}
\newcommand{\Hoci}{H^1(\Omega_i)^3}
\newcommand{\Hoz}{H^1_0(\Omega)}
\newcommand{\Ht}{H^2(\Omega)}
\newcommand{\Hti}{H^2(\Omega_i)}
\newcommand{\Hts}{H^2(\Omega)^2}
\newcommand{\Htc}{H^2(\Omega)^3}
\newcommand{\Htz}{H^0(\Omega)}
\newcommand{\Hh}{H^{1/2}(\Gamma)}
\newcommand{\Hhi}{H^{1/2}(\Gamma_i)}
\newcommand{\Hmh}{H^{-1/2}(\Gamma)}
\newcommand{\Hdiv}{H(\divvr;\,\Omega)}
\newcommand{\Hdivh}{H(\divv;\,\wh \Omega)}
\newcommand{\hcurl}{H(\curl\,A;\,\Omega)}
\newcommand{\Hcurl}{H(\curll\,A;\,\Omega)}
\newcommand{\Hcrl}{H(\curll\,;\,\Omega)}
\newcommand{\hcrl}{H(\curl\,;\,\Omega)}
\newcommand{\Hcrlh}{H(\curll\,;\,\wh\Omega)}
\newcommand{\hcrlh}{H(\curl\,;\,\wh\Omega)}
\newcommand{\Wdiv}{\BW_0(\mbox{\divv}\,;\,\Omega)}
\newcommand{\Wcurl}{\BW_0(\mbox{\curl}\,A;\,\Omega)}
\newcommand{\WcrossV}{\BW \times V}

\def\calS{{\cal S}}
\def\calT{{\cal T}}
\def\cB{{\cal B}}
\def\cH{{\cal H}}
\def\ba{{\mathbf{a}}}
\def\cM{{\cal M}}
\def\cN{{\mathcal{N}}}
\def\cE{{\mathcal{E}}}
\def\cT{{\mathcal{T}}}
\def\cJ{{\mathcal{J}}}
\def\cL{{\mathcal{L}}}

\def\bE{{\bf E}}
\def\bS{{\bf S}}
\def\br{{\bf r}}
\def\bW{{\bf W}}
\def\bLambda{{\bf \Lambda}}

\newcommand{\lJump}{[\![}
\newcommand{\rJump}{]\!]}
\newcommand{\jump}[1]{[\![ #1]\!]}

\newcommand{\st}{\tilde{\bsigma}}
\newcommand{\sh}{\hat{\bsigma}}
\newcommand{\rd}{\brho^{\Delta}}

\newcommand{\WH}{W\!H}
\newcommand{\NE}{N\!E}

\newcommand{\ND}{N\!D}
\newcommand{\BDM}{B\!D\!M}

\newcommand{\sT}{{_T}}
\newcommand{\sRT}{{_{RT}}}
\newcommand{\sBDM}{{_{BDM}}}
\newcommand{\sWH}{{_{WH}}}
\newcommand{\sND}{{_{ND}}}
\newcommand{\sV}{_\cV}

\newcommand{\dd}{\underline{{\mathbf d}}}
\newcommand{\C}{\rm I\kern-.5emC}
\newcommand{\R}{\rm I\kern-.19emR}
\newcommand{\W}{{\mathbf W}}
\def\3bar{{|\hspace{-.02in}|\hspace{-.02in}|}}
\newcommand{\A}{{\mathcal A}}

\newcommand*\cnorm[1]{\left\vert\!\left[\!\left]#1\right[\!\right]\!\right\vert}

\newcommand{\ratio}{{\,:\,}}     % for choice 3
\maketitle
\begin{abstract}
This paper studies adaptive first-order least-squares finite element methods for second-order elliptic partial differential equations in non-divergence form. Unlike the classical finite element method which uses weak formulations of PDEs not applicable for the non-divergence equation, the first-order least-squares formulations naturally have stable weak forms without using integration by parts, allow simple finite element approximation spaces, and have build-in a posteriori error estimators  for adaptive mesh refinements. 

The non-divergence equation is first written as a system of first-order equations by introducing the gradient as a new variable. Then two versions of least-squares finite element methods using simple $C^0$ finite elements are developed in the paper, one is the $L^2$-LSFEM which uses linear elements, the other is the weighted-LSFEM with a mesh-dependent weight to ensure the optimal convergence. Under a very mild assumption that the PDE has a unique solution, optimal a priori and a posteriori error estimates are proved. With an extra assumption on the operator regularity which is weaker than traditionally assumed, convergences in standard norms for the weighted-LSFEM are also discussed. $L^2$-error estimates are  derived for both formulations. We perform extensive numerical experiments for smooth, non-smooth, and even degenerate coefficients on smooth and singular solutions to test the accuracy and efficiency of the proposed methods.
\end{abstract}

% REQUIRED
\begin{keywords}
non-divergence elliptic equation, first-order system least-squares, finite element method,  a priori error estimate, a posteriori error estimate, adaptive method. 
\end{keywords}

% REQUIRED
%\begin{AMS}
%65N30
%\end{AMS}

\section{Introduction}\label{intro}
In this paper we consider finite element approximations of the following elliptic PDE in non-divergence form:

\begin{align}
 \label{eq_nondiv}
-A : D^2 u &= f \quad \mbox{in} \,\ \O, \\ \nonumber
                        u &= 0 \quad \mbox{on} \,\ \p\O.
\end{align}
Here, the domain $\O \in \R^d$ is an open and bounded polytope for $d =2$ or $3$, the coefficient matrix $A = A(x) \in L^{\infty}(\O)^{d\times d}$ is a positive definite matrix with eigenvalues bounded by $\lambda >0$ below and $\Lambda >0$ above on $\O$, but not necessarily differentiable. The righthand side $f$ is assumed in $L^2(\O)$.

Note that when $A\in [C^1(\O)]^{n\times n}$, we have the following equation in divergence form, where $\gradt A$ is taken row-wise:
\begin{align}
 \label{eq_div}
-\gradt(A \nabla u) + (\gradt A) \cdot \nabla u &= f \quad \mbox{in} \,\ \O, \\ \nonumber
                        u &= 0 \quad \mbox{on} \,\ \p\O.
\end{align}

The elliptic PDE in non-divergence form arises in the linearization of fully nonlinear PDEs, for example, stochastic control problems, nonlinear elasticity, and mathematical finance. The matrix $A(x)$ is not smooth nor even continuous in many such cases. For example, for fully nonlinear PDEs solving by $C^0$ finite element methods, the coefficient matrix of its linearization is possibly only element-wisely smooth if the coefficients containing derivatives of the numerical solution. 

Since the matrix $A$ is not differentiable, the standard weak notion of elliptic equation is not applicable.  The existence and uniqueness of equations of non-divergence form are often based on the classical or strong senses of the solutions, see discussions in \cite{NZ:18,NW:19}. These PDE theories often assume that the domain $\O$ is convex, the boundary is sufficiently smooth, or some other restrictive conditions on the smoothness of $A$. For a discontinuous $A$, there are possibilities that the solution is non-unique, see an example given in \cite{NZ:18}. It is worth to mention that these available theoretical PDE results are all sufficient theories. For example, since the Poisson equation is also an example of the non-divergence equation with $A=I$, the existence and uniqueness condition of the equation \eqref{eq_nondiv} dependence on the domain can be very weak.

There are several numerical methods are available for the problem in non-divergence form. Based on discrete Calderon-Zygmond estimates, Feng, Neilan, and co-authors developed finite element methods for problems with a continuous coefficient matrix in \cite{FHN:17,FNS:18,Nei:17}. For equations with discontinuous coefficients satisfying the Cordes condition, a discontinuous Galerkin method \cite{SS:13}, a mixed method \cite{Gal:17}, and a non-symmetric method \cite{NW:19} are developed. A weak Galerkin method is developed by Wang and Wang in \cite{WW:18}. The analysis of these papers mostly assumes the full $H^2$ regularity of the operator and studies the $H^2$-error estimates of the approximations. In some sense, these methods keep the non-divergence operator second order and borrow techniques from variational fourth-order problems. Nochetto and Zhang \cite{NZ:18} studied a two-scale method, which is based on the integro-differential approach and focuses on $L^{\infty}$ error estimates.

Traditionally, the finite element method is based on the variational formulation of an elliptic equation, where the integration by parts plays an essential role. The integration by parts can shift a derivative from the trial variable to the test variable, thus reduces the differential order of the operator.  For \eqref{eq_nondiv}, the integration by parts is not available. Luckily, there is another natural method to reduce the differential order of a PDE operator by introducing another auxiliary variable. We can reduce the second order equation into a system of a first order equation by using the new auxiliary variable. Normally, for the first-order system, we can two approaches. One is the mixed method which also involving the integration by parts and has difficulties to ensure the stability. The other method is the least-squares finite element method (LSFEM). The first-order system least-squares principle first re-write the PDE into a first-order system, then define an artificial, externally defined energy-type principle. The energy functional can be defined as summations of weighted residuals of the system. With the first-order least-squares functionals, corresponding LSFEMs can be defined. No integration by parts is needed to define the least-squares principle and thus the LSFEM, thus the first-order system LSFEM is ideal for the second order elliptic equation of non-divergence form.

Beside the obvious advantage of non-requirement of integration by parts, the LSFEM has other advantages. First, the least-squares weak formulation and its associated LSFEM using conforming finite element spaces are automatically coercive as long as the first-order system is well-posed. This is a significant advantage over other numerical methods since the well-posedness theory of the equation in non-divergence form is in general only sufficient. On the other hand, even without a rigorous mathematical proof, the elliptic equation in non-divergence form is often a result of some physical process that we are sure that a unique solution exists. Thus, in LSFEMs developed in this paper, we can reduce the condition of the PDE into a simple well-posedness without specifing the condition explicitly.

The other advantages of LSFEMs include conforming discretizations lead to stable and, ultimately, optimally accurate methods, the resulting algebraic problems are symmetric, positive definite, and can be solved by standard and robust iterative methods including multigrid methods.

The last important advantage of the LSFEMs is that it has a build-in a posteriori error estimator. The solution is probably singular due to the geometry of the domain or the coefficient matrix. Also, for problems like reaction diffusion equations, interior or boundary layers appear. To solve these problems efficiently, the a posteriori error estimator and adaptive mesh refinement algorithm are necessary.

In this paper, by introducing the gradient as an auxiliary variable, we first write the equation in non-divergence form into a system of first-order equations, then develop two least-squares minimization principles and two corresponding LSFEMs: one is based on an $L^2$-norm square sum of the residuals and the other is based on a mesh-size weighted $L^2$-norm square sum  of the residuals. The two methods are called $L^2$-LSFEM and weighted-LSFEM, respectively. For the $L^2$-LSFEM, simpliest linear $C^0$-finite elements are used to approximate both the solution and the gradient. For the weighted-LSFEM, the $C^0$-finite element of degree $k$, $k\geq 2$ is used to approximate the solution, while the degree $k-1$ $C^0$-finite element is used to approximate the gradient. Under the very weak assumption that the coefficient and domain is good enough to guarantee the existence and uniqueness of a solution, we show both continuous least-squares weak forms and their corresponding discrete problems are well-posed. A priori and a posteriori error estimates with respect to the least-squares norms are then discussed. 

Numerical methods for non-divergence equation often use the following operator regularity assumption
$$
\|u\|_2\leq C\|A\ratio D^2 u\|_0
$$
to derive stability and error estimates. Unlike these papers, for the weighted-LSFEM, the error estimates of error in the $H^1$-norm and the discrete broken $H^2$-norm are investigated with a weaker assumption, see our discussion in section 4.2. Under stronger regularity assumptions, we show that the $L^2$-norm of the error of the solution is one order higher than the least-squares norm of the error, providing the approximation degree for the solution is at least three. For the $L^2$-LSFEM, we show the optimal $L^2$ and $H^1$-error estimates with a solution regularity assumption. We perform extensive numerical experiments for smooth, non-smooth, and even degenerate coefficients on smooth and singular solutions to test the accuracy and efficiency of the proposed methods. With uniform refinements, we show the convergence orders match with the theory. With adaptive mesh refinements, optimal convergences results are obtained for singular solutions.

The LSFEM is well developed for the elliptic equations in divergence form, see for example, \cite{Cai:94,Cai:97,BLP:97,BG:09,CFZ:15,CQ:17}. A posteriori error estimates and adaptivity algorithms based on LSFEMs can be founded in \cite{BMM:97,CZ:10}. Compared the the LSFEMs for the elliptic equation in divergence form, the non-divergence equation has many differences in the stability analysis and choices of the finite element sub-spaces due to the non-divergence structure. We remark these differences in the various places of the paper as comparisons.

In a summary, the LSFEMs developed in this paper have several advantages compared to existing numerical methods: they are automatically stable under very mild assumption; they are easy to program due to that only simple Lagrange finite elements without jump terms are used; adaptive algorithms with the build-in a posteriori error estimators  can handle problems with singular solutions or layers; under a condition on the operator regularity which is weaker than traditionally assumed,  error estimates in standard norms are proved.

There are two least-squares finite element methods available for the non-divergence equation. None of them use a first-order reformulation. The paper \cite{Gal:17} uses a second order least-squares formulation with $C^1$-finite element approximations. The simple method developed by Ye and Mu in \cite{MY:17} uses $C^0$ -finite element spaces with orders higher than two and penalize the continuity of the solution and the normal component of the flux.  

The remaining parts of this article are as follows: section 2 defines the first-order system least-squares weak problems and discusses their stabilities; section 3 presents the corresponding LSFEMs and their a priori and a posteriori error estimates in least-squares norms. Error estimates in other norms are discussed in sections 4 and 5 for the weighted and $L^2$ versions of methods, separately. Numerical experiments are presented in section 6. 

Standard notation on function spaces applies throughout this article. Norms of functions in Lebesgue
and Sobolev space $H^{k}(\omega)$ ($L^2(\omega) = H^0(\omega)$) are denoted by $\|\cdot\|_{k,\omega}$. The subscript $\omega$ is omitted when $\omega=\O$. The inner product of real-valued $d\times d$ matrices $A\ratio B$ is denoted by $A\ratio B = \sum_{i,j}^d a_{ij}b_{ij}$. We use $D^2 v$ to denote the Hessian of $v$.
%%%%%%%
\section{First-Order System Least-Squares Weak Problems}
\setcounter{equation}{0}

\subsection{Existence and uniqueness assumption}
Define the solution space of \eqref{eq_nondiv}:
\beq \label{defV}
V := \{ v\in H^1_0(\O)\mbox{  and  } A\ratio  D^2 v \in L^2(\O)\}.
\eeq
Notice that the space $V$ is weaker then $H^1_0(\O)\cap H^2(\O)$, since we can expect that even though an individual $\p_{ij}^2 v$, $i,j=1,\cdots, d$ is not in $L^2(\O)$, due to cancelation or good properties of $A$, $A\ratio D^2 v$ belongs to $L^2(\O)$. For example, let $A=I$ and $w\in H^1_0(\O)$ is the solution of the Poisson equation $-\Delta w=1$ on an L-shaped domain $\O = (-1,1)^2 \backslash [0,1)\times (-1,0]$, then $w \in V$ but clearly $w\not\in H^1_0(\O)\cap H^2(\O)$.

We first state the assumption of the existence and uniqueness of the solution.
\begin{assumption} \label{assump_EU}
{\bf (Existence and uniqueness of the solution of the elliptic equation in non-divergence form)}
Assume that  the coefficient matrix $A$ and the domain $\O$ are nice enough, such that the equation \eqref{eq_nondiv} has a unique solution $u\in V$ for any $f\in L^2(\O)$. 
\end{assumption}
\begin{rem}
There are various theories to ensure the existence and uniqueness of  the equation, for example:
\begin{enumerate}
\item (Classical solution \cite{GT:01}) A classical solution exists if $A$ is H\"older continuous and if $\p \O$ is sufficiently smooth.

\item (Strong solution \cite{CFL:93}) If $A\in VMO(\O)\cap L^\infty(\O)$, a vanishing mean oscillation matrix with a uniform VMO-modulus of continuity, and if $\O$ is of class $C^{1,1}$, then there exists a unique solution $u\in W^{2,p}$ to the problem.

\item ($H^2$ solution \cite{MPS:00}) If the domain is convex and if $A$ satisfies the Cordes condition \eqref{Cordes}, then there exists a solution $u\in H^2(\O)$.
\end{enumerate}
More detailed discussions can be found in the introduction of \cite{NZ:18}. We do find these theories are only sufficient theories. Besides many examples of Poisson equations on non-convex domains, the Test 3 problem from \cite{FHN:17}( see also our numerical test 6.4) is an example that the matrix $A$ is not uniformly elliptic but a unique solution still exists.
\end{rem}

\subsection{Least-squares problems}

Introduce a new gradient variable $\bsigma = \nabla u$, we have the following first-order system:
\beq
\label{1stordersys}
\left\{
\begin{array}{rllll}
\bsigma-\nabla u &=&0, &  \mbox{in } \O, \\[2mm]
-A : \nabla \bsigma &=&f,&  \mbox{in } \O,\\[2mm]
u&=&0,&  \mbox{on } \p\O.
\end{array}
\right.
\eeq
It is clear that $u\in H_0^1(\O)$. For the gradient $\bsigma$, the appropriate solution space is:
$$
Q := \{ \btau \in L^2(\O)^d : A\ratio  \nabla \btau \in L^2(\O)\}.
$$
\begin{rem}
As a comparison, consider the equation in divergence form 
\begin{align}
 \label{eqdiv}
-\gradt(A \nabla u)  &= f \quad \mbox{in} \,\ \O, \\ \nonumber
                        u &= 0 \quad \mbox{on} \,\ \p\O.
\end{align}
It is well known that the flux $- A\nabla u \in H(\divvr;\O)$ for $f\in L^2(\O)$. The space $H(\divvr;\O)$ is well studied \cite{GR:86,BBF:13}. One of its property is that the normal component of its member vector function is continuous across the interfaces in the weak sense. Also, the negative divergence operator is the the dual operator of the gradient. These properties play important roles in the design of least-squares finite element methods for elliptic equations in divergence form. More importantly, $H(\divvr;\O)$-conforming finite element spaces such as Raviart-Thomas (RT) finite element space \cite{BBF:13} with good approximation properties are also well known.

For the elliptic equation non-divergence form \eqref{eq_nondiv}, the property of the space $Q$ is barely known. Similar to the space $V$, for a vector function $\btau \in Q$, we can expect that even though an individual $\p \btau/\p x_i$, $i=1,\cdots, d$ is not in $L^2(\O)$, due to cancelation or good properties of $A$, $A\ratio \nabla \btau$ may belong to $L^2(\O)$. But if we want to design a numerical method for a general non-divergence elliptic equation, we basically cannot assume or use any of these information, and we can not design an $A$-intrinsic finite element subspace of $Q$ as $H(\divvr)$-conforming finite elements.   
\end{rem}

Let $\cT = \{K\}$ be a triangulation of $\O$ using simplicial elements. The mesh $\cT$ is assumed to be shape-regular, but it does not to be quasi-uniform. Let $h_K$ be the diameter of the element $K\in \cT$.

We introduce two versions of least-squares functionals:
\begin{eqnarray}
\cJ_h(v,\btau; f) &:= & \sum_{K\in\cT} h_K^2
\|f + A\ratio  \nabla \btau \|_{0,K}^2 +\|\btau-\nabla v \|_{0}^2, \;\; \forall (v,\btau) \in H^1_0(\O)\times Q,\\
\cJ_0(v,\btau; f) &:= & \|f + A\ratio  \nabla \btau \|_{0}^2 +\|\btau-\nabla v \|_{0}^2, \quad \forall (v,\btau) \in H^1_0(\O)\times Q.
\end{eqnarray}
The functionals $\cJ_h$ and $\cJ_0$  are called the weighted version and the $L^2$ version, respectively. We use the notation $\cJ$ to denote both $\cJ_h$ and $\cJ_0$ when two formulations can be presented in a unified framework and no confusion is caused.

The least-squares minimization problem is: seek $(u,\bsigma) \in H^1_0(\O)\times Q$, such that
\beq \label{LS}
\cJ(u,\bsigma; f) = \inf_{(v,\btau) \in H^1_0(\O)\times Q} \cJ(v,\btau; f).
\eeq
The corresponding Euler-Lagrange formulations are: seek $(u,\bsigma) \in H^1_0(\O)\times Q$, such that
\beq \label{LS_EL_h}
a_h((u,\bsigma),(v,\btau)) = -\sum_{K\in\cT} h_K^2(f, A\ratio \nabla \btau )_K, \quad \forall (v,\btau) \in H^1_0(\O)\times Q,
\eeq
and find $(u,\bsigma) \in H^1_0(\O)\times Q$, such that
\beq \label{LS_EL_L2}
a_0((u,\bsigma),(v,\btau)) = -(f, A\ratio \nabla \btau ), \quad \forall (v,\btau) \in H^1_0(\O)\times Q,
\eeq
where for all $ (w,\brho)$ and $(v,\btau) \in H^1_0(\O)\times Q$, the bilinear forms are defined as
\begin{eqnarray*}
a_h((w,\brho),(v,\btau)) &:= & (\brho-\nabla w, \btau-\nabla v) + \sum_{K\in\cT}h_K^2(A\ratio \nabla \brho, A\ratio \nabla \btau )_K,\\
\mbox{and}\quad a_0((w,\brho),(v,\btau)) &:= & (\brho-\nabla w, \btau-\nabla v) + (A\ratio \nabla \brho, A\ratio \nabla \btau ).
\end{eqnarray*}

\begin{rem}
The least-squares formulations can be easily extended to more general cases, for example, non-homogeneous Dirichlet boundary conditions and equations with convection and advection terms. 

For example, for the general elliptic equation
\beq
-A : D^2 u + \bbeta \cdot \nabla u + c u= f
\eeq
with homogeneous boundary condition, let $\bsigma =\nabla u$, the $L^2$ least-squares functional can be defined as:
$$
\cJ_0(v,\btau; f) :=  \|f + A\ratio  \nabla \btau -\bbeta \cdot \btau - cv\|_{0}^2 +\|\btau-\nabla v \|_{0}^2, \quad \forall (v,\btau) \in H^1_0(\O)\times Q.
$$

To have a better robustness with respect to the coefficients, coefficient-weighted versions can also be used. For example, define the $L^2$ least-squares functional as:
$$
\cJ_0(v,\btau; f) :=  \|\gamma(f + A\ratio  \nabla \btau -\bbeta \cdot \btau - c v)\|_{0}^2 +\|A^{1/2}(\btau-\nabla v) \|_{0}^2, \quad \forall (v,\btau) \in H^1_0(\O)\times Q,
$$
where $\gamma >0$ is a weight defined as a function of the coefficients $A$, $\bbeta$, and $c$.

The $h$-weighted least-squares functional can be defined similarly. 
\end{rem}

\begin{rem}
For the weighted functional $\cJ_h$, the $h$-weight is on the term $\|f+A\ratio \nabla \btau\|_0$, similarly, we can also use, 
$$
\tilde{\cJ}_h(v,\btau; f) :=  
\|f + A\ratio  \nabla \btau \|_{0}^2 + \sum_{K\in\cT} h_K^{-2}\|\btau-\nabla v \|_{0,K}^2, \;\; \forall (v,\btau) \in H^1_0(\O)\times Q,
$$
as the weighted least-squares functional. For a uniform mesh, $\cJ_h(v,\btau; f)$ and $\tilde{\cJ}_h(v,\btau; f)$ are equivalent. But for an adaptively refined mesh, $\cJ_h(v,\btau; f)$ beahives more like a minimization problem with respect to the $H^1$-norm of $u$ while $\tilde{\cJ}_h(v,\btau; f)$ is more like an optimization with respect to the $H^2$-norm. We prefer the $\cJ_h(v,\btau; f)$ version in this paper since the minimum requirement of $u$ is $H^1(\O)$ not $H^2(\O)$. Earlier discussion on the mesh-dependent least-squares methods can be found in \cite{AKS:85}.
\end{rem}

\begin{lem} \label{norm1}
The following are norms for $(v,\btau) \in H^1_0(\O)\times Q$:
\begin{eqnarray*}
\tri (v,\btau) \tri_h^2 &:=& 
\sum_{K\in\cT}h_K^2\|A\ratio  \nabla \btau \|_{0,K}^2 +\|\btau-\nabla v\|_0^2, \\
\mbox{and}\quad \tri (v,\btau) \tri_0^2 &:=& 
\|A\ratio  \nabla \btau \|_{0}^2 +\|\btau-\nabla v\|_0^2.
\end{eqnarray*}
\end{lem}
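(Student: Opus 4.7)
The properties of non-negativity, absolute homogeneity, and the triangle inequality follow immediately in both cases from the corresponding properties of the $L^2$-norm, since each functional is built as a sum of squares of $L^2$-norms of expressions depending linearly on $(v,\btau)$. So the only nontrivial point is to establish definiteness, i.e.\ that $\tri(v,\btau)\tri = 0$ implies $(v,\btau) = (0,\mathbf{0})$.

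My plan is to handle the two cases in parallel. Suppose $\tri(v,\btau)\tri_0 = 0$. Then both $\|\btau - \nabla v\|_0 = 0$ and $\|A\ratio \nabla \btau\|_0 = 0$, which give $\btau = \nabla v$ in $L^2(\Omega)^d$ and $A\ratio \nabla \btau = 0$ almost everywhere in $\Omega$. Substituting the first into the second yields $A\ratio D^2 v = 0$ in the $L^2$ sense. Combined with $v \in H_0^1(\Omega)$, this shows $v \in V$ solves the homogeneous non-divergence equation \eqref{eq_nondiv} with $f \equiv 0$. By the existence and uniqueness stipulated in Assumption \ref{assump_EU}, $v = 0$, and then $\btau = \nabla v = \mathbf{0}$.

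For the weighted version, the same reduction goes through once one observes that the element weights $h_K^2$ are strictly positive on each $K$: if $\tri(v,\btau)\tri_h = 0$, then $\sum_{K\in\cT} h_K^2 \|A\ratio \nabla \btau\|_{0,K}^2 = 0$ forces $A\ratio \nabla \btau = 0$ a.e.\ on each $K$, hence on all of $\Omega$, and $\|\btau - \nabla v\|_0 = 0$ again gives $\btau = \nabla v$. The remainder of the argument is identical.

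The only delicate step is the reduction to the uniqueness hypothesis, and this is exactly where Assumption \ref{assump_EU} is used; no other feature of the PDE enters. Everything else is a routine verification that a seminorm plus a unique-continuation-type identification becomes a genuine norm.
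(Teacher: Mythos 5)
Your proof is correct and follows essentially the same route as the paper's: both treat positivity, homogeneity, and the triangle inequality as immediate, and both reduce definiteness to observing that $\tri(v,\btau)\tri=0$ forces $\btau=\nabla v$ and $A\ratio\nabla\btau=0$ in $L^2$, so that $v\in V$ solves the homogeneous problem and Assumption \ref{assump_EU} yields $v=0$, $\btau=\mathbf{0}$. Your explicit remark that the strictly positive weights $h_K^2$ make the weighted case reduce to the unweighted one is a small clarification the paper leaves implicit, but it is not a different argument.
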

We use $\tri (v,\btau) \tri$ to denote both versions when no confusion is caused.
\begin{proof}
To prove that $\tri (v,\btau) \tri$ defines a norm on $H^1_0(\O)\times Q$, we only need to check conditions of a norm definition.
  
The linearity and the triangle inequality are obvious for $\tri (v,\btau) \tri$.

If $\tri (v,\btau) \tri =0$, due to the fact $\btau \in Q$, we have $A\ratio  \nabla \btau \in L^2(\O)$, thus
$$
A\ratio  \nabla \btau =0 \quad\mbox{and}\quad \btau =\nabla v,
$$
in the $L^2$ sense. This means, $v\in V$, and
$$
A\ratio  D^2 v=0 \mbox{  in  } \O, \quad v=0 \mbox{ on }\p\O,
$$
is true in the  $L^2$ sense. By the existence and uniqueness of assumption of the solution Assumption \ref{assump_EU}, $v=0$ and $\btau=0$. The norm $\tri \cdot\tri$ is then well defined for both the weighted and $L^2$ versions of definition.
\end{proof}
\begin{rem}
The condition $\btau \in Q$ is essential to the definition. This condition has the same role as the requirement of flux in $H(\divvr;\O)$ for the equation in divergence form, which implicitly implies some weak continuity condition of its member functions. 
\end{rem}

\begin{rem}
It is also clear that
\begin{eqnarray*}
\tri (v,\btau) \tri_{h,K}^2 &:=& 
h_K^2\|A\ratio  \nabla \btau \|_{0,K}^2 +\|\btau-\nabla v \|_{0,K}^2 \\
  \mbox{and}\quad
\tri (v,\btau) \tri_{0,K}^2 &:=& 
\|A\ratio  \nabla \btau \|_{0,K}^2 +\|\btau-\nabla v \|_{0,K}^2,
\end{eqnarray*}
are semi-norms on an element $K\in\cT$.
\end{rem}

\begin{lem} The bilinear form $a=a_h$ or $a_0$ is continuous and coercive:
\begin{eqnarray}
a((w,\brho),(v,\btau)) &\leq & \tri (w,\brho) \tri \tri (v,\btau) \tri, \quad \forall (w,\brho) \mbox{ and } (v,\btau) \in H^1_0(\O)\times Q, \\[1mm]
a((v,\btau),(v,\btau)) &= & \tri (w,\brho) \tri^2, \quad \forall  (v,\btau) \in H^1_0(\O)\times Q.
\end{eqnarray}
\end{lem}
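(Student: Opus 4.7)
The plan is to verify the two required inequalities directly from the definitions of $a$ and $\tri\cdot\tri$, treating the weighted and $L^2$ versions in parallel.

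For coercivity, I would simply substitute $(w,\brho)=(v,\btau)$ into the definition of $a$. The first term $(\btau-\nabla v,\btau-\nabla v)$ reproduces $\|\btau-\nabla v\|_0^2$, and the second term gives either $\sum_K h_K^2\|A\ratio\nabla\btau\|_{0,K}^2$ or $\|A\ratio\nabla\btau\|_0^2$, which are exactly the remaining parts of $\tri(v,\btau)\tri_h^2$ and $\tri(v,\btau)\tri_0^2$ respectively. (Note that the statement's right-hand side $\tri(w,\brho)\tri^2$ is a typographical slip for $\tri(v,\btau)\tri^2$.) This yields equality, not just an inequality, so there is nothing subtle.

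For continuity, the strategy is a two-layer Cauchy--Schwarz. First apply the $L^2(\Omega)$ Cauchy--Schwarz inequality to the common term, giving
\[
(\brho-\nabla w,\btau-\nabla v) \;\leq\; \|\brho-\nabla w\|_0\,\|\btau-\nabla v\|_0.
\]
For the second term in $a_0$, another application of $L^2$ Cauchy--Schwarz yields $\|A\ratio\nabla\brho\|_0\,\|A\ratio\nabla\btau\|_0$. For the second term in $a_h$, I would absorb the weights into the factors as $h_K^2(\cdot,\cdot)_K = (h_K A\ratio\nabla\brho,\,h_K A\ratio\nabla\btau)_K$, apply Cauchy--Schwarz element-wise, and then the discrete Cauchy--Schwarz across $K\in\cT$ to obtain $\bigl(\sum_K h_K^2\|A\ratio\nabla\brho\|_{0,K}^2\bigr)^{1/2}\bigl(\sum_K h_K^2\|A\ratio\nabla\btau\|_{0,K}^2\bigr)^{1/2}$.

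Finally, combining the two resulting product bounds by the elementary inequality $ab+cd\leq \sqrt{a^2+c^2}\sqrt{b^2+d^2}$ regroups the factors exactly into $\tri(w,\brho)\tri\cdot\tri(v,\btau)\tri$ in each of the two norm versions. I do not anticipate any genuine obstacle here: continuity and coercivity are routine once the norms and bilinear forms are matched term-by-term. The only point requiring a moment's care is keeping the mesh weights $h_K^2$ on the correct side and ensuring the element-wise Cauchy--Schwarz is applied before the sum over $K$, so that the weights land symmetrically as $h_K$ and $h_K$.
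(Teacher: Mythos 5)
Your proof is correct and is precisely the ``simple computation'' the paper invokes without writing out: coercivity holds as an exact equality upon substituting $(w,\brho)=(v,\btau)$, and continuity follows from element-wise and discrete Cauchy--Schwarz applied term by term (you are also right that $\tri(w,\brho)\tri^2$ in the statement is a typo for $\tri(v,\btau)\tri^2$). No gaps; this matches the paper's intended argument.
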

The lemma can be easily proved by a simple computation.

\begin{thm} \label{EU1}
Assume that $f\in L^2(\O)$, the coefficient matrix $A$ and the domain $\O$ are nice enough such that Assumption \ref{assump_EU} is true, then the least-squares problem (\ref{LS}) has a unique solution $(u,\bsigma) \in H^1_0(\O)\times Q$.
\end{thm}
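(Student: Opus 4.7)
My plan is to exhibit the minimizer explicitly and observe that the infimum of the nonnegative functional $\cJ$ is in fact zero, attained precisely at the pair built from the classical PDE solution; existence then follows directly from Assumption \ref{assump_EU}, and uniqueness from the same assumption (or, equivalently, from Lemma \ref{norm1}). Concretely, let $u\in V\subset H^1_0(\O)$ denote the unique solution of $-A\ratio D^2 u = f$ guaranteed by Assumption \ref{assump_EU}, and set $\bsigma := \nabla u$. Then $\bsigma \in L^2(\O)^d$, and
\[
A\ratio \nabla \bsigma \;=\; A\ratio D^2 u \;=\; -f \;\in\; L^2(\O),
\]
so $\bsigma \in Q$. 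Both residuals vanish identically, $\bsigma-\nabla u\equiv 0$ and $f+A\ratio \nabla\bsigma\equiv 0$, which forces $\cJ_h(u,\bsigma;f) = \cJ_0(u,\bsigma;f) = 0$. Since $\cJ\ge 0$ everywhere on $H^1_0(\O)\times Q$, the pair $(u,\bsigma)$ attains the infimum in (\ref{LS}).

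For uniqueness, any minimizer $(v,\btau)$ must also give $\cJ(v,\btau;f)=0$, so each squared summand vanishes: $\btau=\nabla v$ a.e.\ and $-A\ratio D^2 v = f$ in $L^2(\O)$. Hence $v\in V$, and Assumption \ref{assump_EU} identifies $v=u$, whence $\btau = \nabla v = \bsigma$. An alternative route is to subtract the Euler-Lagrange equations (\ref{LS_EL_h}) or (\ref{LS_EL_L2}) for two putative minimizers and test with the difference $(w,\brho)$, obtaining $a((w,\brho),(w,\brho)) = \tri(w,\brho)\tri^2 = 0$, and then invoking Lemma \ref{norm1} to conclude $(w,\brho) = 0$.

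There is no real obstacle in this direct approach, since the first-order system exactly reproduces (\ref{eq_nondiv}) with no slack, making the theorem essentially a corollary of the PDE well-posedness. The only subtlety worth flagging is what one would confront by attempting Lax-Milgram on the Euler-Lagrange form instead: that route requires verifying completeness of $H^1_0(\O)\times Q$ under $\tri\cdot\tri$, which for merely $L^\infty$ coefficients is delicate because the map $(v,\btau)\mapsto (A\ratio\nabla\btau,\,\btau-\nabla v)$ need not have closed range in $L^2\times L^2$, and a direct norm equivalence with the natural Hilbert norm on $H^1_0(\O)\times Q$ is not obvious. Exhibiting the minimizer from Assumption \ref{assump_EU} sidesteps all of this.
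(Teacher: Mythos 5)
Your proposal is correct and follows essentially the same route as the paper's proof: existence by exhibiting the pair $(u,\nabla u)$ built from the PDE solution of Assumption \ref{assump_EU} as a minimizer with $\cJ=0$, and uniqueness via the fact that $\tri(\cdot,\cdot)\tri$ is a norm (Lemma \ref{norm1}, which itself rests on the same assumption, so your two uniqueness routes coincide). The extra detail you supply---verifying $\bsigma\in Q$ and flagging why a Lax--Milgram route would be delicate---is consistent with, and slightly more explicit than, the paper's argument.
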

\begin{proof}
To prove the existence, for $f\in L^2(\O)$, by Assumption \ref{assump_EU}, there exists a unique $u\in H_0^1(\O)\subset V$ solving the equation. Let $\bsigma =\nabla u$, it is easy to that $\bsigma \in Q$ and $A:\nabla \bsigma +f =0$, thus the least-squares functional \eqref{LS} has a minimizer $(u,\bsigma)\in H_0^1(\O)\times Q$ with minimum value zero. The minimizer $(u,\bsigma) \in H_0^1(\O)\times Q$ is then the solution of the least-squares problem (\ref{LS}) and its corresponding Euler-Lagrange equation. The uniqueness is a simple consequence of the fact $\tri (v,\btau) \tri$ is a norm.
\end{proof}

\begin{rem}
The above proof can be easily generalize to the case that the Drichelet boundary condition is not homogeneous.

The above argument to show the existence and uniqueness of the least-squares formulation is useful when the existences and uniqueness of the PDE is obtained from various non-variational techniques. A similar argument is used to prove the stability of least-squares formulations for the linear transport equation in \cite{LZ:18}.
\end{rem}

\begin{rem}
Here, the assumption of the coefficient $A$ is quite weak. The matrix $A$ does not need to be in $C^{0}(\O)^{d\times d}$, it can even be degenerate as long as  Assumption \ref{assump_EU} still holds.
\end{rem}

\begin{rem}
For the elliptic equation in divergence form \eqref{eqdiv}, traditionally there are two forms on least-squares functionals \cite{Cai:94,BLP:97}:
\begin{eqnarray*}
\cL_0(v,\btau; f) &:= & \|\gradt \btau -f \|_{0}^2 +\|A^{-1/2}\btau+A^{1/2}\nabla v \|_{0}^2, \quad \forall (v,\btau) \in H^1_0(\O)\times H(\divvr;\O),\\
\cL_{-1}(v,\btau; f) &:= & 
\|\gradt \btau -f \|_{-1}^2 +\|A^{-1/2}\btau+A^{1/2}\nabla v \|_{0}^2, \quad \forall (v,\btau) \in H^1_0(\O)\times H(\divvr;\O).
\end{eqnarray*}
A norm equivalence can be proved: there exists positive constant $C_1$ and $C_2$, such that for $(\btau,v)\in H(\divvr;\O) \times H^1(\O)$:
$$
C_1 (\|\btau\|_{H(\divvr)}^2 + \|v\|_1^2) \leq \cL_0(v,\btau; 0) \leq C_2 (\|\btau\|_{H(\divvr)}^2 + \|v\|_1^2)
$$
and
$$
C_1 (\|\btau\|_{0}^2 + \|v\|_1^2) \leq \cL_{-1}(v,\btau; 0) \leq C_2 (\|\btau\|_{0}^2 + \|v\|_1^2).
$$
Our $L^2$ least-squares functional essentially is a modification of $\cL_0$. But due to the lack of the differentiability of $A$, we cannot prove the following norm equivalence:
\beq \label{equvalience}
C_1 (\|\btau\|_0^2 + \|A\ratio \nabla \btau\|_0^2  + \|v\|_1^2) \leq \cJ_0(v,\btau; 0) \leq C_2 (\|\btau\|_0^2 + \|A\ratio \nabla \btau\|_0^2  + \|v\|_1^2).
\eeq
On the other hand, if $A$ is smooth enough, then \eqref{eq_nondiv} can be written in the divergence form as \eqref{eq_div}, we do can prove \eqref{equvalience} using the same technique for the equation divergence form with similar arguments in \cite{CFZ:15,Ku:07}.

However, for our least-squares functionals, we do have a one-sided bound, which can be easily proved for $(v,\btau) \in H^1_0(\O)\times Q$:
\begin{eqnarray}
C \tri (v,\btau)\tri_0 &\leq & \|A\ratio \nabla \btau\|_0 + \|\btau\|_0 + \|\nabla v\|_0, \\[2mm]
C \tri (v,\btau)\tri_h &\leq & \sum_{K\in\cT}h_K\|A\ratio \nabla \btau\|_{0,K} + \|\btau\|_0 + \|\nabla v\|_0.
\end{eqnarray}
We do not use the minus-$H^1$ norm version in this paper due to its complicated discrete implementation, in stead, we choose a weighted mesh-dependent version to simplify the implementation and keep an optimal order of convergence. 
\end{rem}

\section{Least-Squares Finite Element Methods}
\setcounter{equation}{0}
In this section, LSFEMs based on the least-squares minimization problems are developed. The a priori and a posteriori error estimates with respect to the least-squares norms $\tri (\cdot,\cdot)\tri$ are derived.

\subsection{Least-squares finite element methods}
%Let $\cT = \{K\}$ be a triangulation of $\O$ using simplicial elements. The mesh $\cT$ is assumed to be shape-regular, but it does not to be quasi-uniform. 
For an element $K\in \cT$ and an integer $k\geq 0$, let $P_k(K)$ the space of polynomials with degrees less than or equal to $k$. Define the finite element spaces $S_k$ and $S_{k,0}$, $k\geq 1$, as follows:
$$
S_k  :=\{ v \in H^1(\O) \colon
			v|_K \in P_k(K)\,\, \forall \, K\in \cT\}
\quad\mbox{and}\quad
S_{k,0} := S_k \cap H_0^1(\O).
$$
We define the LSFEMs are follows. 

\noindent ({\bf Weighted-LSFEM Problem})
Seek $(u_h, \bsigma_h)\in S_{k,0}\times S_{k-1}^d$, $k\geq 2$, such that
\beq \label{LSFEM_h}
\cJ_h(u_h,\bsigma_h;f) = \inf_{(v,\btau) \in S_{k,0}\times S_{k-1}^d} \cJ_h(v,\btau; f).
\eeq
Or equivalently, find $(u_h, \bsigma_h)\in S_{k,0}\times S_{k-1}^d$, $k\geq 2$, such that
\beq \label{LSFEM_EL_h}
a_h((u_h, \bsigma_h),(v_h, \btau_h)) = -\sum_{K\in\cT} h_K^2 (f, A\ratio  \nabla \btau_h), \quad \forall (v_h,\btau_h) \in S_{k,0}\times S_{k-1}^d.
\eeq

\noindent ({\bf $L^2$-LSFEM Problem})
Seek $(u_h, \bsigma_h)\in S_{1,0}\times S_{1}^d$, such that
\beq \label{LSFEM_L2}
\cJ_0(u_h,\bsigma_h;f) = \inf_{(v,\btau) \in S_{1,0}\times S_{1}^d} \cJ_0(v,\btau; f).
\eeq
Or equivalently, find $(u_h, \bsigma_h)\in S_{1,0}\times S_{1}^d$, such that
\beq \label{LSFEM_EL_L2}
a_0((u_h, \bsigma_h),(v_h, \btau_h)) = -(f, A\ratio  \nabla \btau_h), \quad \forall (v,\btau_h) \in S_{1,0}\times S_{1}^d.
\eeq

The existence and uniqueness of the LSFEM problems are obvious from the facts that
$$
 S_{k}^d \subset H^1(\O)^d  =\{\btau \in L^2(\O)^d : \nabla \btau \in L^2(\O)^d\} \subset Q
$$ and $S_{k,0}\subset H^1_0(\O)$.

\begin{rem}
For the approximation space of $u\in H^1(\O)$, the $H^1$-conforming space is an obvious good choice. For the approximation space of $Q$, we use $S^d  \subset H^1(\O)^d \subset Q$. The space $H^1(\O)^d$ is more restrictive than $Q$, but it has a simple conforming finite element space. The $H^1$-conforming space is not the best choice if further information of $A$ is known. For example, in the case of the equation of divergence form, it is well known that $\nabla u \in H(\mbox{\curl};\O)$ and $A\nabla u\in H(\divvr;\O)$. Similarly for the non-divergence equation, for problems with low regularity and possible discontinuous coefficients, $\nabla u$ may not be in $H^1(\O)^d$, then in some extreme case we will have even though $u_h$ is identical to the exact solution $u$, $\bsigma_h$ cannot equal to $\nabla u$ at the same time, since $\nabla u$ may not be in $H^1(\O)^d$. Such cases will pose problems of a posteriori error estimation and adaptive mesh refinements, see the discussions in \cite{CZ:09} and \cite{CHZ:17} for the failure of the classical Zienkiewicz-Zhu error estimator, which recovers $\nabla u$ in $S^d$ to construct the a posteriori error estimator. Even though we do have this concern, due to the lack of information of $A$ and to keep the method suitable for a general coefficient matrix $A$, the $C^0$-conforming space to approximate $\bsigma = \nabla u$ is still a reasonable choice.  

It is also worth to mention that, we also do not know whether $A\nabla u \in H(\divvr;\O)$ or not for the non-divergence equation. In the the formulations suggested in \cite{FHN:17} and \cite{MY:17}, the normal jump of $A\nabla u$ are used. Thus, these formulations have a similar possible inconsistency as our methods when applied to problems with less smoothness, where the exact jump of $A\nabla u \cdot \bn$ across element interfaces may not be zero for an exact $u$, and have a possibility to introduce an extra error.
\end{rem}

\subsection{A priori error estimates}
\begin{thm} \label{thm_bestapp}
(Cea's lemma type of result)
Let $(u, \bsigma)$ be the solution of least-squares variational problem \eqref{LS}. Let $(u_h, \bsigma_h)\in S_{k,0}\times S_{k-1}^d$, $k\geq 2$, be the solution of the weight-LSFEM problem \eqref{LSFEM_h},  the following best approximation result holds:
\beq
\tri (u-u_h, \bsigma-\bsigma_h)\tri_h \leq \inf _{(v_h,\btau_h) \in  S_{k,0}\times S_{k-1}^d}
\tri(u-v_h, \bsigma-\bsigma_h)\tri_h.
\eeq
Let $(u_h, \bsigma_h)\in S_{1,0}\times S_{1}^d$ be the solution of the $L^2$-LSFEM problem \eqref{LSFEM_L2}, the following best approximation result holds:
\beq
\tri (u-u_h, \bsigma-\bsigma_h)\tri_0 \leq \inf _{(v_h,\btau_h) \in  S_{1,0}\times S_{1}^d}
\tri(u-v_h, \bsigma-\bsigma_h)\tri_0.
\eeq
\end{thm}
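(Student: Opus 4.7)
The proof is the standard Cea-type argument for least-squares finite element methods, and it rests on two ingredients already established in the excerpt: the conformity of the discrete spaces $S_{k,0}\times S_{k-1}^d \subset H^1_0(\Omega)\times Q$ (and similarly $S_{1,0}\times S_1^d$), and the continuity/coercivity lemma which states that $a((w,\brho),(v,\btau))\le \tri(w,\brho)\tri\,\tri(v,\btau)\tri$ and $a((v,\btau),(v,\btau))=\tri(v,\btau)\tri^2$. Because the coercivity and continuity constants are both exactly $1$, we expect the best approximation bound to hold with constant $1$ — a hallmark of LSFEMs — and this is what the statement records.

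The plan is to first derive Galerkin orthogonality. Both the continuous Euler--Lagrange equation \eqref{LS_EL_h} and its discrete counterpart \eqref{LSFEM_EL_h} have the same right-hand side $-\sum_{K\in\cT}h_K^2(f,A\ratio\nabla\btau_h)_K$ when tested against $(v_h,\btau_h)\in S_{k,0}\times S_{k-1}^d$, since this pair is an admissible test function in the continuous problem as well. Subtracting then yields
\[
a_h\bigl((u-u_h,\bsigma-\bsigma_h),(v_h,\btau_h)\bigr)=0\qquad\forall\,(v_h,\btau_h)\in S_{k,0}\times S_{k-1}^d.
\]
The identical argument applied to \eqref{LS_EL_L2} and \eqref{LSFEM_EL_L2} gives the corresponding orthogonality for $a_0$ on $S_{1,0}\times S_1^d$.

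Next, I use the norm identity $a((w,\brho),(w,\brho))=\tri(w,\brho)\tri^2$ with $(w,\brho)=(u-u_h,\bsigma-\bsigma_h)$. For any discrete pair $(v_h,\btau_h)$, write $u-u_h=(u-v_h)+(v_h-u_h)$ and $\bsigma-\bsigma_h=(\bsigma-\btau_h)+(\btau_h-\bsigma_h)$. The contribution coming from $(v_h-u_h,\btau_h-\bsigma_h)$ in the second slot vanishes by Galerkin orthogonality, leaving
\[
\tri(u-u_h,\bsigma-\bsigma_h)\tri^2
= a\bigl((u-u_h,\bsigma-\bsigma_h),(u-v_h,\bsigma-\btau_h)\bigr).
\]
Applying the continuity bound on the right and dividing by $\tri(u-u_h,\bsigma-\bsigma_h)\tri$ (the trivial case where this vanishes is handled directly) yields $\tri(u-u_h,\bsigma-\bsigma_h)\tri \le \tri(u-v_h,\bsigma-\btau_h)\tri$, and taking the infimum over $(v_h,\btau_h)$ gives the claimed best approximation estimate. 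The same calculation with $a_0$ and $\tri\cdot\tri_0$ covers the $L^2$-LSFEM case.

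There is no real obstacle here: the entire argument is mechanical once one observes that the discrete space is conforming (so the continuous Euler--Lagrange equation restricts to give the correct orthogonality) and that the bilinear form equals the squared energy norm on the diagonal. The only point requiring a brief check is that the right-hand side of \eqref{LS_EL_h} agrees with that of \eqref{LSFEM_EL_h} on the discrete test space, which is immediate from the formulas.
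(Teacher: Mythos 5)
Your argument is correct and is precisely the ``standard'' proof the paper invokes without writing out: conformity of $S_{k,0}\times S_{k-1}^d$ in $H^1_0(\O)\times Q$ gives Galerkin orthogonality, and since $a$ equals the squared least-squares norm on the diagonal with continuity constant $1$, the best-approximation bound follows with constant $1$. Note only that the infimand in the theorem's displayed estimate should read $\tri(u-v_h,\bsigma-\btau_h)\tri$ (the paper's statement contains a typo, writing $\bsigma-\bsigma_h$), and that is exactly the quantity your decomposition produces.
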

\begin{proof}
The proof of the best approximation result is standard.
\end{proof}

%Define the following piecewise function space on the triangulation $\cT$,
% \begin{align*}
%H^{s}(\cT)&=\{v\in L^2(\O):v|_K\in H^{s_K}(K) \,\ \forall K\in\cT \}, \\
%H^s(\divvr; \cT)&=\{\btau \in (L^2(\O))^d: \btau|_K\in (H^{s_K}(K))^d, \gradt\btau|_K\in H^{s_K}(K)\,\ \forall K\in\cT \}.
%\end{align*}
%with $s$ is a piecewisely defined function, $s|_K=s_K>0$.
\begin{thm} \label{thm_LSFEM_apriori_h}
(A priori error estimate for the weighted-LSFEM)
Assume the solution %$\btau \in  H^r(\O)^d$ and 
$u\in H^{r+1}(\O)$, for some $r\geq 1$, and  $(u_h, \bsigma_h)\in S_{k,0}\times S_{k-1}^d$, $k\geq 2$ be the solution of the weighted LSFEM problem \eqref{LSFEM_h}, then there exists a constant $C>0$ independent of the mesh size $h$, such that
\begin{align}
\tri (u-u_h, \bsigma-\bsigma_h)\tri_h  \leq C  h^{\min(k,r+1)} \|u\|_{\min(k,r+1)}.
\end{align}
\end{thm}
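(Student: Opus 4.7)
The plan is to invoke the Cea-type best-approximation inequality from Theorem~\ref{thm_bestapp}, which reduces the a priori estimate to exhibiting a convenient pair $(v_h,\btau_h)\in S_{k,0}\times S_{k-1}^d$ whose distance to $(u,\bsigma)$ in the $\tri\cdot\tri_h$-norm is of the claimed order. I would take $v_h=\Pi_h^k u\in S_{k,0}$, a standard degree-$k$ $H^1$-conforming interpolant preserving the homogeneous boundary data (e.g.\ a Scott--Zhang interpolant), and $\btau_h=\Pi_h^{k-1}\bsigma\in S_{k-1}^d$, the componentwise degree-$(k-1)$ interpolant of $\bsigma=\nabla u$. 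Both are well defined because $u\in H^{r+1}(\O)\cap H_0^1(\O)$ and $\bsigma\in H^r(\O)^d$ with $r\geq 1$.

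I would then expand the squared weighted norm on the interpolation error as
\[
\tri(u-v_h,\bsigma-\btau_h)\tri_h^2 = \sum_{K\in\cT} h_K^2 \|A\ratio \nabla(\bsigma-\btau_h)\|_{0,K}^2 + \|(\bsigma-\btau_h)-\nabla(u-v_h)\|_0^2
\]
and bound the two groups separately. For the weighted Hessian-type term, the pointwise bound $|A\ratio\nabla\btau|\leq\sqrt{d}\,\Lambda\,|\nabla\btau|$ reduces matters to controlling $h_K^2|\bsigma-\Pi_h^{k-1}\bsigma|_{1,K}^2$, for which the standard local interpolation estimate
\[
|\bsigma-\Pi_h^{k-1}\bsigma|_{1,K}\leq C\,h_K^{s-1}\,|\bsigma|_{s,\omega_K},\qquad 1\leq s\leq\min(k,r),
\]
is applicable; summing over $K$ and passing from $\bsigma=\nabla u$ to $u$ via $|\bsigma|_s\leq\|u\|_{s+1}$ yields the desired order. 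For the $L^2$-type residual the triangle inequality gives
\[
\|(\bsigma-\btau_h)-\nabla(u-v_h)\|_0 \leq \|\bsigma-\Pi_h^{k-1}\bsigma\|_0 + \|\nabla(u-\Pi_h^k u)\|_0,
\]
and each piece is controlled by standard $L^2$- and $H^1$-type interpolation estimates in $S_{k-1}^d$ and $S_{k,0}$, respectively.

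Adding the two contributions and taking a square root produces the claimed bound. The main obstacle is purely a careful bookkeeping step: one has to verify that the factor $h_K$ built into the first term of $\cJ_h$ precisely compensates for the loss of one polynomial degree in the space $S_{k-1}^d$ used to approximate $\bsigma$, so that both groups saturate at the common exponent $\min(k,r+1)$ of $h$ and at the same Sobolev norm $\|u\|_{\min(k,r+1)}$ uniformly in both regimes $r+1\leq k$ and $r+1>k$. Conceptually this explains why the weight was inserted in $\cJ_h$ in the first place; beyond this index matching, no ingredient beyond the Bramble--Hilbert/Deny--Lions interpolation framework is required.
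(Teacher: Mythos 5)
Your route is exactly the paper's: the paper's entire proof consists of the one-sided bound \eqref{local_ineq_h}, namely $C\tri(v,\btau)\tri_h \leq \sum_{K\in\cT} h_K\|A\ratio\nabla\btau\|_{0,K}+\|\nabla v\|_0+\|\btau\|_0$, followed by the Cea-type best-approximation result of Theorem~\ref{thm_bestapp} and an appeal to ``the approximation properties of functions in $S_k$.'' Your choice of interpolants and term-by-term splitting is precisely that sketch written out in full, so methodologically there is nothing separating the two arguments.

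However, the step you dismiss as ``purely a careful bookkeeping step'' is exactly where the arithmetic does not close, and you assert it rather than verify it. Your own quoted estimates cap the weighted group at $h_K\cdot h_K^{s-1}=h_K^{s}$ with $s\leq\min(k,r)$, and the unweighted group at $\|\bsigma-\Pi_h^{k-1}\bsigma\|_0+\|\nabla(u-\Pi_h^{k}u)\|_0\leq C\,h^{\min(k,r)}\|u\|_{\min(k,r)+1}$; in particular the gradient term $\|\nabla(u-v_h)\|_0$ cannot beat order $h^{r}$ when $u\in H^{r+1}(\O)$, no matter how large $k$ is. What this argument therefore delivers is $\tri(u-u_h,\bsigma-\bsigma_h)\tri_h\leq C\,h^{\min(k,r)}\|u\|_{\min(k,r)+1}$: the exponent $\min(k,r+1)$ is attained only in the regime $r\geq k$, and even there the Sobolev index must be $k+1$ rather than $\min(k,r+1)=k$ (for instance, with $k=2$ and $r=1$ the stated bound $h^{2}\|u\|_{2}$ is not reachable by interpolation of an $H^{2}$ function). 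Your conceptual point about the weight is correct --- the factor $h_K$ lifts the Hessian-type group from $h^{k-1}$ to $h^{k}$ so that it matches the $L^2$ group for smooth solutions --- but it does nothing to lift the cap $h^{\min(k,r)}$ imposed by $\|\nabla(u-v_h)\|_0$, so the claimed saturation at $\min(k,r+1)$ ``uniformly in both regimes'' is false as written. To be fair, the paper's two-line proof glosses over the identical index computation, so the off-by-one is inherited from the theorem's statement itself; but a blind proof should either derive the corrected pairing $h^{\min(k,r)}\|u\|_{\min(k,r)+1}$ or explicitly note that the stated rate is justified by this argument only for $r\geq k$, with the norm $\|u\|_{k+1}$ on the right-hand side.
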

\begin{proof}
It is easy to see that
\beq \label{local_ineq_h}
C\tri (v,\btau) \tri_h \leq  \sum_{K\in\cT} h_K\|A\ratio  \nabla \btau\|_{0,K} +\|\nabla v\|_0 + \|\btau\|_0. 
\eeq
Then the a priori result is a direct consequence of Theorem \ref{thm_bestapp} and the approximation properties of functions in $S_k$.
\end{proof}

\begin{thm} \label{thm_LSFEM_apriori_L2}
(A priori error estimate for the $L^2$-LSFEM)
Assume the solution $u\in H^{3}(\O)$ and  $(u_h, \bsigma_h)\in S_{1,0}\times S_{1}^d$  be the solution of the $L^2$-LSFEM problem \eqref{LSFEM_L2}, then there exists a constant $C>0$ independent of the mesh size $h$, such that
\begin{align}
\tri (u-u_h, \bsigma-\bsigma_h)\tri_0  \leq C  h \|u\|_{3}.
\end{align}
\end{thm}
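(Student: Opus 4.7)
The plan is to combine the Céa-type best-approximation result (Theorem~\ref{thm_bestapp}, $L^2$-version) with standard Lagrange interpolation estimates applied componentwise to $u$ and to $\bsigma=\nabla u$. First I would let $I_h$ denote the scalar or vectorial continuous piecewise-linear Lagrange interpolant and set $v_h=I_h u\in S_{1,0}$ and $\btau_h=I_h\bsigma\in S_1^d$. These interpolants are well defined since $u\in H^3(\Omega)$ implies $\bsigma=\nabla u\in H^2(\Omega)^d$, and for $d\le 3$ both $u$ and $\bsigma$ have continuous representatives.

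Next, applying Theorem~\ref{thm_bestapp} (with the obvious reading of the right-hand side as $\tri(u-v_h,\bsigma-\btau_h)\tri_0$) gives
\begin{equation*}
\tri (u-u_h,\bsigma-\bsigma_h)\tri_0 \;\le\; \tri (u-I_h u,\bsigma-I_h\bsigma)\tri_0.
\end{equation*}
Expanding the right-hand side by the definition of $\tri\cdot\tri_0$,
\begin{equation*}
\tri (u-I_h u,\bsigma-I_h\bsigma)\tri_0^2
= \|A\ratio\nabla(\bsigma-I_h\bsigma)\|_0^2
+ \|(\bsigma-I_h\bsigma)-\nabla(u-I_h u)\|_0^2,
\end{equation*}
I would bound the two terms separately. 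For the first, use $|A|\le \Lambda$ pointwise and the Brenner–Scott interpolation estimate $|\bsigma-I_h\bsigma|_1\le C h\,|\bsigma|_2\le Ch\|u\|_3$, yielding $\|A\ratio\nabla(\bsigma-I_h\bsigma)\|_0\le C\Lambda\,h\,\|u\|_3$. For the second, apply the triangle inequality and the $P_1$ interpolation estimates
\begin{equation*}
\|\bsigma-I_h\bsigma\|_0\le Ch^2\|\bsigma\|_2\le Ch^2\|u\|_3,
\qquad
\|\nabla(u-I_h u)\|_0\le Ch\,\|u\|_2\le Ch\,\|u\|_3,
\end{equation*}
to conclude $\|(\bsigma-I_h\bsigma)-\nabla(u-I_h u)\|_0\le Ch\,\|u\|_3$. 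Combining the two pieces yields the claim.

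The analysis is essentially routine, so the only real conceptual point — rather than a true obstacle — is that the rate is capped at $O(h)$ rather than $O(h^2)$ because the term $\|\nabla(u-I_h u)\|_0$ cannot do better than $O(h)$ for piecewise-linear approximation, regardless of the extra smoothness of $u$. This is also why the statement requires only $u\in H^3(\Omega)$ (needed so that $\bsigma\in H^2$ and the interpolation estimate $|\bsigma-I_h\bsigma|_1\le Ch|\bsigma|_2$ is available), not higher regularity. A sharper $L^2$-estimate for $u-u_h$ beyond the mixed least-squares norm would require a duality argument, but that is outside the scope of this theorem.
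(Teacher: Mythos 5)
Your proposal is correct and takes essentially the same route as the paper: the C\'ea-type bound of Theorem~\ref{thm_bestapp} applied to the Lagrange interpolants of $u$ and $\bsigma=\nabla u$, followed by the triangle inequality (the paper's inequality \eqref{local_ineq_L2}) and standard $P_1$ interpolation estimates, using $u\in H^3$ exactly so that $\bsigma\in H^2$ gives the $O(h)$ bound on $\|A\ratio\nabla(\bsigma-\btau_h)\|_0$. Your reading of the typo in the right-hand side of Theorem~\ref{thm_bestapp} (replacing $\bsigma-\bsigma_h$ by $\bsigma-\btau_h$) is the intended one, and no step is missing.
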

\begin{proof}
We have
\beq \label{local_ineq_L2}
C\tri (v,\btau) \tri_0 \leq  \|A\ratio  \nabla \btau\|_{0} +\|\nabla v\|_0 + \|\btau\|_0. 
\eeq
Then let $v_h$ be the interpolation of $u$ in $S_{1,0}$ and $\btau_h$ be the interpolation of $\nabla u$ in $S_{1}^d$, by the approximation properties of $S_1$ and the fact that $\bsigma = \nabla u$, we have 
\begin{eqnarray*}
C\tri (u-u_h,\bsigma-\bsigma_h) \tri_0 &\leq & \|A\ratio  \nabla (\bsigma-\btau_h)\|_{0} + \|\bsigma-\btau_h\|_0 +\|\nabla (u-v_h)\|_0 \\
	&\leq &Ch\|\nabla u\|_2 + Ch\|u\|_2 +Ch^2\|\nabla u\|_2 \leq C h\|u\|_3.
\end{eqnarray*}
The theorem is proved.
\end{proof}

\begin{rem}
From the a priori estimates, we can clearly see that with respect the least-squares norm,  the weighted version is optimal when the regularity is high and a suitable high order finite element pair is used. For the $L^2$-LSFEM, the optimal interpolation order for the $L^2$-norm of $\bsigma$ is $2$, which is one order high that the other two components, and thus sub-optimal, thus high order approximations of the $L^2$-LSFEM is not suggested. But the $L^2$-LSFEM can use the simplest linear conforming finite element space for $u$, and has reasonable approximation orders, for example, we will find the $L^2$-estimates of $u-u_h$ is the same order as the the $S_{2,0}\times S_{1}^d$ weighted-LSFEM if assuming enough smoothness of the coefficient $A$ and the solution, see Theorems \ref{L2_h} and  Theorems \ref{L2_0} and our numerical tests. 
\end{rem}

\begin{rem}
In the a priori error estimates, in order to get a convergence order, we assume that the regularity of $u$ is at least $H^2$ or $H^3$. To discuss the convergence without a high regularity, we first assume that the coefficient matrix $A$ is nice enough such that for any $\epsilon >0$, there exists a $\bsigma^{\epsilon} \in C^{\infty}(\O)^d$, such that 
\beq \label{eps}
\|\bsigma -\bsigma^{\epsilon}\|_0 \leq \epsilon \quad \mbox{and}\quad \|f-A\ratio  \nabla \bsigma^{\epsilon}\|_0 \leq \epsilon.
\eeq
Note that this condition is weaker than $\|\bsigma - \bsigma^{\epsilon}\|_1 \leq \epsilon$, since $\nabla \bsigma$ may not be in $L^2$. If the assumption \eqref{eps} is true, then a convergence result is easy to prove for $u \in H^1_0(\O)$ only. 

For the special case that $A|_K$, the restriction of the coefficient matrix on each $K\in\cT$, is a constant matrix, then $A\ratio \nabla \btau_h$ contains a piecewise constant on each element $K$, we have 
$$
\|f-A\ratio \nabla \btau_h \|_{0,K} \leq C h_K^r \|f\|_{r,K}, \quad \mbox{for  } 0<r\leq 1.
$$
This can be used to establish the convergence for a solution with a low regularity.
   
For the standard LSFEM for the divergence, such problems do not exist since Raviart-Thomas element is used to approximation $\bsigma_d = -A\nabla u$, and the regularity requirement is on $f =\gradt \bsigma_d$, which is weaker than the requirement on $D^2 u$.  Again, this is due to the special structure of the divergence equation, which is not available for the general non-divergence equation.    
\end{rem}

\subsection{A posteriori error estimates}
The least-squares functional can be used to define the following fully computable a posteriori  local indicator and global error estimator.

\subsubsection{Weighted-LSFEM}
Let $(u, \bsigma)$ be the solution of least-squares variational problem \eqref{LS}, and $(u_h, \bsigma_h)\in S_{k,0}\times S_{k-1}^d$, $k\geq 2$ be the solution of the weighted-LSFEM problem \eqref{LSFEM_h}, then define:
\begin{eqnarray*}
\eta_{h,K}^2 &:=&  h_K^2\|f+A\ratio  \nabla \bsigma_h\|_{0,K}^2 +\|\bsigma_h-\nabla u_h\|_{0,K}^2,
\quad \forall K\in \cT, \\[2mm]
\mbox{and} \quad \eta_h^2 &:=& \sum_{K\in\cT}\eta_{h,K}^2 =  \sum_{K\in\cT}h_K^2\|f+A\ratio  \nabla \bsigma_h\|_{0,K}^2 +\|\bsigma_h-\nabla u_h\|_0^2.
\end{eqnarray*}

\begin{thm} \label{poster_h}
The a posteriori error estimator $\eta_h$ is exact with respect to the least-squares norm $\tri \cdot\tri_h$:
$$
\eta_h = \tri (u-u_h, \bsigma-\bsigma_h) \tri_h \quad \mbox{and} \quad \eta_{h,K} = \tri (u-u_h, \bsigma-\bsigma_h) \tri_{h,K}.
$$
The following local efficiency bound is also true with a constant $C>0$ independent of the mesh size $h$:
\beq \label{eff_h}
C \eta_{h,K} \leq h_K\|A :\nabla(\bsigma-\bsigma_h)\|_{0,K} + \|\bsigma-\bsigma_h\|_{0,K} + \|\nabla(u-u_h)\|_{0,K}, \quad \forall K\in\cT.
\eeq
\end{thm}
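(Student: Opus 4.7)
The approach rests on rewriting the residuals appearing in $\eta_{h,K}^2$ in terms of the true errors, which becomes easy once we exploit that the exact pair $(u,\bsigma)$ satisfies the first-order system \eqref{1stordersys} pointwise in the $L^2$ sense.

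First I would establish the exactness identity. Since $\bsigma = \nabla u$ and $A\ratio \nabla \bsigma = -f$ almost everywhere, I would substitute to obtain the two pointwise identities
\begin{align*}
f + A\ratio \nabla \bsigma_h &= -A\ratio \nabla(\bsigma-\bsigma_h), \\
\bsigma_h - \nabla u_h &= -\bigl((\bsigma-\bsigma_h) - \nabla(u-u_h)\bigr).
\end{align*}
Squaring, integrating over $K$, and weighting the first by $h_K^2$ gives termwise equality of $\eta_{h,K}^2$ with $\tri(u-u_h,\bsigma-\bsigma_h)\tri_{h,K}^2$. Summing over $K\in\cT$ yields the global identity $\eta_h = \tri(u-u_h,\bsigma-\bsigma_h)\tri_h$. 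This step is essentially free because the least-squares functional is defined precisely as the residual norm of the first-order system, so the a posteriori estimator coincides with the error in the energy norm; no Galerkin orthogonality or bubble-function machinery is needed.

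For the local efficiency bound, I would start from the identity just proved and apply the triangle inequality to the second residual:
\[
\|(\bsigma-\bsigma_h) - \nabla(u-u_h)\|_{0,K}
\;\le\; \|\bsigma-\bsigma_h\|_{0,K} + \|\nabla(u-u_h)\|_{0,K}.
\]
Combining with the elementary estimate $\sqrt{a^2+b^2}\le a+b$ applied to the two contributions in $\eta_{h,K}^2$ produces
\[
\eta_{h,K} \;\le\; h_K\|A\ratio\nabla(\bsigma-\bsigma_h)\|_{0,K} + \|\bsigma-\bsigma_h\|_{0,K} + \|\nabla(u-u_h)\|_{0,K},
\]
which is exactly \eqref{eff_h} with a constant $C$ that can be taken equal to $1$.

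There is no genuine obstacle here: both statements are consequences of the variational formulation being constructed from the residual itself, so the upper bound (reliability) is an equality and the lower bound (efficiency) reduces to a triangle inequality. The only care needed is to keep track of the signs when substituting $f = -A\ratio \nabla\bsigma$ and $\bsigma = \nabla u$, and to note that, unlike most a posteriori theories, no data oscillation term appears because $f$ enters the estimator directly as part of the first residual.
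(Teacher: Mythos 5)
Your proof is correct and follows essentially the same route as the paper: exactness comes from substituting the exact first-order system $\bsigma = \nabla u$, $f = -A\ratio\nabla\bsigma$ into the residuals, and local efficiency is the triangle inequality applied to the resulting identity (the paper phrases this as a local version of its one-sided bound \eqref{local_ineq_h}, which amounts to the same computation). Your observation that the constant can be taken as $C=1$ is a harmless sharpening of the paper's unspecified constant.
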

\begin{proof}
Using $\bsigma -\nabla u =0$ and $f=-A\ratio  \nabla \bsigma$, we obtain,
\begin{eqnarray*}
\eta_h^2 &=&
\sum_{K\in\cT}h_K^2\|A\ratio  \nabla (\bsigma-\bsigma_h)\|_{0,K}^2 +\|\bsigma_h - \bsigma -\nabla (u_h -u)\|_0^2
=\tri (u-u_h, \bsigma-\bsigma_h) \tri_h^2.
\end{eqnarray*}
The proof of the local exactness is identical.

The locally efficiency \eqref{eff_h} is a direct result of a local version of  \eqref{local_ineq_h}.
\end{proof}

\subsubsection{$L^2$-LSFEM}
For the $L^2$-LSFEM, the a posteriori error estimator can be defined accordingly, and the corresponding results can be proved in a similar fashion.

Let $(u, \bsigma)$ be the solution of least-squares variational problem \eqref{LS}, and $(u_h, \bsigma_h)\in S_{1,0}\times S_{1}^d$ be the solution of the $L^2$-LSFEM problem \eqref{LSFEM_L2}, define:
$$
\eta_{0,K}^2 :=  \|f+A\ratio  \nabla \bsigma_h\|_{0,K}^2 +\|\bsigma_h-\nabla u_h\|_{0,K}^2,
\quad \forall K\in \cT,
$$
and
$$
\eta_0^2 := \sum_{K\in\cT}\eta_{0,K}^2 =  \|f+A\ratio  \nabla \bsigma_h\|_{0,K}^2 +\|\bsigma_h-\nabla u_h\|_0^2.
$$
\begin{thm}
The a posteriori error estimator $\eta_0$ is exact with respect to the least-squares norm $\tri \cdot\tri_0$:
$$
\eta_0 = \tri (u-u_h, \bsigma-\bsigma_h) \tri_0 \quad \mbox{and} \quad \eta_{0,K} = \tri (u-u_h, \bsigma-\bsigma_h) \tri_{0,K}.
$$
The following local efficiency bound is also true with $C>0$ independent of the mesh size $h$:
\beq \label{eff_0}
C \eta_{0,K} \leq \|A :\nabla(\bsigma-\bsigma_h)\|_{0,K} + \|\bsigma-\bsigma_h\|_{0,K} + \|\nabla(u-u_h)\|_{0,K}, \quad \forall K\in\cT.
\eeq
\end{thm}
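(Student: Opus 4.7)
The plan is to mirror, almost verbatim, the proof of Theorem \ref{poster_h} (the weighted-LSFEM case), since the only structural difference is the removal of the mesh-size factor $h_K^2$ in front of the equation residual. The proof has two independent parts: exactness (global and local) and local efficiency.

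For exactness, the key observation is that the exact solution $(u,\bsigma)$ of \eqref{LS} satisfies the first-order system \eqref{1stordersys} pointwise in the $L^2$-sense, i.e.\ $\bsigma-\nabla u = 0$ and $f+A\ratio\nabla\bsigma = 0$. Substituting $f = -A\ratio\nabla\bsigma$ and $\nabla u_h - \bsigma_h = \nabla u_h - \bsigma_h - (\nabla u - \bsigma)$ into the definitions of $\eta_{0,K}^2$ and $\eta_0^2$, I obtain
\begin{equation*}
\eta_{0,K}^2 = \|A\ratio\nabla(\bsigma-\bsigma_h)\|_{0,K}^2 + \|(\bsigma-\bsigma_h) - \nabla(u-u_h)\|_{0,K}^2 = \tri(u-u_h,\bsigma-\bsigma_h)\tri_{0,K}^2,
\end{equation*}
and summing over $K\in\cT$ gives the global identity $\eta_0 = \tri(u-u_h,\bsigma-\bsigma_h)\tri_0$. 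This step is essentially a bookkeeping exercise and requires no auxiliary machinery.

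For the efficiency estimate \eqref{eff_0}, I would invoke the local analog of the one-sided bound already used in the proof of Theorem \ref{thm_LSFEM_apriori_L2}, namely
\begin{equation*}
C\,\tri(v,\btau)\tri_{0,K} \leq \|A\ratio\nabla\btau\|_{0,K} + \|\btau\|_{0,K} + \|\nabla v\|_{0,K},
\end{equation*}
which is an elementwise triangle-inequality consequence of the definition of $\tri\cdot\tri_{0,K}$. Applying this with $v = u-u_h$ and $\btau = \bsigma-\bsigma_h$ and combining with the exactness identity just established yields \eqref{eff_0}.

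I do not foresee a genuine obstacle: exactness follows from the fact that the continuous least-squares functional vanishes at the true solution, so the discrete functional evaluated at $(u_h,\bsigma_h)$ literally equals the least-squares norm of the error, and the one-sided local bound is immediate from the triangle inequality. The only thing to be mildly careful about is to carry out the algebraic manipulation locally on each $K$ (so that the local exactness statement is not merely a consequence of the global one), which is why I would write both the local and global identities together at the start.
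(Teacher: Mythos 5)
Your proposal is correct and follows exactly the paper's route: the paper proves the weighted case (Theorem \ref{poster_h}) by substituting $\bsigma = \nabla u$ and $f = -A\ratio\nabla\bsigma$ into the estimator to obtain exactness, and derives local efficiency from a local version of the one-sided bound \eqref{local_ineq_L2}, then states that the $L^2$ case is proved ``in a similar fashion''---which is precisely the argument you carried out. Your explicit attention to performing the algebra elementwise so the local identity stands on its own matches the paper's treatment of the local exactness in the weighted case.
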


\begin{rem}
For all the a priori and a posteriori results in this section, we only assume Assumption \ref{assump_EU} is true, $A\in L^{\infty}(\O)^{d\times d}$, but $A$ can be possibly degenerate.
\end{rem}
\section{More A Priori Error Estimates for the Weighted-LSFEM}

\subsection{$A$ and $h$-weighted broken $H^2$-norm estimate}
For the weighted-LSFEM, $k\geq 2$ polynomial spaces are used to approximate $u$, and due to the non-divergence structure of the equation, the weighted-LSFEM can also be viewed as a method via an approximation of the $D^2$ operator. Thus, in this subsection, we derive an $A$ and $h$-weighted broken $H^2$-norm estimate of the numerical solution.

We use the following notation to denote a mesh-dependent norm:
$$
\|h v\|_0 := \left(\sum_{K\in\cT} h_K^2 \|v\|_{0,K}^2\right)^{1/2}, \quad \forall v\in L^2(\O).
$$
For example, 
$\|hA\ratio D_h^2 v_h\|_0 = 
\left(\sum_{K\in\cT} h_K^2 \|A\ratio  D^2 v_h\|_{0,K}^2\right)^{1/2}
$.

In two dimensions, we define $\tilde{V}_{k+2,2d}$ to be the $C^1$-conforming finite element space of degree $k+2$, $k\geq 2$ on $\cT$, which is the high-order version of the classical Hsieh-Clough-Tocher macro-element \cite{DDPS:79}.
In three dimensions, let $\tilde{V}_{h,3d}$ be the classical  $C^1$-conforming piecewise cubic Hsieh-Clough-Tocher macro-element space associated with the mesh $\cT$, see \cite{Ciarlet:78,NW:19}.

The proof of the following lemma can be found in \cite{BGS:10,GHV:11} for the two dimensional case and in \cite{NW:19} for the  three dimensional case. Although in these papers, the result are all presented in the global setting, a careful look into their proofs will find  the result is true locally due to the shape regularity assumption.
\begin{lem} \label{E_h} For any function $v_h \in S_{k,0}$ with $2\leq k \leq 3$ if the space dimension $d=3$, and $k\geq 2$ for $d=2$, there exists an averaging linear map $E_h: S_k \rightarrow \tilde{V}_{k+2,2d}\cap H_0^1(\O)$ in two dimensions and $E_h: S_k \rightarrow \tilde{V}_{h,3d}\cap H_0^1(\O)$ in three dimensions, the following estimate is true:
\beq \label{HCTinterpolation}
\|v_h - E_h v_h\|_{0,K} \leq C h_K^{3/2} \sum_{F\in \cE_K}\|\jump{\nabla v_h\cdot \bn}\|_{0,F}, \quad \forall K\in \cT,
\eeq
where $\cE_K$ is the collection of interior edges on elements that shares a common nodes with the element $K$ in two dimensions, and is the collection of interior faces on elements that shares a common vertex or a common edge middle point with the element $K$ in three dimensions.
\end{lem}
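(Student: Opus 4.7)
The plan is to construct the averaging operator $E_h$ explicitly in terms of degrees of freedom and then bound the discrepancy $v_h - E_h v_h$ element by element via a scaling argument. The $C^1$-conforming macro space $\tilde{V}_{k+2,2d}$ (respectively $\tilde{V}_{h,3d}$) is determined by two classes of degrees of freedom: Lagrange-type DOFs that are point evaluations at nodes of a sub-triangulation, and Hermite-type DOFs that evaluate gradients at vertices and, depending on the degree, normal derivatives at face or edge midpoints. In contrast, $S_k$ is specified by Lagrange DOFs alone, so every Hermite DOF is multivalued across element interfaces. I would define $E_h v_h$ by: matching $v_h$ at every Lagrange DOF common to $S_k$ and the macro space; setting each Hermite DOF at an interior vertex, edge, or face to the arithmetic average of the one-sided values taken from the elements sharing that entity; and prescribing zero for every DOF attached to a boundary node so that $E_h v_h \in H^1_0(\O)$. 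This yields a well-defined linear operator with the desired codomain.

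Next I would argue that on a fixed $K \in \cT$ the local polynomial $(v_h - E_h v_h)|_K$, viewed as an element of the enlarged macro space restricted to $K$, has vanishing Lagrange DOFs and its only non-vanishing Hermite DOFs are the averaging defects at the vertices, edges, and faces of $K$. At a vertex $z$ shared by several elements, the defect is a linear combination of terms of the form $(\nabla v_h|_K - \nabla v_h|_{K'})(z)$ for neighboring $K'$. Because $v_h \in H^1(\O)$ is continuous across any common face $F$, its tangential derivatives there already agree, so only the normal component $\jump{\nabla v_h\cdot\bn}(z)$ survives. A discrete inverse estimate on $F$ then gives the pointwise control
\[
|\jump{\nabla v_h \cdot \bn}(z)| \;\leq\; C\, h_F^{-(d-1)/2}\, \|\jump{\nabla v_h \cdot \bn}\|_{0,F}.
\]
The same reasoning handles Hermite DOFs attached to edge midpoints in 3D or to other sub-simplex entities.

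Finally I would invoke a standard Bramble--Hilbert / scaling argument on a reference macro element $\hat K$. Each basis function of $\tilde{V}$ dual to a single derivative-type DOF is a fixed polynomial with unit $L^2(\hat K)$-norm and unit derivative at its node. Pulling back to $K$ contributes a volume factor of order $h_K^{d/2}$ from the $L^2$-norm and an extra factor $h_K$ from the chain rule, so a unit derivative-DOF defect yields an $L^2(K)$-contribution of order $h_K^{1+d/2}$. Combining this with the inverse trace estimate above and summing over the $O(1)$ vertices, edges and faces touching $K$,
\[
\|v_h - E_h v_h\|_{0,K} \;\leq\; C\, h_K^{1+d/2}\, h_K^{-(d-1)/2} \sum_{F \in \cE_K} \|\jump{\nabla v_h \cdot \bn}\|_{0,F} \;=\; C\, h_K^{3/2} \sum_{F \in \cE_K} \|\jump{\nabla v_h \cdot \bn}\|_{0,F},
\]
where shape-regularity is used to replace each $h_F$ by $h_K$ uniformly on $\cE_K$.

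The main obstacle I anticipate is the careful DOF bookkeeping, especially in three dimensions where $\tilde{V}_{h,3d}$ is built on a sub-macro decomposition with face bubbles and edge-based normal-derivative DOFs: one must verify that \emph{every} Hermite DOF used in the averaging can be expressed cleanly as a linear combination of normal-jump values across concrete faces in $\cE_K$, and that no hidden degrees of freedom (such as tangential derivatives or interior bubble coefficients) break the identity. This is exactly the issue that forces the restriction $k\leq 3$ in 3D in the statement of the lemma, since a $C^1$-conforming macro element compatible with Lagrange spaces of higher degree is not available from the references cited.
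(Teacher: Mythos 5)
Your overall strategy --- build $E_h$ by assigning each degree of freedom of the $C^1$ macro space, average the multivalued derivative-type DOFs, expand $(v_h-E_h v_h)|_K$ in the dual basis with the averaging defects as coefficients, and conclude by an inverse trace estimate plus scaling --- is exactly the argument behind the references the paper leans on (\cite{BGS:10,GHV:11} in 2D, \cite{NW:19} in 3D); the paper itself offers no proof of Lemma \ref{E_h}, only those citations together with the remark that the estimate localizes under shape regularity. Your exponent count $h_K^{1+d/2}\cdot h_K^{-(d-1)/2}=h_K^{3/2}$ is correct and dimension-independent, and your observation that $(v_h-E_h v_h)|_K$ can be expanded in the macro-element basis is the right key step; note that this expansion is legitimate precisely because $v_h|_K$, a polynomial of degree $k$, itself belongs to the macro space on $K$ --- in 3D the macro space is piecewise cubic, which is the concrete source of the restriction $k\leq 3$ there, slightly sharper than your ``no compatible higher-degree element is available'' explanation.

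There are, however, two gaps, one of them genuine. First, your boundary prescription is wrong as stated: setting \emph{every} DOF attached to a boundary node to zero over-constrains $E_h v_h$, since membership in $H^1_0(\O)$ only requires the trace-determining data on $\p\O$ to vanish. If you also zero the vertex-gradient and boundary-face normal-derivative DOFs, then for an element $K$ touching $\p\O$ the defect contains the one-sided quantity $\nabla v_h\cdot\bn$ on boundary faces, which is not a jump and cannot be absorbed into the right-hand side of \eqref{HCTinterpolation}, because $\cE_K$ consists of \emph{interior} edges/faces only; the estimate would simply fail near the boundary. The repair is to zero only the tangential data along $\p\O$ (consistent, since $v_h\in S_{k,0}$ vanishes there) while averaging the normal-derivative DOFs exactly as in the interior. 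Second, and more minor: at a vertex $z$, elements sharing $z$ need not share a face with $K$, so your claim that the vertex defect reduces to normal jumps requires telescoping $\nabla v_h|_K(z)-\nabla v_h|_{K'}(z)$ through a chain of face-adjacent elements around $z$ --- across each common face the tangential components cancel by $C^0$-continuity, each step contributes one normal jump, and shape regularity bounds the chain length. This chaining is precisely why $\cE_K$ is the patch collection described in the lemma rather than just the edges/faces of $K$. With these two repairs your argument goes through.
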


\begin{lem}
There exists a constant $C>0$ independent of the mesh size $h$, such that, for any  $(v_h, \btau_h)\in S_{k,0}\times S_{k-1}^d$, with $2\leq k \leq 3$ if the space dimension $d=3$, and $k\geq 2$ for $d=2$, the following estimates are true:
\beq \label{ineq_vp}
\|\nabla(v_h - E_h v_h)\|_{0} \leq C \|\btau_h-\nabla v_h\|_{0}, \quad
\|D_h^2(v_h - E_h v_h)\|_{0} \leq C h^{-1}\|\btau_h-\nabla v_h\|_{0},
\eeq
\beq \label{ineq_vp2}
\mbox{and}\quad \|\btau_h - \nabla( E_h v_h)\|_{0} \leq C \|\btau_h-\nabla v_h\|_{0}.
\eeq
\end{lem}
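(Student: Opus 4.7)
The plan is to reduce everything to a bound of the form $\|v_h-E_h v_h\|_{0,K}\le C h_K\sum_{K'\in\omega_K}\|\btau_h-\nabla v_h\|_{0,K'}$ and then transfer it to derivatives via local inverse estimates. The key observation is that $\btau_h\in S_{k-1}^d\subset H^1(\O)^d$, so $\btau_h$ is continuous across every interior face $F$ and $\jump{\btau_h\cdot\bn}_F=0$. Consequently, $\jump{\nabla v_h\cdot\bn}_F=\jump{(\nabla v_h-\btau_h)\cdot\bn}_F$, which is the bridge between the jump quantity appearing in Lemma \ref{E_h} and the quantity $\btau_h-\nabla v_h$ we want on the right-hand side.

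Next I would apply, on each $K$ abutting $F$, the standard trace/scaling inequality for polynomials $\|p\|_{0,F}\le C h_K^{-1/2}\|p\|_{0,K}$ (combining a trace inequality with an inverse inequality), to deduce $\|\jump{\nabla v_h\cdot\bn}\|_{0,F}^2\le C h_K^{-1}(\|\nabla v_h-\btau_h\|_{0,K_1}^2+\|\nabla v_h-\btau_h\|_{0,K_2}^2)$ for the two elements sharing $F$. Plugging this into \eqref{HCTinterpolation} and using shape regularity so that all elements in $\omega_K$ have diameters comparable to $h_K$ yields
$$
\|v_h-E_h v_h\|_{0,K}\le C\,h_K\!\!\sum_{K'\in\omega_K}\|\btau_h-\nabla v_h\|_{0,K'}.
$$

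The first and second bounds now follow from the local inverse estimates $\|\nabla w_h\|_{0,K}\le C h_K^{-1}\|w_h\|_{0,K}$ and $\|D^2 w_h\|_{0,K}\le C h_K^{-2}\|w_h\|_{0,K}$ applied to $w_h:=v_h-E_h v_h$, which is a piecewise polynomial of bounded degree on the (shape-regular) macro-element refinement underlying $\tilde V_{k+2,2d}$ or $\tilde V_{h,3d}$. Squaring, summing over $K$, and absorbing the finite-overlap constant of the patches $\omega_K$ gives $\|\nabla(v_h-E_h v_h)\|_0\le C\|\btau_h-\nabla v_h\|_0$ directly, and an element-wise $h_K^{-1}$ factor for the Hessian, which combines with the mesh-dependent weight in $\|h\,D_h^2(\cdot)\|_0$ to produce the stated bound \eqref{ineq_vp}. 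The third estimate \eqref{ineq_vp2} is an immediate triangle inequality: $\|\btau_h-\nabla E_h v_h\|_0\le \|\btau_h-\nabla v_h\|_0+\|\nabla(v_h-E_h v_h)\|_0$ followed by the already-proved first bound.

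The main obstacle is verifying that the local inverse estimates genuinely apply to $v_h-E_h v_h$ on a shape-regular macro-element refinement; this requires checking that the HCT-type sub-triangulation is shape regular whenever $\cT$ is and that its sub-elements have diameters equivalent to $h_K$, so the inverse constants are uniform. The rest amounts to careful bookkeeping of the patches $\omega_K$ and the $h_K$ powers.
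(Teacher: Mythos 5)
Your proof follows essentially the same route as the paper's: the cancellation $\jump{\nabla v_h\cdot\bn}_F=\jump{(\nabla v_h-\btau_h)\cdot\bn}_F$ from $\btau_h\in H^1(\O)^d$, the discrete trace inequality fed into Lemma \ref{E_h}, local inverse estimates on $v_h-E_hv_h$, and a triangle inequality for \eqref{ineq_vp2}. Your added verification that the inverse estimates are legitimate on the shape-regular HCT macro-element sub-triangulation is a point the paper leaves implicit, and is correct.
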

\begin{proof}
To prove \eqref{ineq_vp}, since $\btau_h \in (H^1(\O))^d$, we have $\jump{\btau_h\cdot\bn}_F =0$ on an interior edge(2D)/face(3D) $F$, then by the discrete trace inequality:
$$
\|\jump{\nabla v_h\cdot \bn}\|_{0,F} = \|\jump{\nabla v_h-\btau_h}\cdot \bn\|_{0,F}
\leq  C h_F^{-1/2} \|\nabla v_h-\btau_h\|_{0,\o_F},
$$
where $\omega_F$ is the collection of two elements that share the common $F$. Thus, by \eqref{HCTinterpolation}, we have
\beq
\|v_h - E_h v_h\|_{0,K} \leq C h_K \sum_{F\in\cE_K} \|\nabla v_h-\btau_h\|_{0,\o_F}.
\eeq
Combined with the inverse inequalities $\|\nabla(v_h - E_h v_h)\|_{0,K} \leq Ch_K^{-1}\|v_h - E_h v_h\|_{0,K}$ , $\|D_h^2 (v_h - E_h v_h)\|_{0,K} \leq Ch_K^{-2}\|v_h - E_h v_h\|_{0,K}$, and the shape regularity of the mesh $\cT$, \eqref{ineq_vp} is proved.

The inequality of \eqref{ineq_vp2} is a simple consequence of the first inequality of \eqref{ineq_vp} and the triangle inequality. 
\end{proof}

\begin{lem}\label{lem_ineq}
The following inequality holds for all $(v_h, \btau_h)\in S_{k,0}\times S_{k-1}^d$, with $2\leq k \leq 3$ if the space dimension $d=3$, and $k\geq 2$ for $d=2$:
\beq \label{ineq_lem36}
\|hA\ratio D_h^2 v_h\|_0 %=\left(\sum_{K\in\cT} h_K^2 \|A\ratio  D^2 v_h\|_{0,K}^2\right)^{1/2} \leq %\left(\sum_{K\in\cT} h_K^2 \|A\ratio  \nabla \btau _h\|_{0,K}^2\right)^{1/2} 
\leq C(\|hA\ratio \nabla \btau_h\|_0 + \|\btau_h -\nabla v_h\|_0) \leq C\tri(v_h,\btau_h)\tri_h.
\eeq
\end{lem}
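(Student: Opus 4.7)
The plan is to use the $C^1$-conforming macro-element smoother $E_h v_h$ supplied by Lemma \ref{E_h} so that $D^2 (E_h v_h)$ is a bona fide $L^2$ object equal to $\nabla(\nabla E_h v_h)$. Writing $v_h = E_h v_h + (v_h - E_h v_h)$ and using $A : D^2 (E_h v_h) = A : \nabla(\nabla E_h v_h) = A : \nabla \btau_h + A : \nabla(\nabla E_h v_h - \btau_h)$, I decompose
\begin{equation*}
A \ratio D_h^2 v_h \;=\; A \ratio \nabla \btau_h \;+\; A \ratio \nabla(\nabla E_h v_h - \btau_h) \;+\; A \ratio D_h^2(v_h - E_h v_h).
\end{equation*}
Applying the triangle inequality to the weighted $L^2$-norm $\|h\cdot\|_0$, the first inequality in \eqref{ineq_lem36} reduces to showing that the two ``defect'' terms are each controlled by $C\|\btau_h - \nabla v_h\|_0$.

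Since $A \in L^\infty(\O)^{d\times d}$ has eigenvalues bounded by $\Lambda$, the pointwise bound $|A \ratio M| \leq C |M|$ holds, so I may drop $A$ from both defect terms at the cost of the generic constant. The key observation is that $\nabla E_h v_h - \btau_h$ and $v_h - E_h v_h$ are both piecewise polynomial on a shape-regular (macro-)mesh, so standard local inverse inequalities give, on each $K \in \cT$,
\begin{equation*}
h_K \|\nabla(\nabla E_h v_h - \btau_h)\|_{0,K} \leq C\|\nabla E_h v_h - \btau_h\|_{0,K},
\end{equation*}
\begin{equation*}
h_K \|D_h^2(v_h - E_h v_h)\|_{0,K} \leq C\|\nabla(v_h - E_h v_h)\|_{0,K}.
\end{equation*}
Squaring, summing over $K$, and exploiting shape regularity then yield
\begin{equation*}
\|hA \ratio \nabla(\nabla E_h v_h - \btau_h)\|_0 \leq C\|\nabla E_h v_h - \btau_h\|_0, \qquad \|h A \ratio D_h^2(v_h - E_h v_h)\|_0 \leq C\|\nabla(v_h - E_h v_h)\|_0.
\end{equation*}
At this point the estimates \eqref{ineq_vp2} and \eqref{ineq_vp} (first inequality) from the preceding lemma bound both right-hand sides by $C\|\btau_h - \nabla v_h\|_0$, which completes the first half of \eqref{ineq_lem36}.

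The second inequality is immediate: with $\tri(v_h,\btau_h)\tri_h^2 = \|hA \ratio \nabla \btau_h\|_0^2 + \|\btau_h - \nabla v_h\|_0^2$, the elementary bound $a+b \leq \sqrt{2(a^2+b^2)}$ applied to $a = \|hA \ratio \nabla \btau_h\|_0$ and $b = \|\btau_h - \nabla v_h\|_0$ finishes the proof.

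The only genuine subtlety is verifying that the local inverse inequalities apply to $\nabla E_h v_h - \btau_h$ and $v_h - E_h v_h$ on each $K \in \cT$: these are piecewise polynomial on a sub-triangulation of $K$ generated by the macro-element construction in Lemma \ref{E_h}, and the sub-triangulation inherits shape regularity from $\cT$, so the standard inverse estimates hold with constants independent of $h$. Once this is accepted, everything else is mechanical, and in particular the $L^\infty$ bound on $A$ is used directly without ever differentiating $A$ — this is crucial since no smoothness of $A$ is assumed.
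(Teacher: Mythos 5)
Your proposal is correct and follows essentially the same route as the paper's own proof: both split $v_h$ through the $C^1$ macro-element smoother $E_h v_h$ of Lemma \ref{E_h}, reduce the two defect terms via local inverse estimates (valid on the shape-regular HCT sub-triangulation) and the $L^\infty$ bound on $A$, and then invoke \eqref{ineq_vp} and \eqref{ineq_vp2} to control everything by $C\|\btau_h-\nabla v_h\|_0$. Your explicit verification that the inverse inequalities apply on the macro-element sub-triangulation is a detail the paper leaves implicit, but it does not change the argument.
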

\begin{proof}
Let $\tilde{v}_h = E_hv_h$, by the triangle inequality, we have
\beq
 \|hA\ratio D_h^2 v_h\|_0 \leq  \|hA\ratio D_h^2 (v_h-\tilde{v}_h)\|_0 + \|hA\ratio D^2 \tilde{v}_h\|_0 .
%h_K \|A\ratio  \ D^2 v_h\|_{0,K} \leq h_K^2 \|A\ratio   D^2 (v_h-\tilde{v}_h)\|_{0,K}+h_K^2 \|A\ratio  D^2 \tilde{v}_h\|_{0,K}
\eeq
For the term $\|hA\ratio D_h^2 (v_h-\tilde{v}_h)\|_0$, by the inverse estimate, the fact that $A \in L^{\infty}(\O)^{d\times d}$, and the second inequality in \eqref{ineq_vp}, 
we have
\beq \label{vh-tvh}
\|hA\ratio D_h^2 (v_h-\tilde{v}_h)\|_0  \leq C \|\nabla (v_h-\tilde{v}_h)\|_0\leq C\|\btau_h-\nabla v_h\|_{0}.
\eeq
On the other hand, by the triangle inequality, 
$$
 \|hA\ratio D^2 \tilde{v}_h\|_0  \leq \|hA\ratio \nabla \btau_h\|_0	+ \|hA\ratio \nabla (\btau_h-\nabla  \tilde{v}_h)\|_0.
$$
By the inverse estimate, the fact that  $A \in L^{\infty}(\O)^{d\times d}$, and the inequality \eqref{ineq_vp2}, 
we have,
$$
\|hA\ratio \nabla (\btau_h-\nabla  \tilde{v}_h)\|_0
\leq C \|\btau_h-\nabla \tilde{v}_h\|_0 \leq C\|\btau_h-\nabla v_h\|_0.
$$
Combined the above results, we prove the first inequality of the lemma. The second inequality is a consequence of a simple calculation.
\end{proof}

\begin{thm} \label{thm_apriori1}
Assume the solution %$\btau \in  H^r(\O)^d$ and 
$u\in H^{r+1}(\O)$, for some $r>1$, and $(u_h, \bsigma_h) \in S_{k,0}\times S_{k-1}^d$, with $2\leq k \leq 3$ if the space dimension $d=3$, and $k\geq 2$ for $d=2$, be the solution of the weighted-LSFEM \eqref{LSFEM_h}, then there exists a constant $C>0$ independent of the mesh size $h$, such that the following error estimate is true:
\beq \label{cnorm}
\|hA\ratio D_h^2 (u-u_h)\|_0 = \left(\sum_{K\in\cT} h_K^2 \|A\ratio  D^2 (u-u_h)\|_{0,K}^2\right)^{1/2} \leq C h^{\min(k,r+1)} \|u\|_{\min(k,r+1)}.
\eeq
\end{thm}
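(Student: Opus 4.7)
The plan is to reduce the weighted broken Hessian error to quantities already controlled: the triple-norm error from Theorem \ref{thm_LSFEM_apriori_h} and an interpolation error on the continuous solution. The key bridging tool is Lemma \ref{lem_ineq}, which converts any discrete triple-norm into a bound on the mesh-weighted broken $A\ratio D^2$. So I would first insert a fully discrete intermediate pair. Let $I_h u \in S_{k,0}$ denote the Lagrange interpolant of $u$ and $J_h\bsigma \in S_{k-1}^d$ denote the componentwise Lagrange interpolant of $\bsigma = \nabla u$. Apply the triangle inequality
\begin{equation*}
\|hA\ratio D_h^2(u-u_h)\|_0 \le \|hA\ratio D_h^2(u - I_h u)\|_0 + \|hA\ratio D_h^2(I_h u - u_h)\|_0 .
\end{equation*}
The first summand is a pure interpolation term, handled elementwise by $A\in L^\infty(\O)^{d\times d}$ together with standard Lagrange interpolation estimates for $S_k$; the extra factor $h_K$ inside the norm converts the usual broken $H^2$ order into one of order $h^{\min(k,r+1)}\|u\|_{\min(k,r+1)}$ after summing over elements.

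For the second summand, the hypothesis $2\le k\le 3$ (for $d=3$) or $k\ge 2$ (for $d=2$) is exactly what makes Lemma \ref{lem_ineq} applicable to the fully discrete pair $(I_hu - u_h,\ J_h\bsigma - \bsigma_h)$. I would then apply the lemma to obtain
\begin{equation*}
\|hA\ratio D_h^2(I_h u - u_h)\|_0 \le C\tri (I_h u - u_h,\ J_h\bsigma - \bsigma_h)\tri_h ,
\end{equation*}
then split the triple-norm by triangle inequality into $\tri (u - u_h,\ \bsigma - \bsigma_h)\tri_h + \tri (I_h u - u,\ J_h\bsigma - \bsigma)\tri_h$. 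The first piece is $\mathcal{O}(h^{\min(k,r+1)}\|u\|_{\min(k,r+1)})$ by Theorem \ref{thm_LSFEM_apriori_h}. The second piece is controlled by examining the three contributions in $\tri \cdot \tri_h$: $h_K\|A\ratio \nabla(J_h\bsigma - \bsigma)\|_{0,K}$, $\|J_h\bsigma - \bsigma\|_0$, and $\|\nabla(I_h u - u)\|_0$, each of which is bounded by routine approximation estimates (since $\bsigma\in H^r$ is approximated by $S_{k-1}$ and $u\in H^{r+1}$ by $S_{k,0}$).

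The main obstacle is Step 2 in a conceptual sense: the mesh-weighted broken $H^2$ norm has no direct triangle inequality route from an arbitrary continuous function to a discrete function through the triple norm, because $\tri\cdot\tri_h$ only controls $hA\ratio \nabla\btau$ on the gradient component, not $hA\ratio D^2 v$ on the scalar component. Lemma \ref{lem_ineq} resolves this precisely and only for discrete $v_h$, which is why we must first peel off the continuous part $u - I_h u$ by direct interpolation before appealing to the lemma. Everything else is bookkeeping: gather the two pieces to arrive at \eqref{cnorm}.
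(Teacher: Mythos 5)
Your proposal is correct and takes essentially the same route as the paper: both split $u-u_h$ by a triangle inequality into an interpolation part and a fully discrete part, and both invoke Lemma \ref{lem_ineq} to bound the weighted broken Hessian of the discrete part by its $\tri\cdot\tri_h$-norm. The only cosmetic difference is that you control the discrete triple norm via a triangle inequality plus the already-established a priori bound of Theorem \ref{thm_LSFEM_apriori_h}, whereas the paper re-derives the same quasi-best-approximation bound inline from Galerkin orthogonality and Cauchy--Schwarz; the two are equivalent, since Theorem \ref{thm_LSFEM_apriori_h} itself rests on that orthogonality.
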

\begin{proof}
By the triangle inequality, for an arbitrary $v_h \in S_{k,0}$, 
$$
\|hA\ratio D_h^2 (u-u_h)\|_0 \leq \|hA\ratio D_h^2 (u-v_h)\|_0 +\|hA\ratio D_h^2 (u_h-v_h)\|_0.
$$
By \eqref{ineq_lem36}, for an arbitrary $\btau_h \in S_{k-1}^d$, we have 
$$
C\|hA\ratio D_h^2 (u_h-v_h)\|_0 \leq \tri(u_h-v_h,\bsigma_h-\btau_h)\tri_h.
$$
On the other hand, by the orthogonality result
$$
a_h((u-u_h,\bsigma-\bsigma_h), (u_h-v_h,\bsigma_h-\btau_h))=0, \quad \forall (v_h,\btau_h) \in S_{k,0}\times S_{k-1}^d,
$$
we have
\begin{eqnarray*}
\tri(u_h-v_h,\bsigma_h-\btau_h)\tri_h^2 &= &a_h((u_h-v_h,\bsigma_h-\btau_h), (u_h-v_h,\bsigma_h-\btau_h)) \\
&=&  a_h((u-v_h,\bsigma-\bsigma_h), (u_h-v_h,\bsigma_h-\btau_h)) \\
&\leq& \tri(u-v_h,\bsigma-\bsigma_h)\tri_h \tri(u_h-v_h,\bsigma_h-\btau_h)\tri_h.
\end{eqnarray*}
Thus, 
$$
\tri(u_h-v_h,\bsigma_h-\btau_h)\tri_h \leq \tri(u-v_h,\bsigma-\bsigma_h)\tri_h, \quad \forall (v_h,\bsigma_h) \in S_{k,0}\times S_{k-1}^d.
$$
Combined the above results, we have
$$
\|hA\ratio D_h^2 (u-u_h)\|_0  \leq \inf_{(v_h,\bsigma_h)\in S_{k,0}\times S_{k-1}^d} ( \|hA\ratio D_h^2 (u-v_h)\|_0+ C\tri(u-v_h,\bsigma-\bsigma_h)\tri_h).
$$
By the approximation properties of functions in $S_k$, we have 
$$
\|hA\ratio D_h^2 (u-u_h)\|_0  \leq C h^{\min(k,r+1)} \|u\|_{\min(k,r+1)} .
$$
\end{proof}

\subsection{Error estimates based on the assumption of the non-divergence operator}

In this subsection, we prove error estimates based on the following assumption on the non-divergence operator.
\begin{assumption}
{\bf (Assumption on the non-divergence operator)} We assume that for the given $A$ and the domain $\O$, the following result holds for the operator $A\ratio D^2 v$ in non-divergence form:
\beq \label{regularity}
\|v\|_{1+\delta,\O} \leq  C \|A\ratio D^2 v\|_{0,\O}, \quad  \forall v\in V, \mbox{ for some } \delta \in [0,1],
\eeq
where the space $V$ is defined in \eqref{defV}.

For a special case $\delta =1$, it is
\beq \label{H2regularity}
\|v\|_{2,\O} \leq  C \|A\ratio D^2 v\|_{0,\O}, \quad \forall v\in H_0^1(\O)\cap H^2(\O).
\eeq
\end{assumption}
\begin{rem}
The strong assumption \eqref{H2regularity} is widely used in the proofs of stability of numerical methods and convergence analysis in the papers of \cite{FHN:17, FNS:18, WW:18}. 

To guarantee that \eqref{H2regularity} is true, we first assume that $\O$ is  a bounded convex domain, then it is known that the following Miranda-Talenti inequality holds
$$
\|D^2 v\|_0 \leq \|\Delta v\|_0, \quad \forall v\in H_0^1(\O)\cap H^2(\O).
$$
If we further assume that the matrix $A$ satisfies the so-called Cordes condition: there exists some $\epsilon \in (0,1]$ such that
\beq \label{Cordes}
|A|^2/(\mbox{tr}(A))^2 \leq 1/(d-1+\epsilon),
\eeq
then the strong assumption \eqref{H2regularity} can be proved.

For the case $d=2$, uniformly elliptic of $A$ implies the Codes condition, while for $d=3$, the PDE in non-divergence form may be ill-posed due to the lack of the condition. In \cite{Gal:17}, it is shown that for the bounded convex domain $\O$, \eqref{H2regularity} is true, with the constant $C$ depending on the matrix $A$ and the Codes condition.

In this paper, we assume the domain can be non-convex, thus only a weaker assumption on the operator \eqref{regularity} holds.
\end{rem}
\begin{thm} \label{thm_htc_inter}
Assume that the assumption of the operator \eqref{regularity} is true, the mesh is quasi-uniform,  $(u_h, \bsigma_h) \in S_{k,0}\times S_{k-1}^d$, with $k= 2$ or $3$ if the space dimension $d=3$, and $k \geq 2$ for $d=2$, is the weighted-LSFEM solution, then for $u\in H^{1+r}(\O)$, $r\geq 1$, we have
\beq
\|u -E_h u_h\|_{1+\delta} \leq  C h^{\min(k-1,r)} \|u\|_{\min(k,r+1)}, \quad \mbox{for some } \delta \in [0,1].
\eeq
Specifically, for the weakest case $\delta =0$, we have
\beq
\|u -E_h u_h\|_{1} \leq  C h^{\min(k-1,r)} \|u\|_{\min(k,r+1)}.
\eeq
and for the strongest case, $\delta =1$,
\beq
\|u -E_h u_h\|_{2} \leq  C h^{\min(k-1,r)} \|u\|_{\min(k,r+1)}.
\eeq
Here, $E_h$ is the HTC-element averaging operator defined in Lemma \ref{E_h}.
\end{thm}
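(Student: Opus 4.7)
The plan is to apply the operator regularity assumption \eqref{regularity} with $v := u - E_h u_h$, which reduces the task to controlling $\|A\ratio D^2(u - E_h u_h)\|_0$; then I would split by triangle inequality through the LSFEM solution $u_h$ and estimate each piece using Theorem \ref{thm_apriori1} and the averaging bounds from Lemma~\ref{E_h}. Before invoking \eqref{regularity}, I would verify that $u - E_h u_h$ is admissible, i.e.\ lies in $V$. By Lemma~\ref{E_h}, the averaged function $E_h u_h$ is $C^1$-conforming, piecewise polynomial, and vanishes on $\partial\Omega$; in particular its Hessian is globally in $L^2$ with no spurious interface contributions. Combined with $u \in H^{1+r}(\Omega)\cap H_0^1(\Omega) \subset H^2(\Omega)$ for $r\geq 1$, this puts $u - E_h u_h$ in $H_0^1(\Omega)$ with $A\ratio D^2(u - E_h u_h) \in L^2(\Omega)$, so \eqref{regularity} gives
\begin{equation*}
\|u - E_h u_h\|_{1+\delta} \le C\,\|A\ratio D^2(u - E_h u_h)\|_0 = C\,\|A\ratio D_h^2(u - E_h u_h)\|_0.
\end{equation*}

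Next, I would write
\begin{equation*}
\|A\ratio D_h^2(u - E_h u_h)\|_0 \le \|A\ratio D_h^2(u - u_h)\|_0 + \|A\ratio D_h^2(u_h - E_h u_h)\|_0,
\end{equation*}
and handle the two terms separately. For the first, Theorem~\ref{thm_apriori1} gives $\|h A\ratio D_h^2(u - u_h)\|_0 \le C h^{\min(k,r+1)}\|u\|_{\min(k,r+1)}$; the quasi-uniformity hypothesis $h_K \sim h$ lets me strip the $h$-weight at the cost of exactly one power of $h$, yielding the targeted rate $h^{\min(k-1,r)}\|u\|_{\min(k,r+1)}$. For the second term, I would use $A \in L^\infty(\Omega)^{d\times d}$ together with the inverse-type bound $\|D_h^2(u_h - E_h u_h)\|_0 \le C h^{-1}\|\bsigma_h - \nabla u_h\|_0$ from \eqref{ineq_vp}. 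Since $\bsigma = \nabla u$, the second component of $\tri(u-u_h,\bsigma-\bsigma_h)\tri_h$ collapses to $\|\bsigma_h - \nabla u_h\|_0$, so Theorem~\ref{thm_LSFEM_apriori_h} yields $\|\bsigma_h - \nabla u_h\|_0 \le C h^{\min(k,r+1)}\|u\|_{\min(k,r+1)}$, and multiplying by $h^{-1}$ produces the same $h^{\min(k-1,r)}$ rate. Adding the two pieces closes the proof.

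The main obstacle, as I see it, is the transition from the $h$-weighted estimate of Theorem~\ref{thm_apriori1} to the unweighted broken-Hessian quantity needed here; this is precisely where quasi-uniformity enters, as without $h_K \sim h$ one cannot pull the $h$-weight through the $L^2$-sum without loss of order, and one would be forced to settle for a mesh-weighted variant of the target norm. A secondary point I would verify carefully is the admissibility of $u - E_h u_h$ in $V$, namely the global $L^2$-regularity of $D^2 E_h u_h$, which is ultimately a consequence of the $C^1$-conformity of the HCT-type macroelement underlying Lemma~\ref{E_h} and is the reason the proof has to route through $E_h u_h$ rather than the raw $u_h$.
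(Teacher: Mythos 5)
Your proposal is correct and follows essentially the same route as the paper's proof: apply \eqref{regularity} to $u - E_h u_h$, split by the triangle inequality through $u_h$, strip the $h$-weight from Theorem \ref{thm_apriori1} via quasi-uniformity for the first term, and use the inverse-type bound $\|D_h^2(u_h - E_h u_h)\|_0 \leq Ch^{-1}\|\bsigma_h - \nabla u_h\|_0$ together with $\bsigma = \nabla u$ and Theorem \ref{thm_LSFEM_apriori_h} for the second. Your explicit verification that $u - E_h u_h$ is admissible in $V$ (via the $C^1$-conformity and boundary values of $E_h u_h$) is a point the paper leaves implicit, but it is a refinement rather than a different argument.
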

\begin{proof}
Let $\tilde{u}_h = E_h u_h$, then by the assumption of the operator and the triangle inequality,
\begin{eqnarray*}
C\|u -\tilde{u}_h\|_{1+\delta,\O} \leq \|A\ratio D^2(u -\tilde{u}_h)\|_{0}
 \leq \|A\ratio D_h^2(u -u_h)\|_{0}+\|A\ratio D_h^2(u_h -\tilde{u}_h)\|_{0}.
\end{eqnarray*}
For the first term on the righthand side, by Theorem \ref{thm_apriori1}, $k\geq 2$, and the fact the mesh is quasi-uniform, we have 
$$
\|A\ratio D_h^2(u -u_h)\|_{0,\O} \leq C h^{\min(k-1,r)} \|u\|_{\min(k,r+1)}.
$$
For the second term, by the same argument as in \eqref{vh-tvh}, 
\begin{eqnarray*}
\|A\ratio D^2(u_h -\tilde{u}_h)\|_{0,\O} &\leq & C h^{-1}\|\bsigma_h - \nabla u_h\|_0.
\end{eqnarray*}
Then by the fact that $\bsigma = \nabla u$, and Theorem \ref{thm_LSFEM_apriori_h},
\begin{eqnarray*}
\|\bsigma_h - \nabla u_h\|_0 & \leq & \|\bsigma_h - \bsigma + \nabla (u_h-u)\|_0 \leq   \tri(u-v_h,\bsigma-\bsigma_h)\tri_h \\
&\leq &   C h^{\min(k,r+1)} \|u\|_{\min(k,r+1)},.
\end{eqnarray*}
Combined the above estimates, we proved the theorem.
\end{proof}

\begin{thm} \label{thm_apriori_codes}
Assume that the assumption of the operator \eqref{regularity} is true, the mesh is quasi-uniform, the solution %$\btau \in  H^r(\O)^d$ and 
$u\in H^{r+1}(\O)$, for some $r>1$, and the numerical solution $(u_h, \bsigma_h) \in S_{k,0}\times S_{k-1}^d$, with $2\leq k \leq 3$ if the space dimension $d=3$, and $k\geq 2$ for $d=2$, be the solution of the weighted-LSFEM \eqref{LSFEM_h}, then we also have the following $H^1$-estimate:
\beq \label{H1error}
\|\nabla (u-u_h)||_0 \leq C h^{\min(k-1,r)} \|u\|_{\min(k,r+1)}.
\eeq
If we further assume that \eqref{H2regularity} is true, then the following broken $H^2$-norm estimate is also true:
\beq \label{brokenH2}
\|D^2_h(u-u_h)||_0 \leq C h^{\min(k-1,r)} \|u\|_{\min(k,r+1)}.
\eeq
\end{thm}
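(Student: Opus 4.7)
The plan is to insert the averaging operator $E_h$ (from Lemma \ref{E_h}) as an intermediary between $u$ and $u_h$, using Theorem \ref{thm_htc_inter} to control $u-E_hu_h$ and Lemma \ref{E_h}'s estimate \eqref{ineq_vp} to control the averaging defect $u_h-E_hu_h$. In both cases the result should follow by a triangle inequality split followed by an application of Theorem \ref{thm_LSFEM_apriori_h} to the least-squares term that appears.

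For the $H^1$-estimate \eqref{H1error}, I would split
\[
\|\nabla(u-u_h)\|_0\;\leq\;\|\nabla(u-E_hu_h)\|_0+\|\nabla(E_hu_h-u_h)\|_0 .
\]
The first term is dominated by $\|u-E_hu_h\|_1$, which by Theorem \ref{thm_htc_inter} (case $\delta=0$) is $\le Ch^{\min(k-1,r)}\|u\|_{\min(k,r+1)}$. For the second term I would invoke the first inequality of \eqref{ineq_vp} with $(v_h,\btau_h)=(u_h,\bsigma_h)$, yielding $\|\nabla(u_h-E_hu_h)\|_0\le C\|\bsigma_h-\nabla u_h\|_0$. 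Since $\bsigma=\nabla u$, the identity
\[
\bsigma_h-\nabla u_h=(\bsigma_h-\bsigma)-\nabla(u_h-u)
\]
shows this quantity is bounded by $\tri(u-u_h,\bsigma-\bsigma_h)\tri_h$, and Theorem \ref{thm_LSFEM_apriori_h} gives the bound $Ch^{\min(k,r+1)}\|u\|_{\min(k,r+1)}$, which is one order better than needed. Combining, \eqref{H1error} follows.

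For the broken $H^2$-estimate \eqref{brokenH2}, the argument is analogous but uses $\delta=1$ in Theorem \ref{thm_htc_inter} (which is exactly where the stronger assumption \eqref{H2regularity} is required). I would write
\[
\|D^2_h(u-u_h)\|_0\;\leq\;\|D^2(u-E_hu_h)\|_0+\|D^2_h(E_hu_h-u_h)\|_0.
\]
The first term is $\le\|u-E_hu_h\|_2\le Ch^{\min(k-1,r)}\|u\|_{\min(k,r+1)}$ by Theorem \ref{thm_htc_inter}. For the second term, the second inequality of \eqref{ineq_vp} gives $\|D^2_h(u_h-E_hu_h)\|_0\le Ch^{-1}\|\bsigma_h-\nabla u_h\|_0$; once again Theorem \ref{thm_LSFEM_apriori_h} bounds $\|\bsigma_h-\nabla u_h\|_0$ by $Ch^{\min(k,r+1)}\|u\|_{\min(k,r+1)}$, and the factor $h^{-1}$ reduces the power by one to yield the desired $h^{\min(k-1,r)}$.

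There is no serious obstacle here: essentially all the machinery has been established in Lemmas \ref{E_h}, \ref{lem_ineq}, Theorems \ref{thm_LSFEM_apriori_h}, \ref{thm_apriori1}, and \ref{thm_htc_inter}. The only minor point requiring care is bookkeeping of the exponents: one must note the identity $\min(k,r+1)-1=\min(k-1,r)$ to see that the $h^{-1}$ loss coming from the inverse estimate in \eqref{ineq_vp} is absorbed by the higher order $h^{\min(k,r+1)}$ bound on the least-squares norm, landing exactly on the asserted rate $h^{\min(k-1,r)}$. The quasi-uniformity of the mesh and the hypothesis on $k$ and $d$ are inherited from Theorems \ref{thm_apriori1} and \ref{thm_htc_inter}.
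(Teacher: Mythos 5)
Your proposal is correct and follows essentially the same route as the paper's own proof: both insert $\tilde{u}_h = E_h u_h$ via a triangle inequality, bound $u - E_h u_h$ with Theorem \ref{thm_htc_inter} (using the stronger assumption \eqref{H2regularity} only for the $\delta=1$ case), and control the averaging defect $u_h - E_h u_h$ through \eqref{ineq_vp} together with the a priori estimate of Theorem \ref{thm_LSFEM_apriori_h}. The exponent bookkeeping you flag, $\min(k,r+1)-1=\min(k-1,r)$, is precisely how the paper absorbs the $h^{-1}$ loss from the inverse estimate in the broken $H^2$ bound.
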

\begin{proof}
Let $\tilde{u}_h = E_h u_h$, then by the triangle inequality, 
$$
\|\nabla (u -u_h)\|_{0,\O} \leq \|\nabla (u -\tilde{u}_h)\|_{0,\O}+\|\nabla (\tilde{u}_h -u_h)\|_{0,\O}.
$$
The first term is good by Theorem \ref{thm_htc_inter}. For the second term, by \eqref{ineq_vp} and the a priori error estimate result of Theorem \ref{thm_LSFEM_apriori_h},
\begin{eqnarray*}
\|\nabla (\tilde{u}_h -u_h)\|_{0,\O} &\leq & C\|\bsigma_h-\nabla u_h\|_0 \leq C \tri (u-u_h,\bsigma-\bsigma_h) \tri_h \\ &\leq &  C h^{\min(k,r+1)} \|u\|_{\min(k,r+1)}.
\end{eqnarray*}
Then we prove \eqref{H1error}.

For  \eqref{brokenH2}, similarly, we have
$$
\|D_h^2(u -u_h)\|_{0,\O} \leq \|D^2(u -\tilde{u}_h)\|_{0,\O}+\|D_h^2(\tilde{u}_h -u_h)\|_{0,\O},
$$
and
\begin{eqnarray*}
\|D_h^2(\tilde{u}_h -u_h)\|_{0,\O} &\leq & C h^{-1} \|\bsigma_h-\nabla u_h\|_0 \leq C h^{-1} \tri (u-u_h,\bsigma-\bsigma_h) \tri_h \\ &\leq &  C h^{\min(k-1,r)} \|u\|_{\min(k,r+1)}.
\end{eqnarray*}
The result \eqref{brokenH2} then can be proved by combining the estimates of $\|D^2(u -\tilde{u}_h)\|_{0,\O}$ and $\|D_h^2(\tilde{u}_h -u_h)\|_{0,\O}$.
\end{proof}
\begin{rem}
The $H^1$-error estimate \eqref{H1error} is of course not optimal in the approximation order, but its requirement on the operator and the domain is much weaker than the strong assumption \eqref{H2regularity}. For example, assume that $\O$ is an L-shaped domain and $A\ratio D^2 v = \Delta v$, the result \eqref{H1error} shows that we still have convergence in $H^1$-norm under a weaker operator regularity assumption. 
\end{rem}

%\begin{thm}
%Assume that the solution $u\in H^{1+r}(\O)$, for some $r>0$, the assumption of the regularity is true, and the mesh is quasi-uniform,  assume that $(u_h, \bsigma_h) \in S_{k,0}\times S_{k-1}^d$, with $k= 2$ or $3$ if the space dimension $d=3$, and $k \geq 2$ for $d=2$, is the least-squares finite element solution, then
%\beq
%\|u -E_h u_h\|_{s+1} \leq  C h^{\min\{k-1, r\}} \|u\|_{{\min(k, r+1)}}.
%\eeq
%\end{thm}
%\begin{proof}
%Let $\tilde{u}_h = E_h u_h$, then by the assumption of the regularity and the triangle inequality, we have
%\begin{eqnarray*}
%C\|u -\tilde{u}_h\|_{s+1,\O} \leq \|A\ratio D^2(u -\tilde{u}_h)\|_{0}
% \leq \|A\ratio D^2(u -u_h)\|_{0}+\|A\ratio D^2(u_h -\tilde{u}_h)\|_{0}.
%\end{eqnarray*}
%For the first term on the right hand side, by Theorem \ref{thm_apriori1}, and the fact the mesh is quasi-uniform, we have 
%$$
%\|A\ratio D^2(u -u_h)\|_{0,\O} \leq C h^{\min\{k-1,r\}} \|u\|_{\min\{k,r+1\}}.
%$$
%For the second term, by the same argument as in \eqref{vh-tvh}, 
%\begin{eqnarray*}
%\|A\ratio D^2(u_h -\tilde{u}_h)\|_{0,\O} &\leq & C h^{-1}\|\bsigma_h - \nabla u_h\|_0.
%\end{eqnarray*}
%Then by the fact that $\bsigma = \nabla u$, and Theorem \ref{thm_LSFEM_apriori},
%\begin{eqnarray*}
%\|\bsigma_h - \nabla u_h\|_0 &\leq& \|\bsigma_h - \bsigma + \nabla (u_h-u)\|_0 \leq   \tri(u-v_h,\bsigma-\bsigma_h)\tri\\
%&\leq &  Ch^{\min\{k,r+1\}} \|u\|_{\min\{k,r+1\}}.
%\end{eqnarray*}
%Combined the above estimates, we proved the theorem.
%\end{proof}

\subsection{$L^2$ error estimate}
%All finite element $L^2$ error estimations are based on a duality argument, which essentially need an integration by parts and thus some differentiability of the coefficient matrix $A$. 
In this subsection, we discuss the $L^2$-error estimate of the weighted-LSFEM with extra regularity conditions of the equation. The proof is based on a modification of the argument of Cai and Ku \cite{CK:06} for the LSFEM of the elliptic equations in divergence form. The existence of the weight $h$ in the weighted-LSFEM adds extra difficulties to the analysis and requires the polynomial degree to approximate $u$ is at least three. 
 
Denote by 
$$
E = \bsigma -\bsigma_h \quad\mbox{and}\quad e = u-u_h
$$
the respective errors of the gradient and the solution.

Assume that $A$ is smooth enough that the operation $\gradt (\gradt (Az))$ is meaningful for a smooth $z$, where $Az$ is a matrix with items $a_{i,j}z$, and the divergence of a matrix $B$ is a column vector with each item being the divergence of the row of $B$. 

Let $z\in H^1_0(\O)$ be the solution of the following equation:
\beq \label{eq:dual}
\sum_{i,j=1}^d \p_{ij}^2 (a_{i,j} u)=\gradt (\gradt (Az)) = e \mbox{ in }\O, \quad z =0\mbox{ on }\p\O.
\eeq
We assume that both the original non-divergence PDE \eqref{eq_nondiv} and the dual equation \eqref{eq:dual} satisfy the full $H^2$-regularity:
\beq \label{reg_H2}
\|u\|_2 \leq C \|f\|_0 \quad \mbox{and}\quad \|z\|_2 \leq C \|e\|_0.
\eeq
In addition, we also assume that the solution of \eqref{eq_nondiv} satisfies the following stronger regularity assumption:
\beq \label{reg_H4}
\|u\|_4 \leq C \|f\|_2.
\eeq

We should note that in order to make $\gradt (\gradt (Az))$ is well-defined does not require that $A \in C^1(\O)^{d\times d}$, but only $z\in \{v \in L^2(\O):  Av \in H(\divvr;\O)^d, \gradt (Av) \in H(\divvr;\O)\}$. But of course, the high regularity assumptions probably need the smoothness of $A$.

\begin{thm} \label{L2_h} Assuming that the mesh is quasi-uniform with a mesh-size $h$, the regularity assumptions \eqref{reg_H2} and \eqref{reg_H4} are true, and the  weighted-LSFEM solutions $(u_h, \bsigma_h)$ belongs to $S_{k,0}\times S_{k-1}^d$, for $k\geq 3$, we have the following $L^2$-error estimate:
\beq
\|u-u_h\|_0 \leq C h \tri (u-u_h,\bsigma-\bsigma_h) \tri_h.
\eeq
\end{thm}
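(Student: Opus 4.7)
The plan is an Aubin--Nitsche duality argument adapted to the mesh-dependent weight in $\cJ_h$. Let $\boldsymbol{\xi}:=\gradt(Az)$, so the dual equation \eqref{eq:dual} reads $\gradt\boldsymbol{\xi}=e$ with $z|_{\partial\Omega}=0$; by \eqref{reg_H2} and the smoothness of $A$, $\|z\|_2+\|\boldsymbol{\xi}\|_1\leq C\|e\|_0$. Starting from $\|e\|_0^2=(e,\gradt\boldsymbol{\xi})$, one integration by parts (using $e|_{\partial\Omega}=0$) gives $\|e\|_0^2=-(\nabla e,\boldsymbol{\xi})$, and adding and subtracting $E$ inside $\nabla e$ produces
\[
\|e\|_0^2 = -(E,\boldsymbol{\xi}) + (E-\nabla e,\boldsymbol{\xi}).
\]
Since $u\in H^3$ implies $\bsigma\in H^2$ and hence $E\in H^1(\Omega)$, a second integration by parts (using smoothness of $A$, $E\in H^1$, and $z|_{\partial\Omega}=0$) rewrites the first piece as $-(E,\boldsymbol{\xi})=(A\ratio\nabla E,z)$. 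The goal is then to bound each summand of $\|e\|_0^2=(A\ratio\nabla E,z)+(E-\nabla e,\boldsymbol{\xi})$ by $Ch\,\tri(e,E)\tri_h\,\|e\|_0$.

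The workhorse is the LSFEM Galerkin orthogonality
\[
(E-\nabla e,\btau_h-\nabla v_h)+\sum_{K\in\cT}h_K^2(A\ratio\nabla E,A\ratio\nabla\btau_h)_K=0,\quad\forall\,(v_h,\btau_h)\in S_{k,0}\times S_{k-1}^d,
\]
combined with the LS-norm controls $\|E-\nabla e\|_0\leq\tri(e,E)\tri_h$ and $(\sum_K h_K^2\|A\ratio\nabla E\|_{0,K}^2)^{1/2}\leq\tri(e,E)\tri_h$. For $(E-\nabla e,\boldsymbol{\xi})$, I take $(v_h,\btau_h)=(0,\boldsymbol{\xi}_I)$ with $\boldsymbol{\xi}_I\in S_{k-1}^d$ a Scott--Zhang-type quasi-interpolant of $\boldsymbol{\xi}$ to obtain
\[
(E-\nabla e,\boldsymbol{\xi})=(E-\nabla e,\boldsymbol{\xi}-\boldsymbol{\xi}_I)-\sum_{K\in\cT}h_K^2(A\ratio\nabla E,A\ratio\nabla\boldsymbol{\xi}_I)_K;
\]
both pieces are bounded by $Ch\,\tri(e,E)\tri_h\,\|e\|_0$ via the standard estimates $\|\boldsymbol{\xi}-\boldsymbol{\xi}_I\|_0\leq Ch\|\boldsymbol{\xi}\|_1$ and $\|\nabla\boldsymbol{\xi}_I\|_0\leq C\|\boldsymbol{\xi}\|_1\leq C\|e\|_0$.

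For $(A\ratio\nabla E,z)$, pick a quasi-interpolant $z_I\in S_{k,0}$ of $z$ and split as $(A\ratio\nabla E,z-z_I)+(A\ratio\nabla E,z_I)$. Weighted Cauchy--Schwarz with $\|z-z_I\|_{0,K}\leq Ch_K^2\|z\|_{2,K}$ controls the first summand by $Ch\,\tri(e,E)\tri_h\,\|z\|_2\leq Ch\,\tri(e,E)\tri_h\,\|e\|_0$. Reversing the second integration by parts on the other summand yields $(A\ratio\nabla E,z_I)=-(E,\boldsymbol{\xi}_I^*)$ with $\boldsymbol{\xi}_I^*:=\gradt(Az_I)$; this leftover term is controlled by a further LSFEM orthogonality tested against an $S_{k-1}^d$-interpolant of $\boldsymbol{\xi}_I^*$, using the stronger regularity \eqref{reg_H4} to supply the requisite higher-order derivative bounds on the dual flux. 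Collecting all estimates gives $\|e\|_0^2\leq Ch\,\tri(e,E)\tri_h\,\|e\|_0$, and dividing by $\|e\|_0$ finishes the proof.

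The principal obstacle is precisely the residual pairing $(A\ratio\nabla E,z_I)$, equivalently $(E,\boldsymbol{\xi}_I^*)$. The mesh weight $h_K^2$ inside $\cJ_h$ means a single use of the Galerkin orthogonality gains only one factor of $h$ against the residual $A\ratio\nabla E$, while the duality argument must supply another $h$ by approximating the essentially second-derivative dual quantity $\boldsymbol{\xi}_I^*=\gradt(Az_I)$. Achieving second-order approximation in $S_{k-1}^d$ forces $k-1\geq 2$, i.e.\ $k\geq 3$, and the enhanced regularity \eqref{reg_H4} provides the higher-derivative bounds on the dual needed to make that super-approximation work---exactly the two additional hypotheses appearing in the statement of the theorem.
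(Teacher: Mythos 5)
Your duality skeleton matches the paper's, and parts of it are genuinely sound: treating $(E-\nabla e,\gradt(Az))$ by inserting an $S_{k-1}^d$ quasi-interpolant of $\gradt(Az)$ and invoking Galerkin orthogonality is a legitimate shortcut past the paper's second auxiliary problem $(w_2,\bphi_2)$ in \eqref{subproblems_h} (the paper instead solves $A\ratio D^2w_2=-A\ratio\nabla(\gradt(Az))$ and applies \eqref{reg_H2} to it), and your bound $Ch\tri(e,E)\tri_h\|e\|_0$ for that term is correct, as is $(A\ratio\nabla E,z-z_I)\leq Ch\tri(e,E)\tri_h\|z\|_2$. The gap is exactly where you flag ``the principal obstacle'': the leftover $(A\ratio\nabla E,z_I)$. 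Converting it to $-(E,\gradt(Az_I))$ and appealing to ``a further LSFEM orthogonality tested against an $S_{k-1}^d$-interpolant'' cannot work, because the orthogonality relation only ever produces the combination $(E-\nabla e,\btau_h-\nabla v_h)+\sum_{K}h_K^2(A\ratio\nabla E,A\ratio\nabla\btau_h)_K$; it gives no handle on a standalone pairing $(E,\cdot)$, and $\|E\|_0$ itself is not controlled by $\tri(e,E)\tri_h$ (the paper emphasizes that the norm equivalence \eqref{equvalience} fails precisely for this reason). To feed $(E,\brho)$ into the orthogonality you must realize $\brho=\bphi-\nabla w$ with $A\ratio\nabla\bphi=0$, i.e.\ solve another dual PDE of exactly the type you dismantled; iterating your integration by parts merely swaps $(E,\cdot)$-type and $(A\ratio\nabla E,\cdot)$-type terms without gaining any power of $h$ on the non-interpolated remainder, so the scheme is circular. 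A size check confirms the shortfall: $|(A\ratio\nabla E,z_I)|\leq\|A\ratio\nabla E\|_0\,\|z_I\|_0\leq Ch^{-1}\tri(e,E)\tri_h\|e\|_0$, two powers of $h$ away from what is needed.

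The paper closes this hole with a device your sketch never constructs: the \emph{rescaled} auxiliary problem $A\ratio D^2w_1=h^{-2}z$ with $\bphi_1=\nabla w_1$, chosen so that the whole term $(A\ratio\nabla E,z)=\sum_{K}h_K^2(A\ratio\nabla E,A\ratio\nabla\bphi_1)_K$ is reproduced exactly as the weighted part of $a_h((e,E),(w_1,\bphi_1))$. Galerkin orthogonality then reduces everything to the approximation error $\tri(w_1-w_{1,h},\bphi_1-\bphi_{1,h})\tri_h\leq Ch^3\bigl(\|w_1\|_4+\|\bphi_1\|_3+h^{-2}\|z\|_2\bigr)$, and \eqref{reg_H4} supplies $\|\bphi_1\|_3\leq C\|w_1\|_4\leq Ch^{-2}\|z\|_2\leq Ch^{-2}\|e\|_0$, so the $O(h^3)$ interpolation rate --- this is where $k\geq 3$ genuinely enters --- beats the $h^{-2}$ blow-up of the dual datum and nets the single factor of $h$. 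Your closing intuition about why $k\geq3$ and \eqref{reg_H4} appear is therefore directionally right, but the mechanism that actually uses them is the $h^{-2}$-scaled dual problem, not super-approximation of $\gradt(Az_I)$ in $S_{k-1}^d$; without it, the estimate of $(A\ratio\nabla E,z_I)$ does not go through.
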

\begin{proof}
Using the integration by parts, we have
\begin{eqnarray*}
\|e\|_0^2 &=& (e,e) = (e, \gradt (\gradt (Az))) = - (\nabla e, \gradt (Az)) = (E-\nabla e, \gradt (Az))-(E, \gradt (Az)) \\
&=& (E-\nabla e, \gradt (Az))+(\nabla E, Az) = (E-\nabla e, \gradt (Az))+(A\ratio \nabla E, z). 
\end{eqnarray*}
To match with the bilinear form of the weighted-LSFEM, we introduce two sub-auxiliary problems for $w_1\in H_0^1(\O)$ and $w_2\in H_0^1(\O)$:
\beq \label{subproblems_h}
\left\{
\begin{array}{lll}
\bphi_1 -\nabla w_1 &=& 0, \\
A\ratio  \nabla \bphi_1 &=& h^{-2}z,
\end{array}
\right.
\quad \mbox{and} \quad
\left\{
\begin{array}{lll}
\bphi_2 -\nabla w_2 &=& \gradt(A z), \\
A\ratio  \nabla \bphi_2 &=& 0,
\end{array}
\right.
\eeq
which are, in the PDE forms:
\beq \label{subPDE_h}
A\ratio D^2 w_1 = h^{-2}z \quad \mbox{and}\quad A\ratio D^2 w_2 = - A\ratio \nabla (\gradt (Az)).%, \quad w_1=w_2 =0 \mbox{  on }\p\O. 
\eeq
Let $w=w_1+w_2$ and $\bphi=\bphi_1 + \bphi_2$, then
\beq
\left\{
\begin{array}{rlll}
\bphi -\nabla w &=& \gradt(A z), &\mbox{in  } \O\\
A\ratio  \nabla \bphi &=& h^{-2}z, &\mbox{in  } \O,\\
u&=&0 &\mbox{on  } \p\O.
\end{array}
\right.
\eeq
Substitute \eqref{subproblems_h} into the representation of $\|e\|_0^2$: %and use the orthogonality of the error equation:
\begin{eqnarray*}
\|e\|_0^2
&=&  (E-\nabla e, \gradt (Az))+(A\ratio \nabla E, z) \\
&=&  (E-\nabla e, \bphi_1 -\nabla w_1)+\sum_{K\in\cT}(h_K^2 A\ratio \nabla E, A\ratio \nabla \bphi_1)_K \\[2mm] 
&& + (E-\nabla e, \bphi_2 -\nabla w_2)+\sum_{K\in\cT}(h_K^2 A\ratio \nabla E, A\ratio \nabla \bphi_2)_K \\[2mm]
&=& a_h((e,E), (w_1, \bphi_1)) + a_h((e,E), (w_2, \bphi_2)).
\end{eqnarray*}
Let $(w_{i,h}, \bphi_{i,h}) \in S_{k,0}\times S_{k-1}^d$, for $k\geq 3$ and $i=1$ and $2$ and use the orthogonality of the error equation, we have
\begin{eqnarray}\nonumber %\label{e21_h}
\|e\|_0^2
&= &a_h((e,E), (w_1-w_{1,h}, \bphi_1-\bphi_{1,h})) + a_h((e,E), (w_2-w_{2,h}, \bphi_2-\bphi_{2,h}))\\
\label{e21h}
&\leq& \tri (e,E) \tri_h (\tri (w_1-w_{1,h}, \bphi_1-\bphi_{1,h}) \tri_h + \tri (w_2-w_{2,h}, \bphi_2-\bphi_{2,h}) \tri_h ).
\end{eqnarray}
By the approximation properties of $S_{k,0}\times S_{k-1}^d$ with $k\geq 3$,
$$
\inf_{(w_{1,h}, \bphi_{1,h}) \in S_{k,0}\times S_{k-1}^d}\tri (w_1-w_{1,h}, \bphi_1-\bphi_{1,h}) \tri_h \leq C h^3(\|w_1\|_{4}+\|\bphi_1\|_3 + \|A\ratio \nabla \bphi_1\|_2).
$$
Since $A\ratio \nabla \bphi_1 = h^{-2}z$, we have
$$
\|A\ratio \nabla \bphi_1\|_2 =  h^{-2}\|z\|_2.
$$
By the regularity assumptions \eqref{reg_H4}, we have
$$
\|\bphi_1\|_3 \leq C \|w_1\|_4  \leq C h^{-2} \|z\|_2.
$$
Combined with the regularity assumption that $\|z\|_2 \leq C \|e\|_0$, we have 
\beq \label{e22h}
\inf_{(w_{1,h}, \bphi_{1,h}) \in S_{k,0}\times S_{k-1}^d}\tri (w_1-w_{1,h}, \bphi_1-\bphi_{1,h}) \tri_h \leq C h \|e\|_0.
\eeq
For the $w_2$ and $\bphi_2$ term, using the approximation properties and the fact $A\ratio \nabla \bphi_2 =0$, we have 
$$
\inf_{(w_{2,h}, \bphi_{2,h}) \in S_{k,0}\times S_{k-1}^d} \tri (w_2-w_{2,h}, \bphi_2-\bphi_{2,h}) \tri_h \leq Ch(\|w_2\|_{2}+\|\bphi_2\|_1).
$$
By the PDE form \eqref{subPDE_h} and using the regularity assumption for the non-divergence PDE \eqref{reg_H2} for two times,
$$
\|w_2\|_2 \leq C \|A\ratio \nabla (\gradt (Az))\|_0 \leq C \|z\|_2 \leq C \|e\|_0,
$$
and by the fact $\bphi_2 = \nabla w_2 + \gradt(Az)$, 
$$
\|\bphi_2\|_1 \leq  C (\|w_2\|_2 + \|\gradt(Az)\|_1) \leq C (\|w_2\|_2 + \|z\|_2)\leq  C \|e\|_0.
$$
Combined the results, we have
\beq \label{e23h}
\inf_{(w_{2,h}, \bphi_{2,h}) \in S_{k,0}\times S_{k-1}^d}\tri (w_2-w_{2,h}, \bphi_2-\bphi_{2,h}) \tri \leq C h \|e\|_0.
\eeq
From \eqref{e21h}, \eqref{e22h}, and \eqref{e23h}, we have
\begin{eqnarray}
\|e\|_0^2 \leq C h \tri (e,E) \tri_h \|e\|_0.
\end{eqnarray}
The theorem is proved.
\end{proof}
\begin{rem}
The result of this theorem requires some high regularity and at least degree three polynomial approximation for $u$. From the numerical experiments, we do find that this degree $3$ requirement is necessary. The proof of the result can be generalized to other $h$-weighted LSFEMs.  
\end{rem}

\section{More A Priori Error Estimates for the $L^2$-LSFEM}
In this section, we discuss the $L^2$-error estimation of the $L^2$-LSFEM with a standard $H^2$-regularity assumoption.  The proof is also based on modification of the proof of Cai and Ku \cite{CK:06} but is simpler than that of the weighted-LSFEM. With the $L^2$-error estimate available, we discuss the $H^1$-norm estimate with the same assumption. 

\begin{thm} \label{L2_0} Assuming that the mesh is quasi-uniform with a mesh-size $h$, the regularity assumptions \eqref{reg_H2} is true, and $(u_h, \bsigma_h)$ is the $S_{1,0}\times S_{1}^d$ $L^2$-LSFEM solution, we have the following $L^2$-  and $H^1$-error estimates:
\begin{eqnarray}
\|u-u_h\|_0 &\leq & C h \tri (u-u_h,\bsigma-\bsigma_h) \tri_0,\\[2mm]
\mbox{and}\quad
\|\nabla(u-u_h)\|_0 &\leq& C  \tri (u-u_h,\bsigma-\bsigma_h) \tri_0 + Ch \|u\|_2.
\end{eqnarray}
\end{thm}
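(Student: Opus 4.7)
The plan is an Aubin--Nitsche duality in the spirit of Cai--Ku applied to the $L^2$-LSFEM, modeled on the proof of Theorem 4.9 but without the $h^{-2}$-scaled auxiliary data that complicated the weighted argument.

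\textbf{$L^2$-estimate.} I would set $e=u-u_h$, $E=\bsigma-\bsigma_h$, and let $z\in H_0^1(\O)$ solve the adjoint equation $\gradt\gradt(Az)=e$; assumption \eqref{reg_H2} applied to the dual gives $\|z\|_2\le C\|e\|_0$. Two integrations by parts (using that $e,z$ vanish on $\p\O$) give, exactly as in the opening step of Theorem 4.9,
$$\|e\|_0^2=(E-\nabla e,\gradt(Az))+(A\ratio\nabla E,z).$$
I would then repeat the sub-auxiliary construction \eqref{subproblems_h} with the $h^{-2}$ weight replaced by $1$: take $w_1$ solving $A\ratio D^2w_1=z$ with $\bphi_1=\nabla w_1$, and $w_2$ solving $A\ratio D^2w_2=-A\ratio\nabla\gradt(Az)$ with $\bphi_2=\nabla w_2+\gradt(Az)$. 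Under \eqref{reg_H2}, together with enough smoothness of $A$ to place $A\ratio\nabla\gradt(Az)$ in $L^2$, both $\|w_i\|_2$ and $\|\bphi_i\|_1$ are bounded by $C\|e\|_0$, and setting $(w,\bphi):=(w_1+w_2,\bphi_1+\bphi_2)$ yields $\|e\|_0^2=a_0((e,E),(w,\bphi))$. Galerkin orthogonality of the LSFEM error equation then produces
$$\|e\|_0^2=a_0((e,E),(w-w_h,\bphi-\bphi_h))\le \tri(e,E)\tri_0\,\tri(w-w_h,\bphi-\bphi_h)\tri_0$$
for every $(w_h,\bphi_h)\in S_{1,0}\times S_1^d$, and the $L^2$-estimate follows if $\tri(w-w_h,\bphi-\bphi_h)\tri_0\le Ch\|e\|_0$ can be achieved.

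\textbf{The main obstacle.} The $\|(\bphi-\bphi_h)-\nabla(w-w_h)\|_0$ contribution is $\le Ch\|e\|_0$ for $w_h=I_h w$ and $\bphi_h=I_h\bphi$ Scott--Zhang interpolants, since $w\in H^2$ and $\bphi\in H^1$. The delicate piece is $\|A\ratio\nabla(\bphi-\bphi_h)\|_0=\|z-A\ratio\nabla\bphi_h\|_0$: because $\bphi$ is only in $H^1$, a naive quasi-interpolant's derivative error carries no factor of $h$. The idea is to instead pick $\bphi_h\in S_1^d$ so that $A\ratio\nabla\bphi_h$ serves as an approximant of the $H^1$-function $z$ itself, giving the desired $Ch\|z\|_1\le Ch\|e\|_0$, leveraging the $H^2$ regularity of the \emph{dual} unknown $z$ rather than of $\bphi$. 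Dividing through by $\|e\|_0$ then yields the first inequality.

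\textbf{$H^1$-estimate.} Writing $\nabla e=E-(E-\nabla e)$ and using $\|E-\nabla e\|_0\le\tri(e,E)\tri_0$ reduces the task to showing $\|E\|_0\le Ch\|u\|_2+C\tri(e,E)\tri_0$. Split $\|E\|_0\le\|\bsigma-I_h\bsigma\|_0+\|\boldsymbol{\delta}\|_0$ with $\boldsymbol{\delta}:=I_h\bsigma-\bsigma_h\in S_1^d$; the first summand is $\le Ch\|u\|_2$ by standard Scott--Zhang approximation of $\bsigma=\nabla u\in H^1$. For $\|\boldsymbol{\delta}\|_0$ I would expand
$$\|\boldsymbol{\delta}\|_0^2=(I_h\bsigma-\bsigma,\boldsymbol{\delta})+(E-\nabla e,\boldsymbol{\delta})+(\nabla e,\boldsymbol{\delta}),$$
with the first two inner products directly bounded by $(Ch\|u\|_2+\tri(e,E)\tri_0)\|\boldsymbol{\delta}\|_0$. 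The third is the key maneuver: integration by parts using $e=0$ on $\p\O$ gives $(\nabla e,\boldsymbol{\delta})=-(e,\gradt\boldsymbol{\delta})$, and then the inverse inequality $\|\gradt\boldsymbol{\delta}\|_0\le Ch^{-1}\|\boldsymbol{\delta}\|_0$ (valid on the quasi-uniform mesh) combined with the already-proved $L^2$-estimate $\|e\|_0\le Ch\,\tri(e,E)\tri_0$ yields $|(\nabla e,\boldsymbol{\delta})|\le C\tri(e,E)\tri_0\,\|\boldsymbol{\delta}\|_0$. Canceling $\|\boldsymbol{\delta}\|_0$ and collecting terms produces the required bound on $\|E\|_0$, hence on $\|\nabla(u-u_h)\|_0$.
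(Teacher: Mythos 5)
Your proposal is correct, and its $L^2$-half is essentially the paper's own proof: the paper uses exactly your duality scheme --- the dual problem $\gradt(\gradt(Az))=e$ with $\|z\|_2\le C\|e\|_0$ from \eqref{reg_H2}, the identity $\|e\|_0^2=(E-\nabla e,\gradt(Az))+(A\ratio\nabla E,z)$, the two auxiliary systems \eqref{subproblems} with $A\ratio\nabla\bphi_1=z$ and $A\ratio\nabla\bphi_2=0$, Galerkin orthogonality, and Cauchy--Schwarz (the paper keeps the pairs $(w_1,\bphi_1)$, $(w_2,\bphi_2)$ separate rather than summing them, which is immaterial). The step you flag as ``the main obstacle'' is handled in the paper by the one-line claim $\inf\,\tri(w_1-w_{1,h},\bphi_1-\bphi_{1,h})\tri_0\le Ch\bigl(\|w_1\|_2+\|\bphi_1\|_1+\|A\ratio\nabla\bphi_1\|_1\bigr)$, i.e.\ the paper likewise charges $\|A\ratio\nabla(\bphi_1-\bphi_{1,h})\|_0$ to the regularity of $z=A\ratio\nabla\bphi_1$ rather than of $\bphi_1$ itself; neither you nor the paper constructs the approximant achieving this simultaneous bound, so on that point you are at the same level of rigor and rather more candid about where the subtlety sits. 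Where you genuinely diverge is the $H^1$-estimate. The paper's argument is a three-line one on the scalar variable: with $v_h$ the nodal interpolant of $u$, it bounds $\|\nabla(u_h-v_h)\|_0\le Ch^{-1}\|u_h-v_h\|_0\le Ch^{-1}(\|u-u_h\|_0+\|u-v_h\|_0)$ and inserts the just-proved $L^2$-estimate and $\|u-v_h\|_0\le Ch^2\|u\|_2$. Your route detours through the gradient variable: the splitting $E=(\bsigma-I_h\bsigma)+(I_h\bsigma-\bsigma_h)$, your three-term expansion of $\|I_h\bsigma-\bsigma_h\|_0^2$ (the algebra checks), and the treatment of $(\nabla e,I_h\bsigma-\bsigma_h)$ by integration by parts (legitimate, since $e\in H_0^1(\O)$ and $S_1^d\subset H^1(\O)^d$), the global inverse inequality (this is where quasi-uniformity enters, as in the paper), and the $L^2$-bound on $e$. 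This is longer than the paper's argument but strictly more informative: it yields the intermediate estimate $\|\bsigma-\bsigma_h\|_0\le C\bigl(\tri(u-u_h,\bsigma-\bsigma_h)\tri_0+h\|u\|_2\bigr)$, an $L^2$-bound on the gradient variable that the paper does not prove --- indeed in Section 6 the authors state that for $\|\bsigma-\bsigma_h\|_0$ in the $L^2$-LSFEM they ``do not have theoretical analysis,'' so your $H^1$ detour closes a gap the paper leaves open, at the price of extra work.
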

\begin{proof}
We have the same error representation as in the weighted-LSFEM case,
\begin{eqnarray*}
\|e\|_0^2 = (E-\nabla e, \gradt (Az))+(A\ratio \nabla E, z). 
\end{eqnarray*}
To match with the bilinear form of $L^2$-LSFEM, we also introduce two sub-auxiliary problems for $w_1\in H_0^1(\O)$ and $w_2\in H_0^1(\O)$:
\beq \label{subproblems}
\left\{
\begin{array}{lll}
\bphi_1 -\nabla w_1 &=& 0, \\
A\ratio  \nabla \bphi_1 &=& z,
\end{array}
\right.
\quad \mbox{and} \quad
\left\{
\begin{array}{lll}
\bphi_2 -\nabla w_2 &=& \gradt(A z), \\
A\ratio  \nabla \bphi_2 &=& 0,
\end{array}
\right.
\eeq
which are, in the PDE forms:
\beq \label{subPDE}
A\ratio D^2 w_1 = z \quad \mbox{and}\quad A\ratio D^2 w_2 = - A\ratio \nabla (\gradt (Az)).%, \quad w_1=w_2 =0 \mbox{  on }\p\O. 
\eeq
Let $w=w_1+w_2$ and $\bphi=\bphi_1 + \bphi_2$, and
%\beq
%\left\{
%\begin{array}{llll}
%\bphi -\nabla w &=& \gradt(A z), &\in \O\\
%A\ratio  \nabla \bphi &=& z, &\in \O,\\
%u&=&0 &\mbox{on  } \p\O.
%\end{array}
%\right.
%\eeq
substitute \eqref{subproblems} into the representation of $\|e\|_0^2$:% and use the orthogonality of the error equation:
\begin{eqnarray*}
\|e\|_0^2
%&=& (E-\nabla e, \gradt (Az))+(\nabla E, (Az)) = (E-\nabla e, \gradt (Az))+(A\ratio \nabla E, z) \\
%&=&  (E-\nabla e, \bphi_1 -\nabla w_1)+(A\ratio \nabla E, A\ratio \nabla \bphi_1) \\[2mm] 
%&& + (E-\nabla e, \bphi_2 -\nabla w_2)+(A\ratio \nabla E, A\ratio \nabla \bphi_2) \\[2mm]
&=& a_0((e,E), (w_1, \bphi_1)) + a_0((e,E), (w_2, \bphi_2)).
\end{eqnarray*}
Let $(w_{i,h}, \bphi_{i,h}) \in S_{1,0}\times S_{1}^d$, for $i=1$ and $2$ and use the orthogonality of the error equation, we have
\begin{eqnarray}\nonumber% \label{e21_L2}
\|e\|_0^2
&= &a_0((e,E), (w_1-w_{1,h}, \bphi_1-\bphi_{1,h})) + a_0((e,E), (w_2-w_{2,h}, \bphi_2-\bphi_{2,h}))\\
\label{e21_L2}
&\leq& \tri (e,E) \tri_0 (\tri (w_1-w_{1,h}, \bphi_1-\bphi_{1,h}) \tri_0 + \tri (w_2-w_{2,h}, \bphi_2-\bphi_{2,h}) \tri_0).
\end{eqnarray}
By the approximation properties of $S_{1,0}\times S_{1}^d$,
$$
\inf_{(w_{1,h}, \bphi_{1,h}) \in S_{1,0}\times S_1^d} \tri (w_1-w_{1,h}, \bphi_1-\bphi_{1,h}) \tri_0 \leq C h(\|w_1\|_{2}+\|\bphi_1\|_1 + \|A\ratio \nabla \bphi_1\|_1).
$$
Since $A\ratio \nabla \bphi_1 = z$, we have
$$
\|A\ratio \nabla \bphi_1\|_1 \leq \|A\ratio \nabla \bphi_1\|_2 =  \|z\|_2 \leq C \|e\|_0.
$$
By the regularity assumptions \eqref{reg_H2}, we have
$$
\|\bphi_1\|_1 \leq C \|w_1\|_2  \leq C \|z\|_0 \leq C \|z\|_2 \leq  C \|e\|_0.
$$
Then we have
\beq \label{e22_L2}
\inf_{(w_{1,h}, \bphi_{1,h}) \in S_{1,0}\times S_1^d}  \tri (w_1-w_{1,h}, \bphi_1-\bphi_{1,h}) \tri_0 \leq C h \|e\|_0.
\eeq
For the $w_2$ and $\bphi_2$ term, the proof of
\beq \label{e23_L2}
\inf_{(w_{2,h}, \bphi_{2,h}) \in S_{1,0}\times S_1^d}  \tri (w_2-w_{2,h}, \bphi_2-\bphi_{2,h}) \tri_0 \leq C h \|e\|_0.
\eeq
is identical to the  estimate of the same term in the proof in $L^2$-estimate of the weighted-LSFEM.

From \eqref{e21_L2}, \eqref{e22_L2}, and \eqref{e23_L2}, we have
\begin{eqnarray}
\|e\|_0^2 \leq C h \tri (e,E) \tri_0 \|e\|_0,
\end{eqnarray}
thus the $L^2$ error estimate is proved.

To prove the $H^1$-norm error estimate, let $v_h$ be the nodal interpolation of $u$ in $S_{0,1}$, by the triangle inequality, we have
$$
\|\nabla(u-u_h)\|_0 \leq \|\nabla(u-v_h)\|_0 + \|\nabla(u_h-v_h)\|_0. 
$$
By the inverse estimates and the triangle inequality, 
$$
\|\nabla(u_h-v_h)\|_0 \leq  Ch^{-1} \|u_h-v_h\|_0  \leq 
 Ch^{-1} (\|u-u_h\|_0 + \|u-v_h\|_0).
$$
Then, by the $L^2$ estimate of $u-u_h$  and the approximation property of $v_h$,
$$
\|\nabla(u-u_h)\|_0 \leq \|\nabla(u-v_h)\|_0 + Ch^{-1} (\|u-u_h\|_0 + \|u-v_h\|_0)
\leq C ( \tri (e,E) \tri_0 + h\|u\|_2).
$$
The theorem is proved.
\end{proof}

\section{Numerical Experiments}
\setcounter{equation}{0}

In this section, for a variety of problems, we will test them by $S_{1,0}\times S_1^2$ $L^2$-LSFEM and  $S_{2,0}\times S_1^2$ and $S_{3,0}\times S_2^2$ weighted-LSFEMs. We first present in Table \ref{order} a table of the optimal convergence rates with an assumption of both a smooth coefficient matrix and a smooth solution. For the purpose of comparison to $\|h A\ratio D_h^2 (u-u_h)\|_0$, we use $\|h D_h^2 (u-u_h)\|_0$ in stead of the broken $H^2$-norm $\|D_h^2 (u-u_h)\|_0$.

\begin{table}[!htp]
\caption{Convergence orders of LSFEMs with smooth coefficients and solutions, where $E=\bsigma-\bsigma_h$ and $e= u-u_h$.}
\begin{center}
\begin{tabular}{|c|c|c|c|c|c|c|}
\hline
method & $\tri(e,E) \tri$ & $\|e\|_0$ & $\|\nabla e\|_0$  & $\|E\|_0$ & $\|h A\ratio D_h^2 e\|_0$ & $\|h D_h^2 e\|_0$ \\
\hline
$S_{1,0}\times S_1^2$ $L^2$ & 1 & 2&1 & [1,2] & N.A. & N.A.\\
\hline
$S_{2,0}\times S_1^2$ weighted & 2 & 2& 2 & 2 & 2 & 2 \\
\hline
$S_{3,0}\times S_2^2$ weighted & 3 & 4& 3 & 3 & 3 & 3\\
\hline
\end{tabular}
\end{center}
\label{order}
\end{table}%
These convergence orders have different smoothness requirements, at least, theoretically. To get an optimal order of $\tri(u-u_h,\bsigma-\bsigma_h) \tri$ and  $\|hA\mathbin{:} D_h^2 (u-u_h)\|_0$ (for the weighted-LSFEM only), we only require the solution is piecewisely smooth enough while the matrix $A$ can be discontinuous or degenerate; to get an optimal order of the error of solution in the discrete broken $H^2$-norm, of course we need the $H^2$-regularity of the solution, and the matrix $A$ can be discontinuous but cannot be degenerate; we need both the solution and the coefficient matrix to be smooth to get optimal orders of the other norms.

In the paper, we only prove a non-optimal order $H^1$-norm estimate of the error of the weighted-LSFEM. By the same argument as the $L^2$-LSFEM, we can get an optimal $H^1$-norm convergence order for the $S_{k,0}\times S_{k-1}^2$ weighted-LSFEM is $k$, $k\geq 3$, for smooth solutions and coefficients. For the $S_{2,0}\times S_{2-1}^2$ weighted-LSFEM, we observe the order is the best interpolation order $2$, for problems with smooth solutions and coefficients.  

\subsection{Examples with a smooth solution}
Let $r = \sqrt{x^2+y^2}$. We consider several different cases:
$$
A_1 = 
\left(
\begin{array}{cc} 
	r^{1/2}+1  & -r^{1/2}\\
  	-r^{1/2}  &  5r^{1/2}+1 
   \end{array}
\right), \quad
A_2 = 
\left(
\begin{array}{cc} 
	-\frac{5}{\ln(r)}+15  & 1\\
  	1  &  -\frac{1}{\ln(r)}+3 
   \end{array}
\right),
$$
$$
A_3 =
\left(
\begin{array}{cc} 
	2  &  \frac{xy}{|xy|}\\
  	 \frac{xy}{|xy|} &  2 
   \end{array}
\right)
\quad \mbox{and} \quad
 A_4 =
\left(
\begin{array}{cc} 
	|x|^{2/3}  & -|x|^{1/3}|y|^{1/3}\\
  	-|x|^{1/3}|y|^{1/3}  &  |y|^{2/3} 
   \end{array}
\right).
$$
The coefficients of the matrices $A_1$, $A_2$, $A_3$, and $A_4$ are H\"older continuous, uniformly continuous, discontinuous, and degenerate, respectively. The matrix $A_4$ is degenerate since $\det(A_4) =0$, thus the equation is not uniformly elliptic and not satisfying the Cordes condition. The domain is chosen to be $\O = (-1/2,1/2)^2$. We choose the data $f$ for different $A$ such that the solution is 
$$
u(x,y) = \sin(2 \pi x) \sin(2\pi y) e^{x \cos(y)}.
$$

The initial mesh contains four triangles by connecting two diagonals. Eight uniform refinements are performed to generate a series of numerical solutions.

\subsubsection{$L^2$-LSFEM}

\begin{figure}[htp]
\centering 
%\subfigure{ 
%\includegraphics[width=0.75\linewidth]{convergence_LSFEM0_P1}}
%\hspace{0.01\linewidth}
\subfigure[H\"older continuous coefficient $A_1$]{
\includegraphics[width=0.41\linewidth]{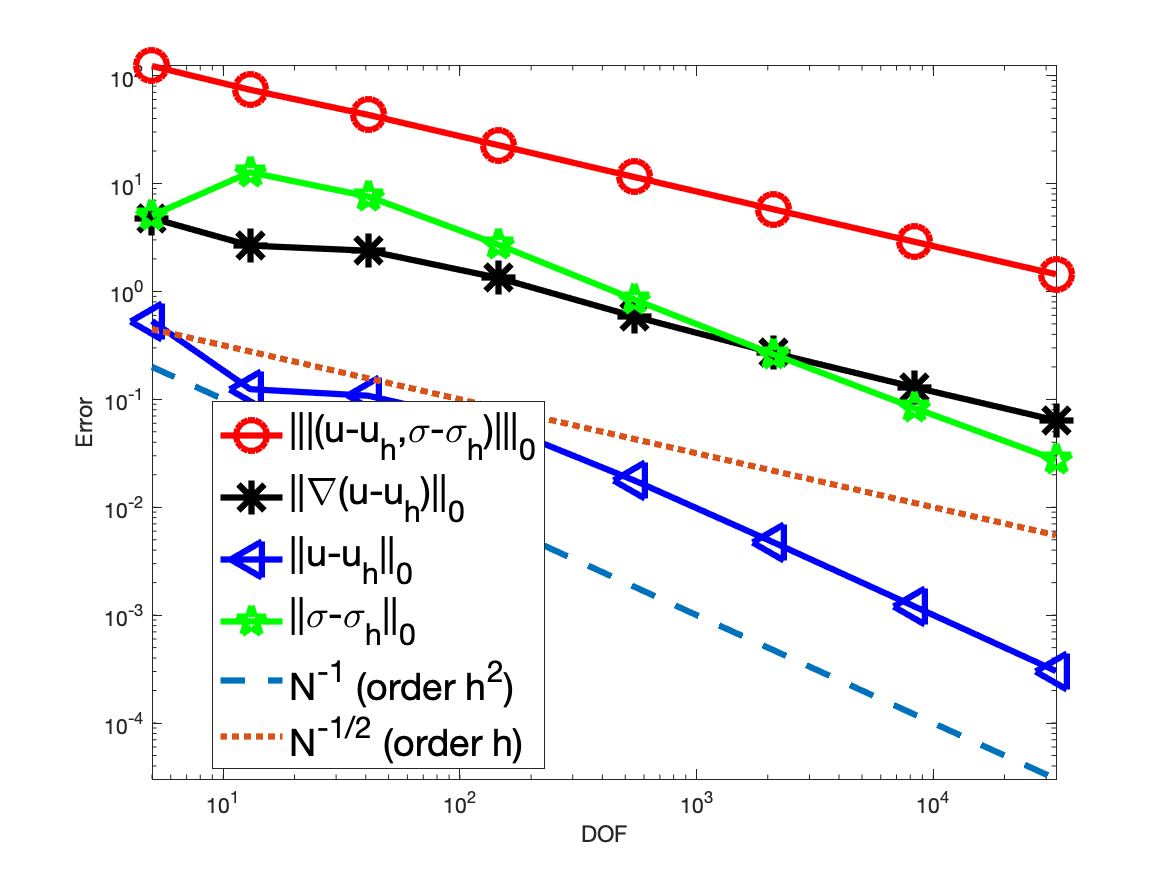}}
\subfigure[Uniformly continuous coefficient $A_2$]{
\includegraphics[width=0.41\linewidth]{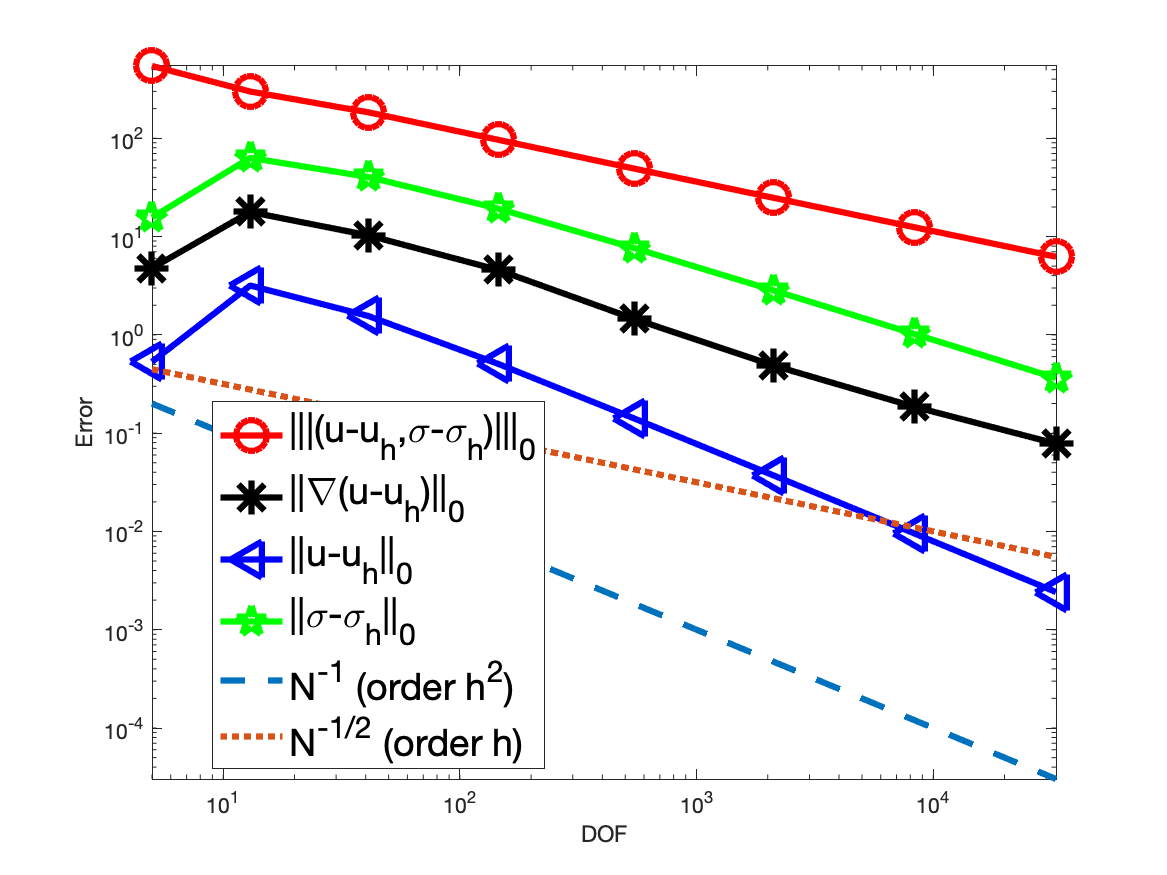}}\\
\subfigure[Discontinuous continuous coefficients $A_3$]{
\includegraphics[width=0.41\linewidth]{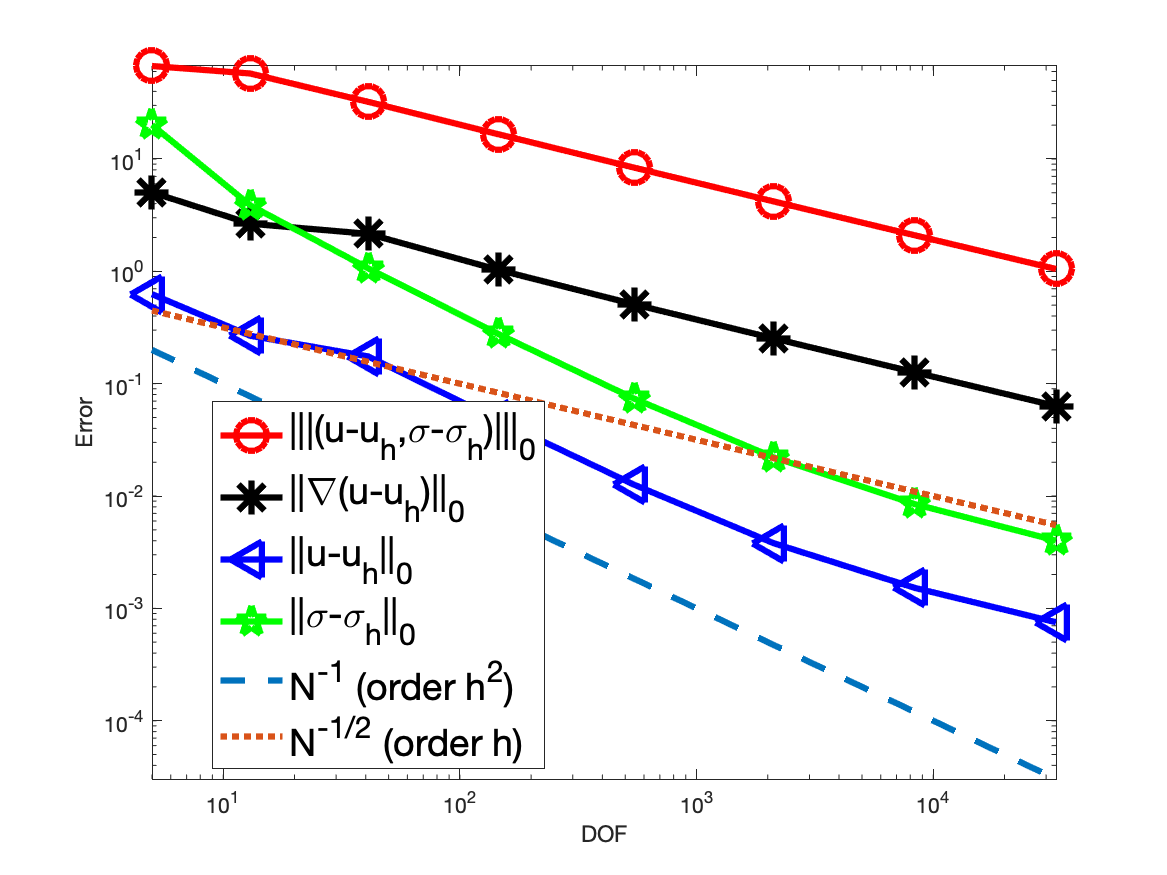}}
\subfigure[Degenerate continuous coefficients $A_4$]{
\includegraphics[width=0.41\linewidth]{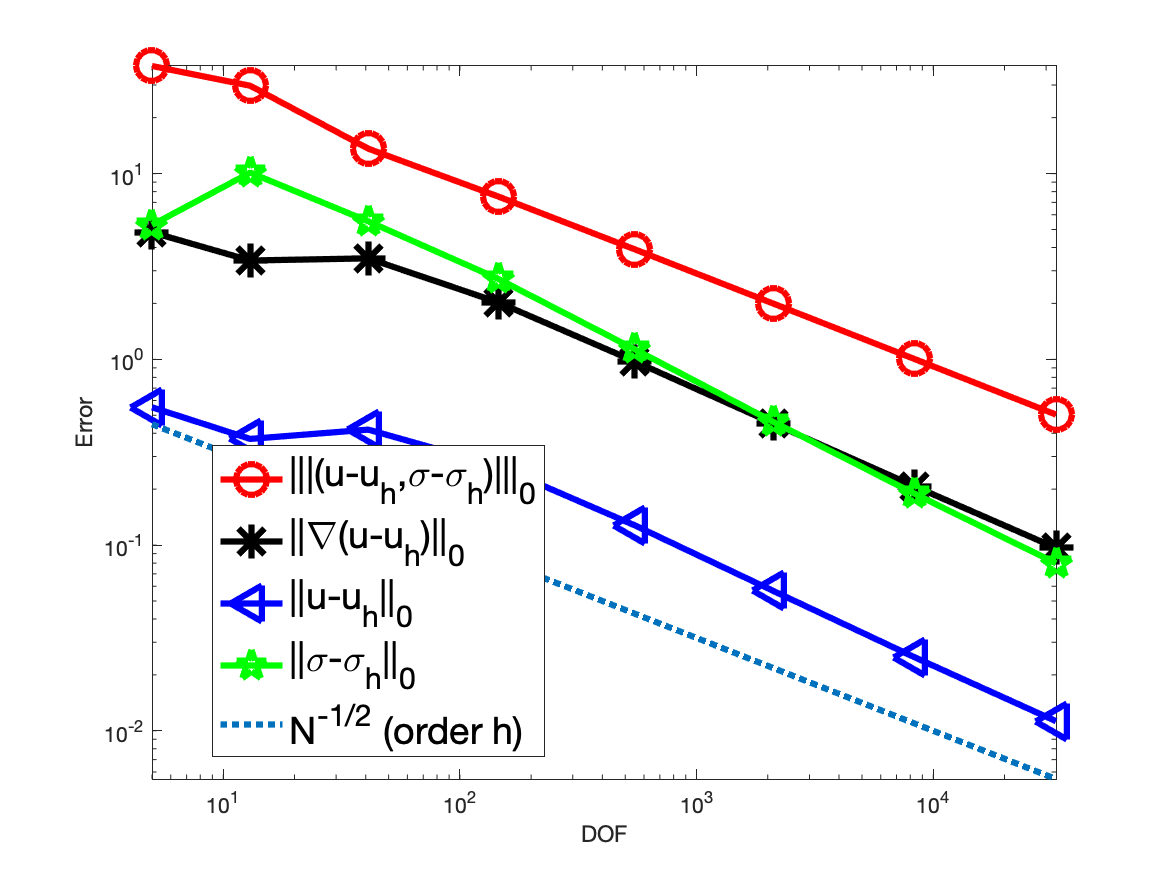}}
\caption{Convergence histories for $L^2$-LSFEM with $S_{1,0}\times S_1^2$  smooth solution}
 \label{con_LSFEM_P1}
\end{figure}

From Fig. \ref{con_LSFEM_P1}, we can see that for a problem with a smooth solution, the convergence order of the  error in the LSFEM norm $\tri(u-u_h,\bsigma-\bsigma_h) \tri_0$ is always one for all cases if the  $L^2$-LSFEM with $S_{1,0}\times S_1^2$ approximation is used, which is optimal and compatible with the theoretical analysis.

For the error of $u$ in $H^1$-semi norm $\|\nabla (u-u_h)\|_0$, the order is the optimal one for all cases. For the $L^2$-norm error $\|u-u_h\|_0$, the H\"older continuous and  uniformly continuous problems have an optimal order two, while the discontinuous case has an order slightly less than two and the degenerate case has an order slightly bigger than one. These results are not covered by the theoretical analysis, but the numerical experiments suggest that the errors in $L^2$-norm are more sensitive to the smoothness of the coefficients, while the errors in $H^1$-norm is less sensitive. 

For the error of $\bsigma$ in $L^2$ norm $\|\bsigma-\bsigma_h\|_0$, we do not have theoretical analysis, and the numerical results do show that the convergence order depends on the problem. The observed order is between one, which is the approximation order of $\|\nabla (u-v_h)\|_0$, and two, which is the optimal interpolation order of $\|\bsigma-\btau_h\|_0$.  

\subsubsection{Weighted-LSFEM}
\begin{figure}[!htb]
\centering 
%\subfigure{ 
%\includegraphics[width=0.75\linewidth]{convergence_LSFEM0_P1}}
%\hspace{0.01\linewidth}
\subfigure[H\"older continuous coefficient $A_1$]{
\includegraphics[width=0.4\linewidth]{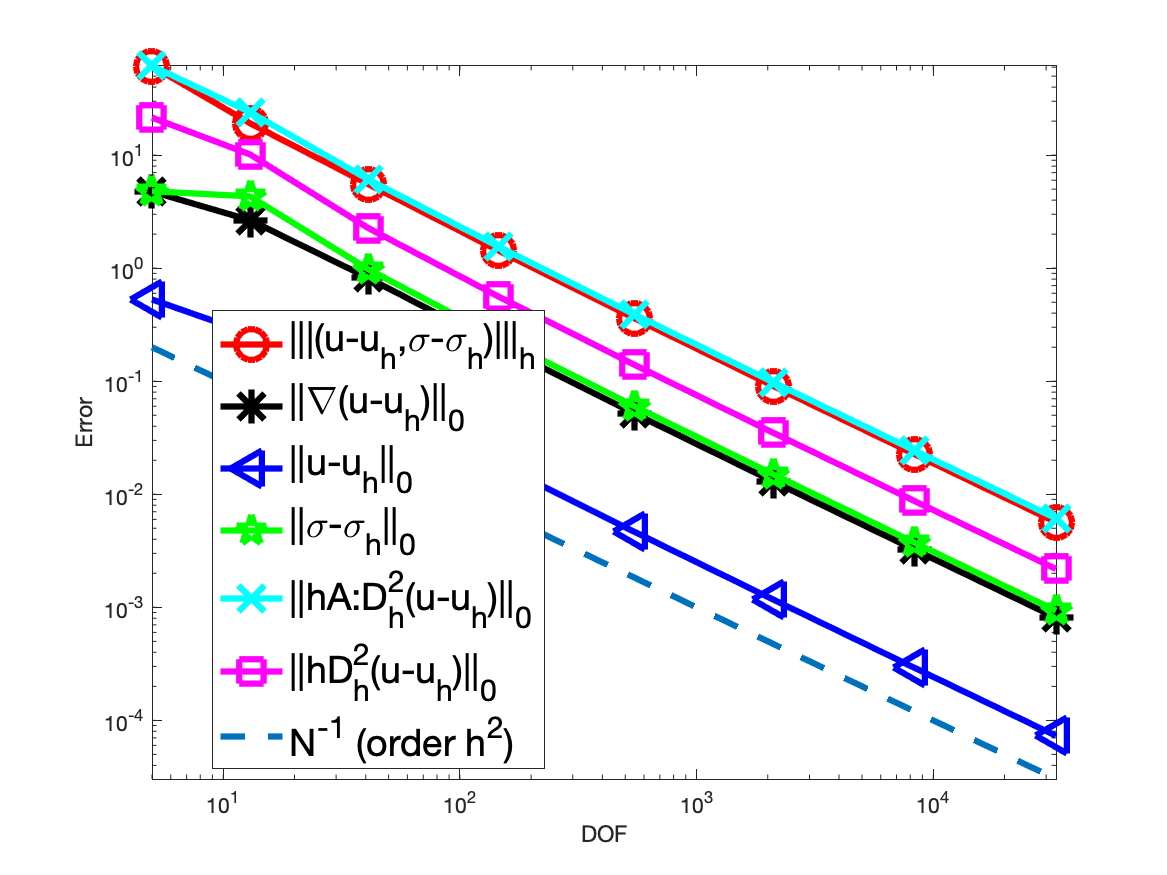}}
\subfigure[Uniformly continuous coefficient $A_2$]{
\includegraphics[width=0.4\linewidth]{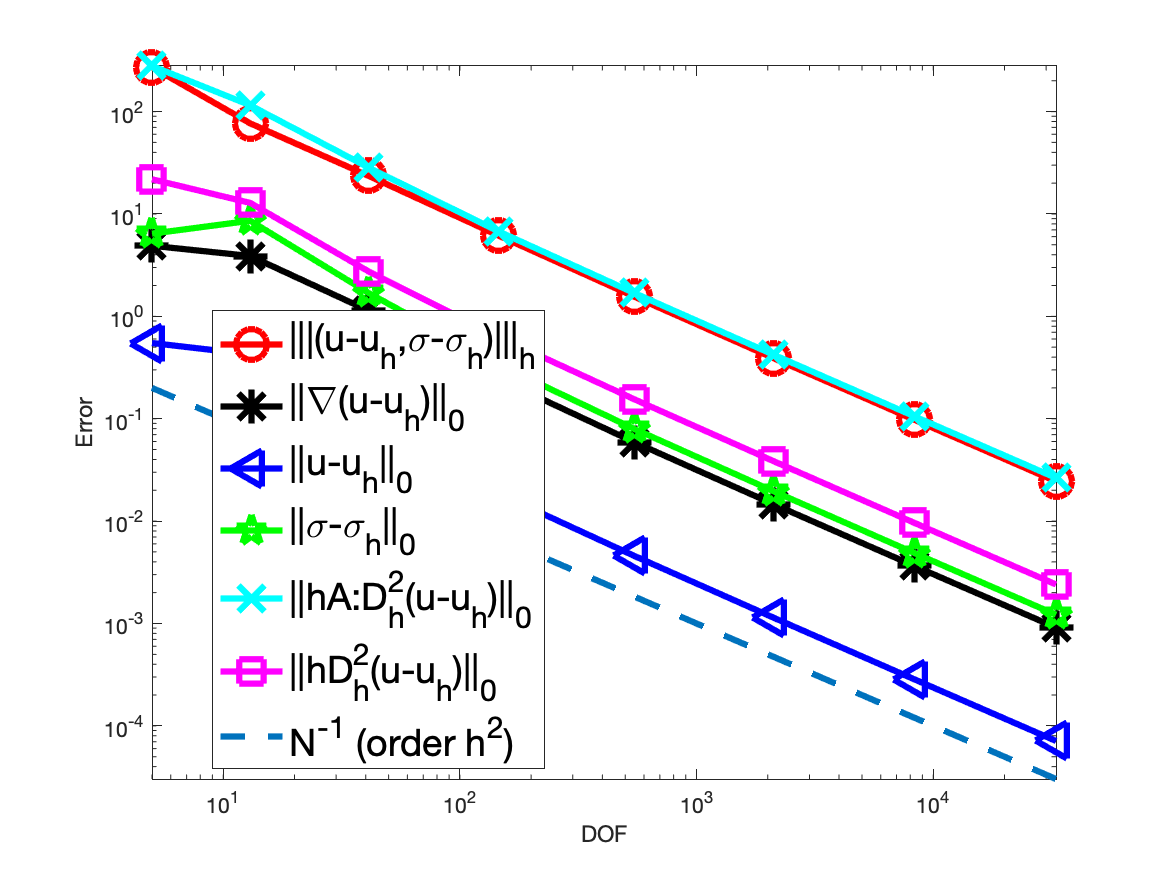}}\\
\subfigure[Discontinuous coefficient $A_3$]{
\includegraphics[width=0.4\linewidth]{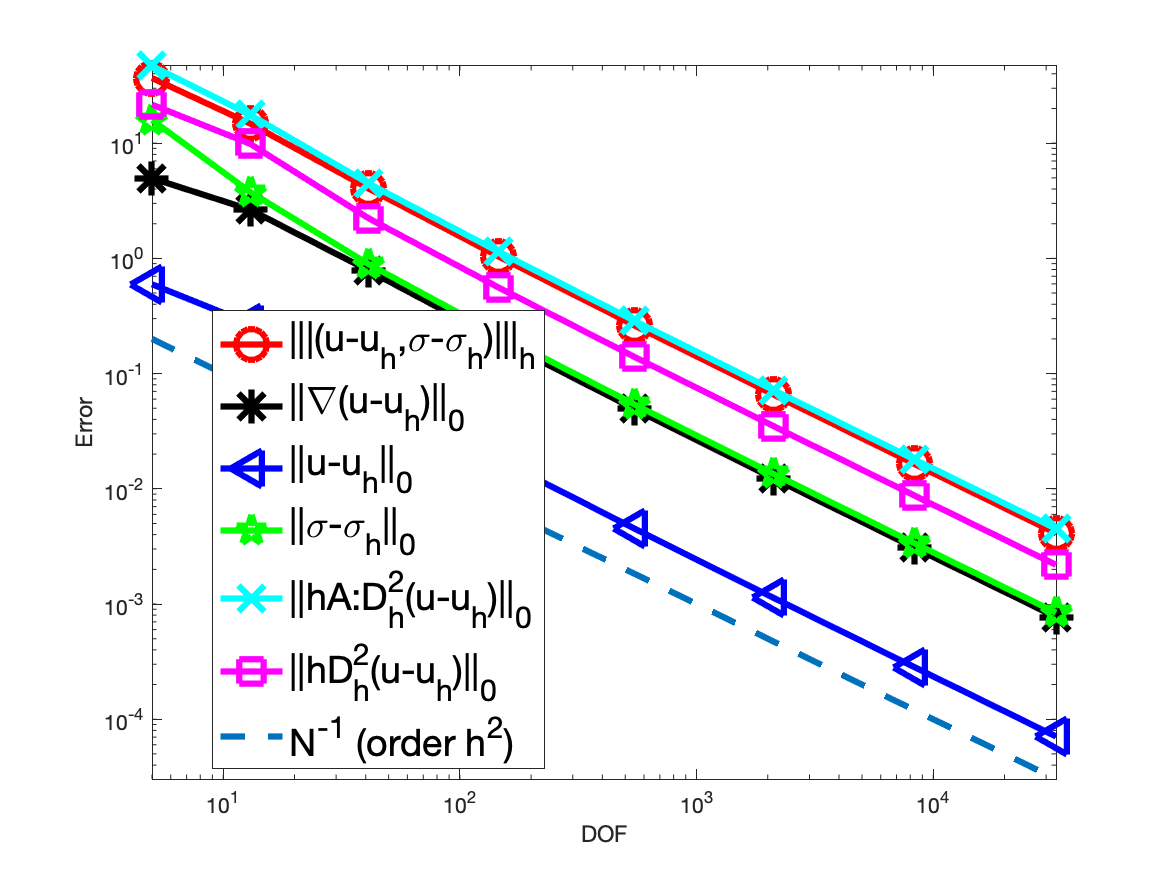}}
\subfigure[Degenerate coefficient $A_4$]{
\includegraphics[width=0.4\linewidth]{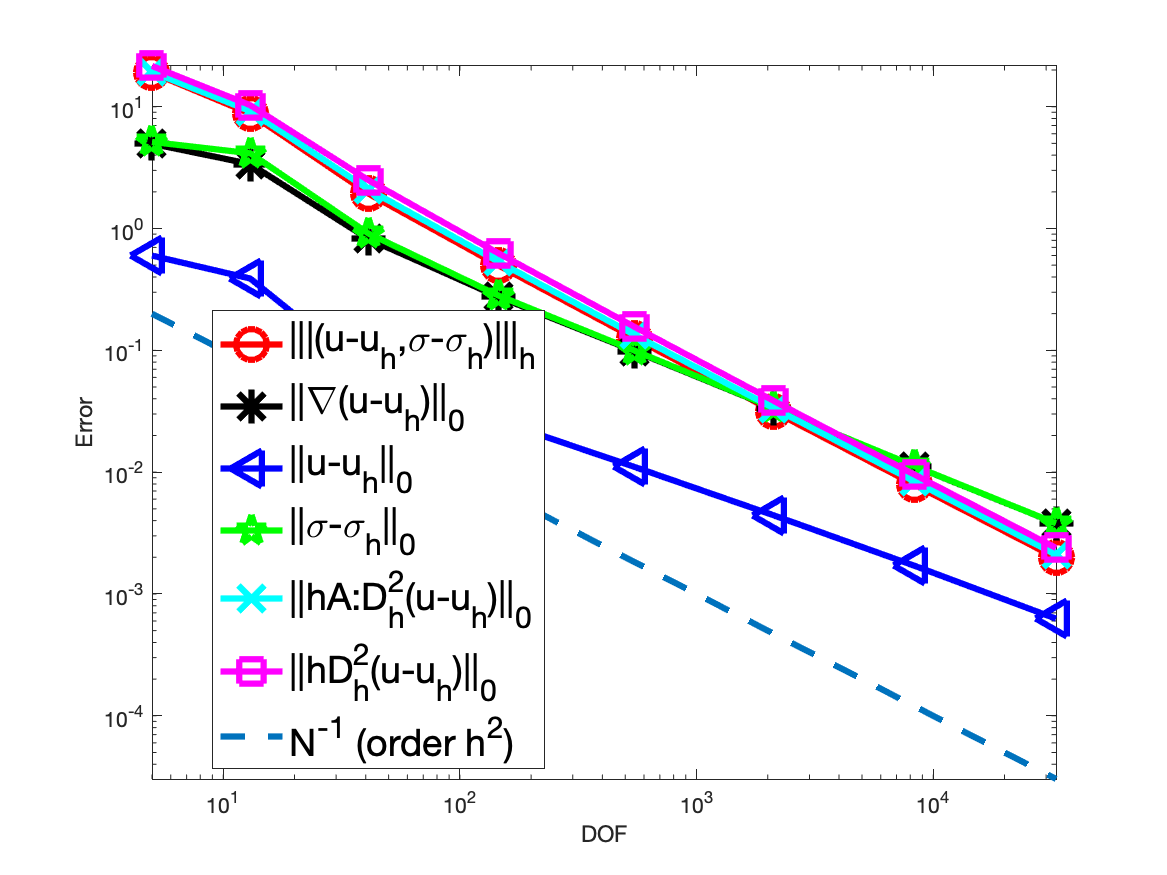}}
%\caption{Convergence histories for $S_{2,0}\times S_1^2$ weighted-LSFEM with a smooth solution}
% \label{con_LSFEM_P2}
%\end{figure}
%
%\begin{figure}[!htb]
%\centering 
%\subfigure{ 
%\includegraphics[width=0.75\linewidth]{convergence_LSFEM0_P1}}
%\hspace{0.01\linewidth}
\subfigure[H\"older continuous coefficient $A_1$]{
\includegraphics[width=0.4\linewidth]{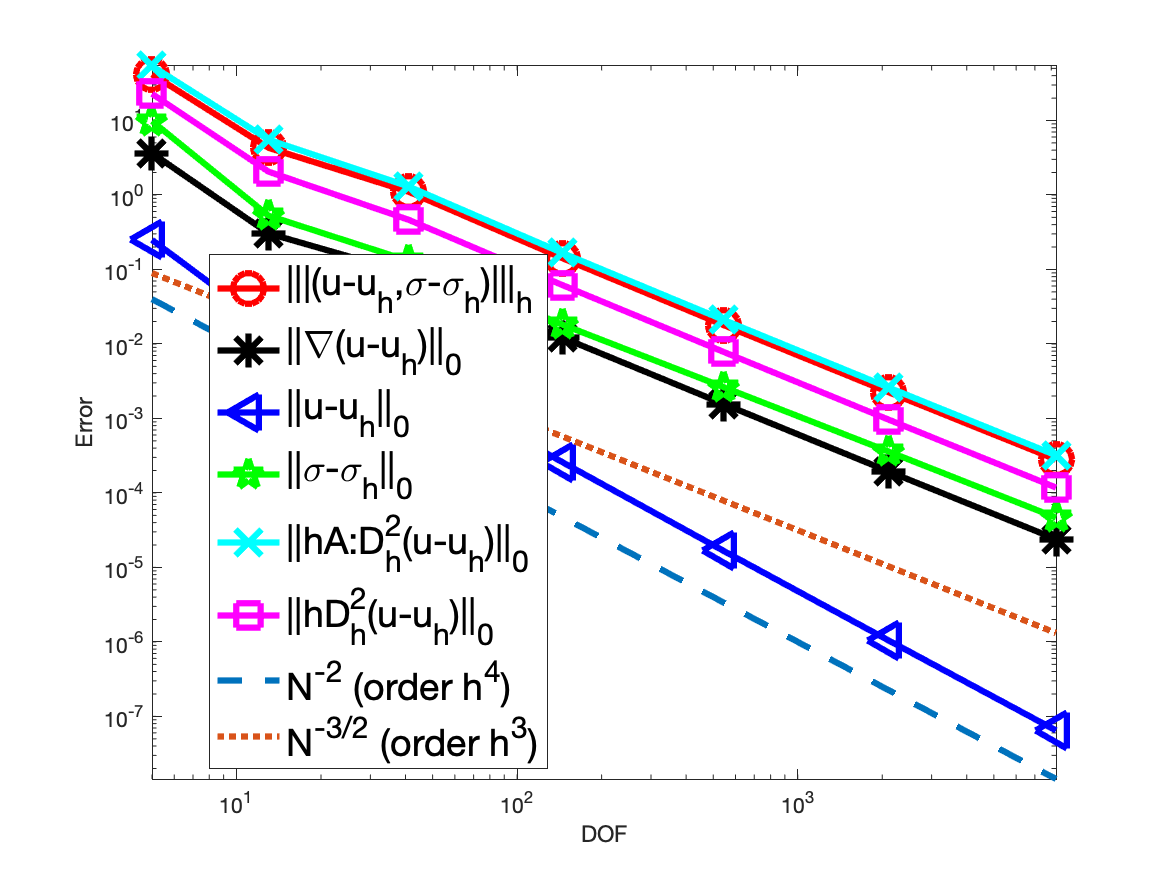}}
\subfigure[Uniformly continuous coefficient $A_2$]{
\includegraphics[width=0.4\linewidth]{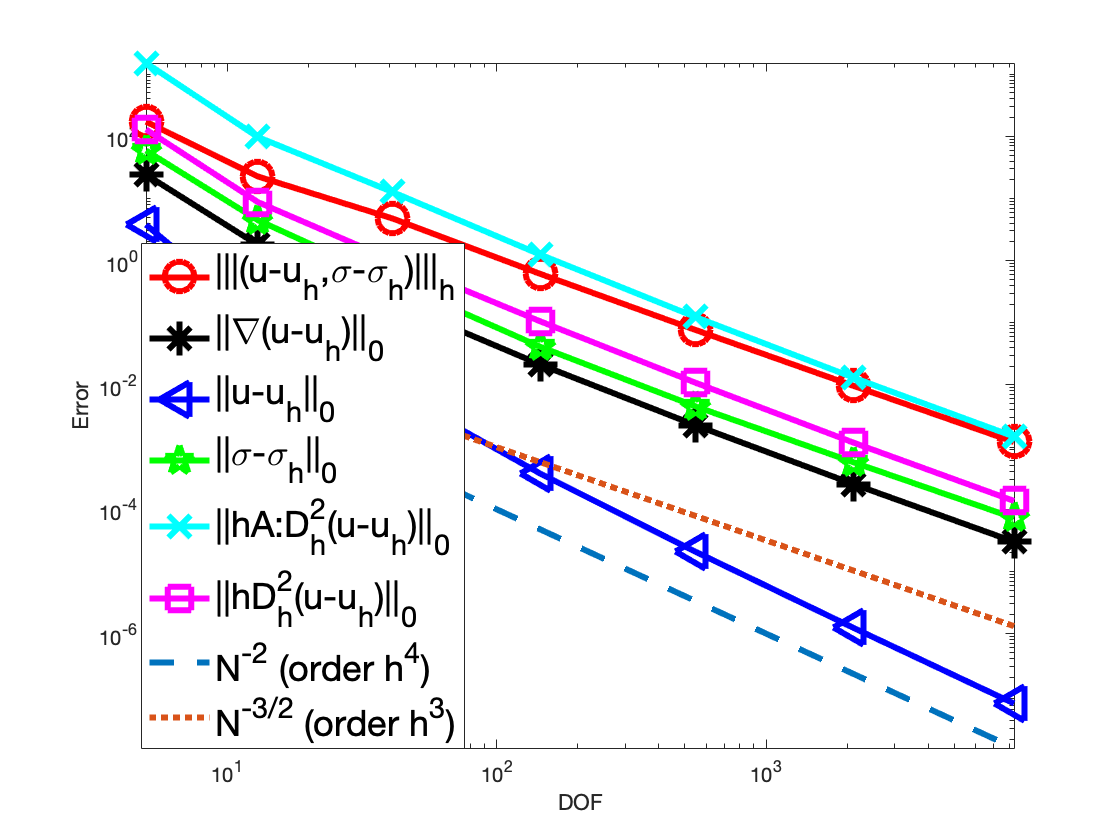}}\\
\subfigure[Discontinuous coefficient $A_3$]{
\includegraphics[width=0.4\linewidth]{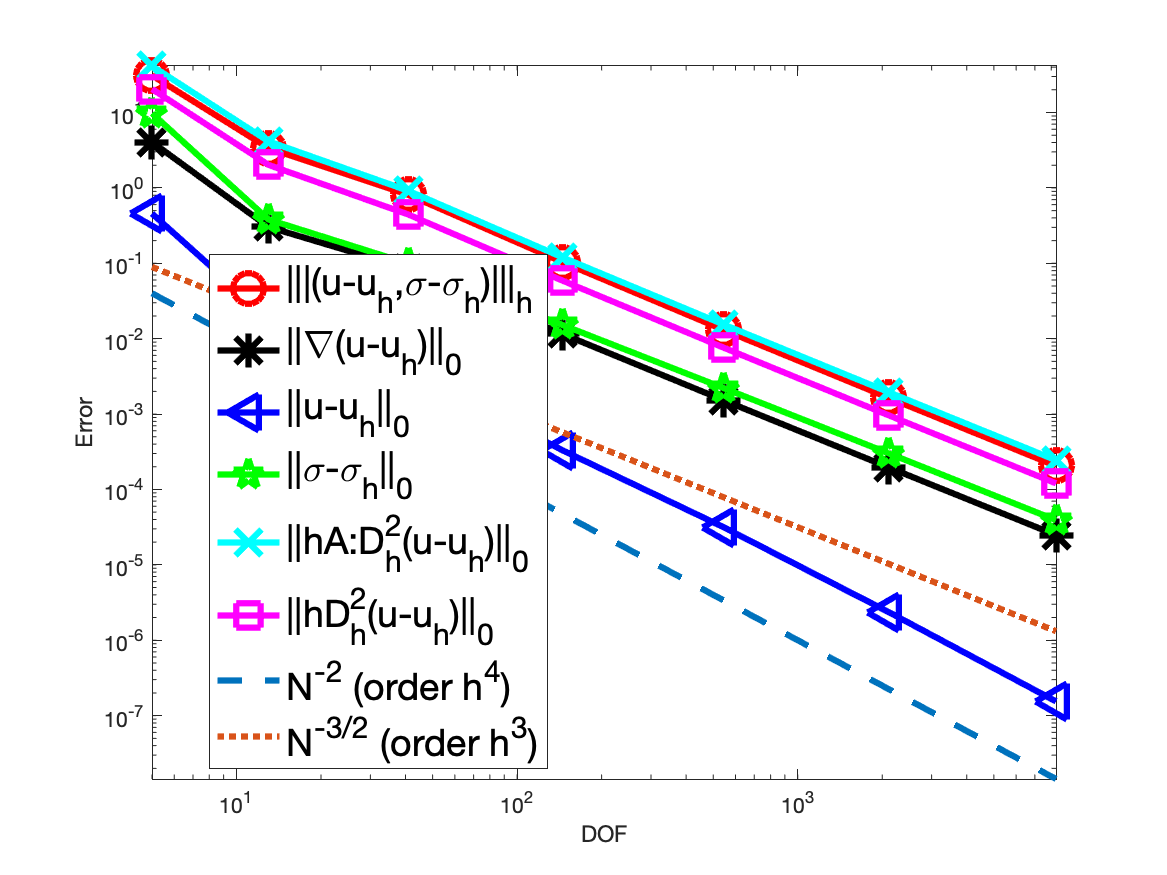}}
\subfigure[Degenerate coefficient $A_4$]{
\includegraphics[width=0.4\linewidth]{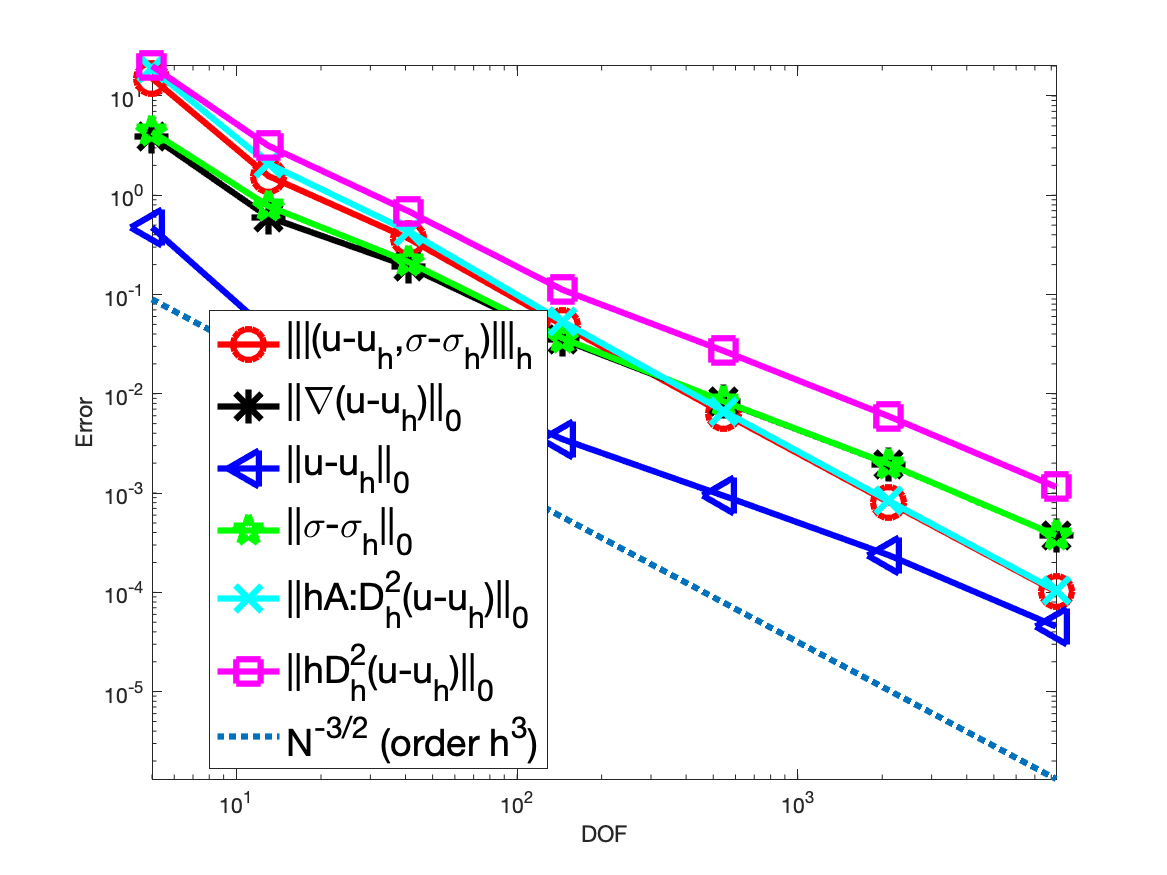}}
\caption{Convergence histories for $S_{k,0}\times S_{k-1}^2$  weighted-LSFEM with a smooth solution, a-d (k=2), e-h(k=3)}
\label{con_LSFEM_P2P3}
\end{figure}

In Fig. \ref{con_LSFEM_P2P3}, we show the numerical results for the smooth solution problems using the-weighted LSFEM with $S_{2,0}\times S_1^2$ and $S_{3,0}\times S_2^2$ approximations. 

As discussed earlier, the error in the LSFEM norm $\tri(u-u_h,\bsigma-\bsigma_h) \tri_h$ and $\|A\ratio D^2_h(u-u_h)\|_0$ are of optimal orders and compatible with the theoretical analysis.

For all non-degenerate cases, $\|\nabla (u-u_h)\|_0$, $\|\bsigma-\bsigma_h\|_0$, $\|u-u_h\|_0$, and 
$\|hD_h^2(u-u_h)\|_0$ are of optimal orders.

For the degenerate case, for $k=2$, $\|\nabla (u-u_h)\|_0 =O(h^{1.5})$ and $\|\bsigma-\bsigma_h\|_0 = O(h^{1.5})$ are less than optimal,  $\|u-u_h\|_0 = O(h^{1.4})$ is even worse than the $H^1$-semi norm error, but $\|hD_h^2(u-u_h)\|_0$ is of optimal order 2. For $k=3$, $\|\nabla (u-u_h)\|_0$, $\|\bsigma-\bsigma_h\|_0$, and $\|u-u_h\|_0$, are of order 2.4, which are less than the optimal order; $\|hD_h^2(u-u_h)\|_0$ is of order $2.3$, which is also less than the optimal order.

In conclusion,  we find that for the degenerate case, $\|\nabla (u-u_h)\|_0$, $\|\bsigma-\bsigma_h\|_0$, $\|u-u_h\|_0$, and $\|hD_h^2(u-u_h)\|_0$ are often worse than optimal order, while the other cases are fine with a smooth solution.
%
%that the error in $L^2$-norm is more sensitive to the smoothness of the coefficient, while the error in $H^1$-norm is less sensitive. The $\|D_h^2(u-u_h)\|_0$ is sensitive to the Cordes condition of $A$.

\subsection{A Discontinuous Coefficients Problem from \cite{SS:13}}
%{\bf Discontinuous coefficients problem:}
Let $\O= (-1,1)^2$ and the coefficient matrix be the discontinuous matrix $A_3$. Choose the righthand side $f$ so that the exact solution is
\beq
u(x,y) =xy (e^{1-|x|}-1)(e^{1-|y|}-1).
\eeq
Note that both the solution $u$ and the gradient $\nabla u$ are piecewise smooth and continuous.

\begin{figure}[!htb]
\centering 
\subfigure[$L^2$-LSFEM with $S_{1,0}\times S_1^2$]{
\includegraphics[width=0.31\linewidth]{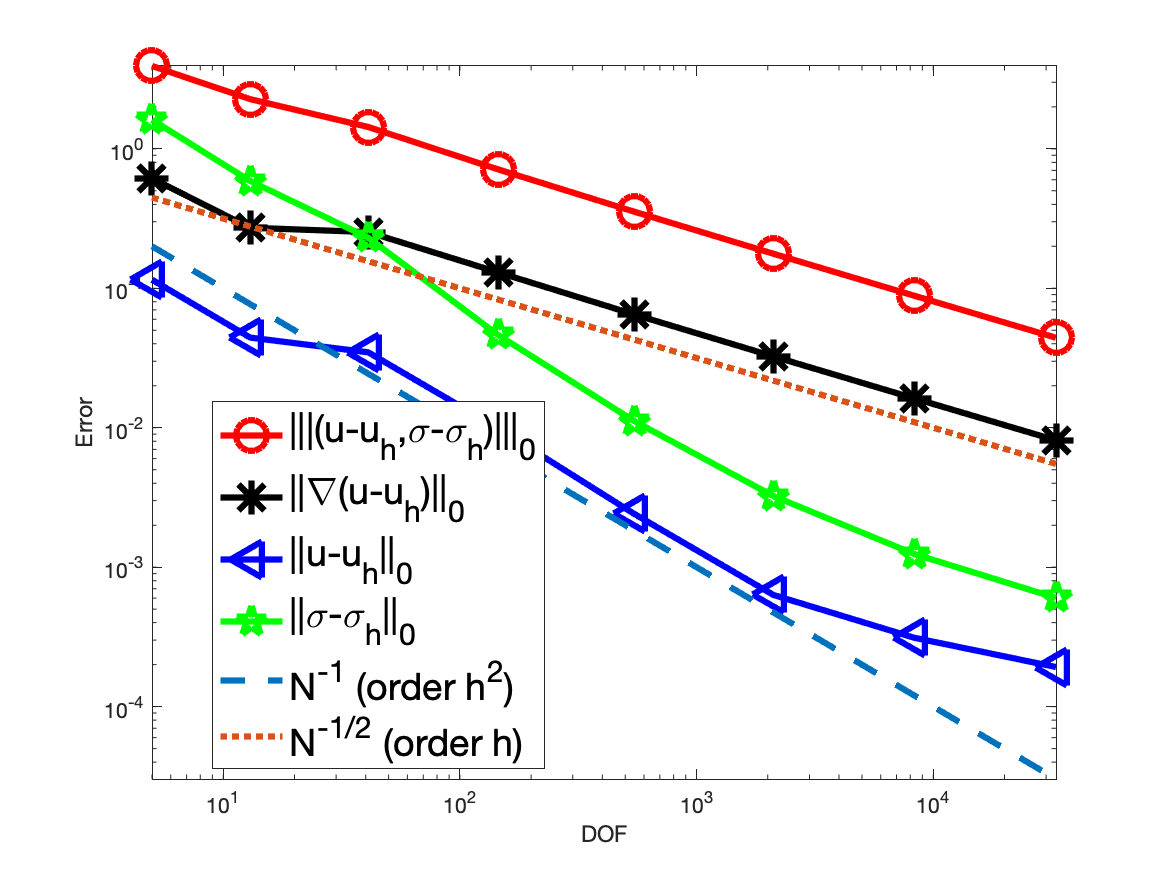}}
\subfigure[Weighted-LSFEM with $S_{2,0}\times S_1^2$]{
\includegraphics[width=0.31\linewidth]{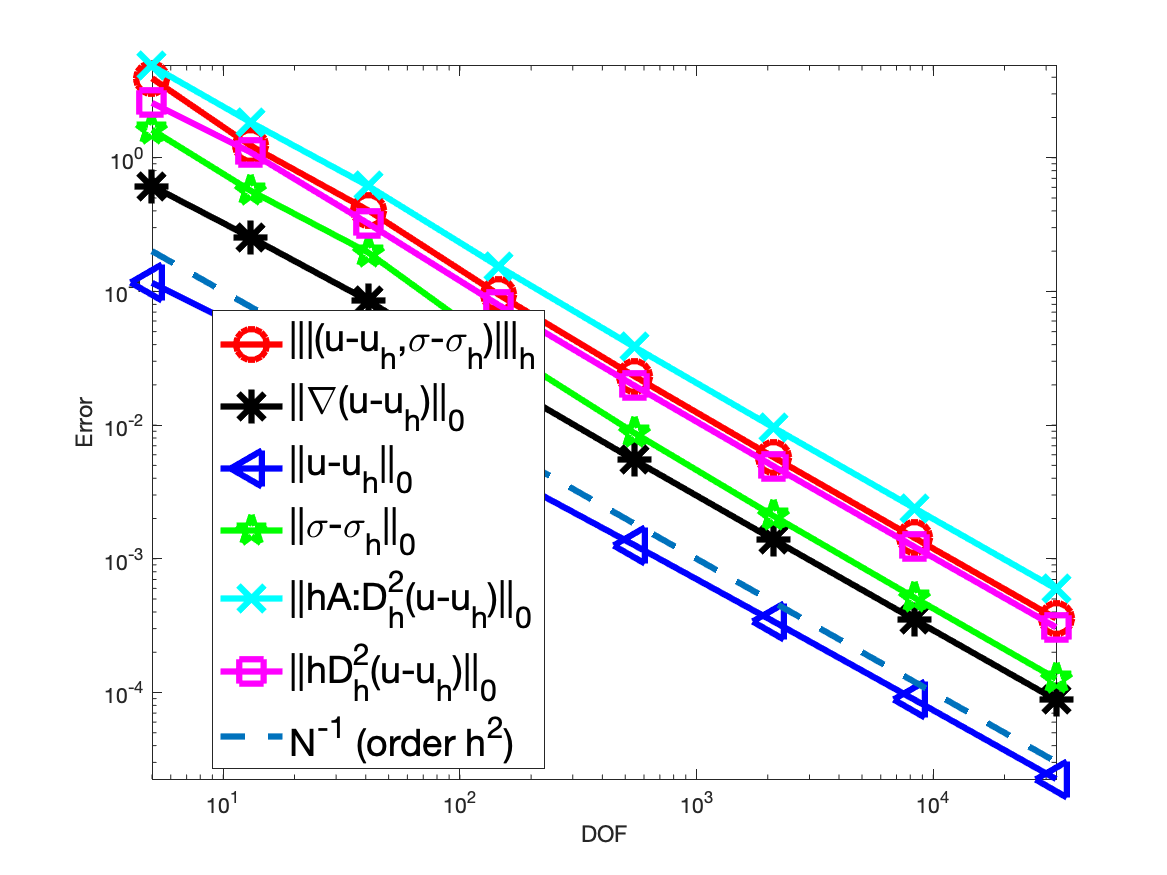}}
\subfigure[Weighted-LSFEM with $S_{3,0}\times S_2^2$]{
\includegraphics[width=0.31\linewidth]{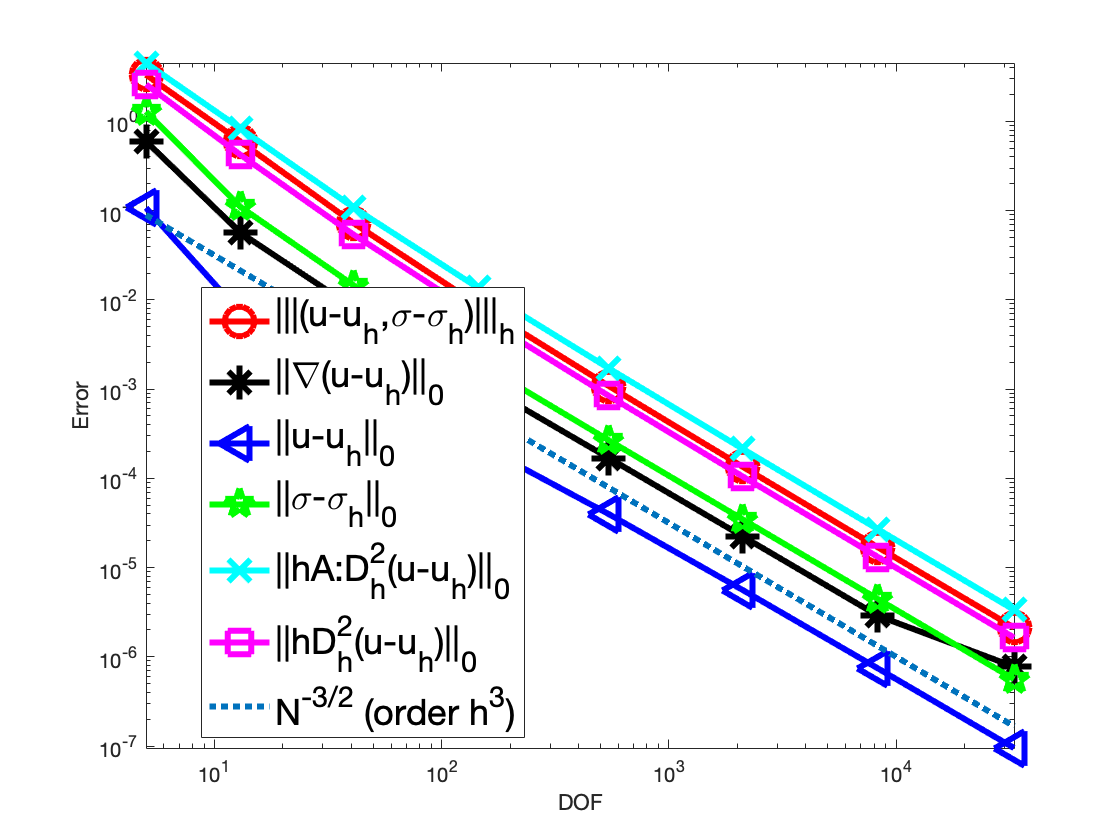}}
\caption{Convergence histories for discontinuous coefficient problem with $u(x,y) = xy (e^{1-|x|}-1)(e^{1-|y|}-1)$}
 \label{con_LSFEM_dis}
\end{figure}

In Fig. \ref{con_LSFEM_dis}, we show  the numerical results with uniform refinements using the $S_{1,0}\times S_1^2$ $L^2$-LSFEM and  $S_{2,0}\times S_1^2$ and $S_{3,0}\times S_2^2$ weighted-LSFEM.

As examples before, $\tri(u-u_h,\bsigma-\bsigma_h) \tri$ and $\|\nabla (u-u_h)\|_0$  
are optimal and compatible with the theoretical analysis.

Due to the discontinuity in the coefficients, the error of $u$ in $L^2$ norm $\|\nabla (u-u_h)\|_0$ behaves differently for different cases. The $S_{1,0}\times S_1^2$ $L^2$-LSFEM has order two at the beginning but reduces to order one later. The $S_{2,0}\times S_1^2$ weighted-LSFEM has a consistent order two, which is optimal. The $S_{3,0}\times S_2^2$ weighted-LSFEM has order three, which loses an order compared to the smooth coefficient case. 

For other norms, the convergence orders are the same as the optimal ones for smooth problems. 

In a summary, for this piecewise smooth solution example with discontinuous coefficients, excpet for the $L^2$-error of the $S_{3,0}\times S_2^2$ weighted-LSFEM,  we observe convergence results very similar to the global smooth solution example. 

%\subsection{Problems with a singular solution}

\subsection{A singular solution example from \cite{FHN:17}}
Let the coefficient matrix $A=A_2$ and $\O = (0,1/2)^2$. The exact solution is chosen to be
$$
u = (x^2+y^2)^{7/8}.
$$
The solution is not in $H^2$ and has a singularity at the origin.

\begin{figure}[!htb]
\centering 
\subfigure[$L^2$-LSFEM with $S_{1,0}\times S_1^2$]{
\includegraphics[width=0.31\linewidth]{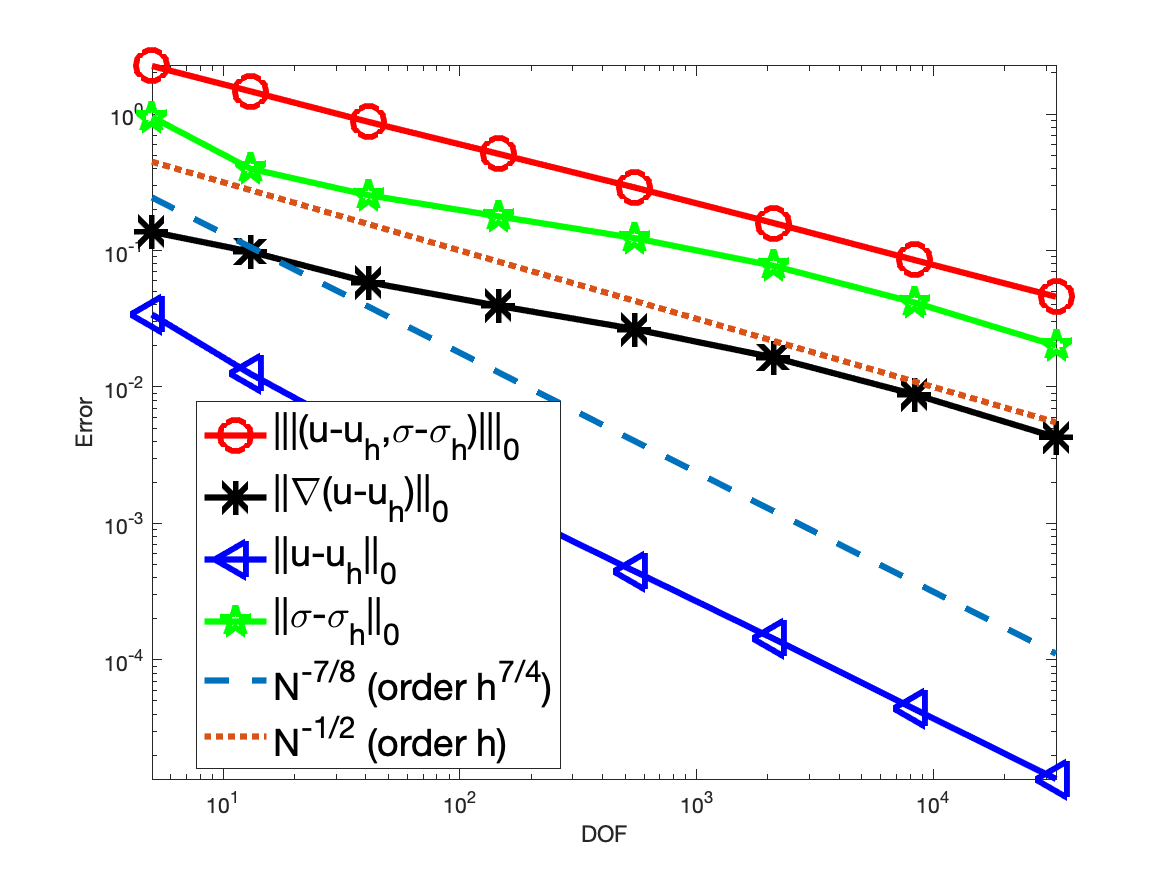}}
\subfigure[Weighted-LSFEM with $S_{2,0}\times S_1^2$]{
\includegraphics[width=0.31\linewidth]{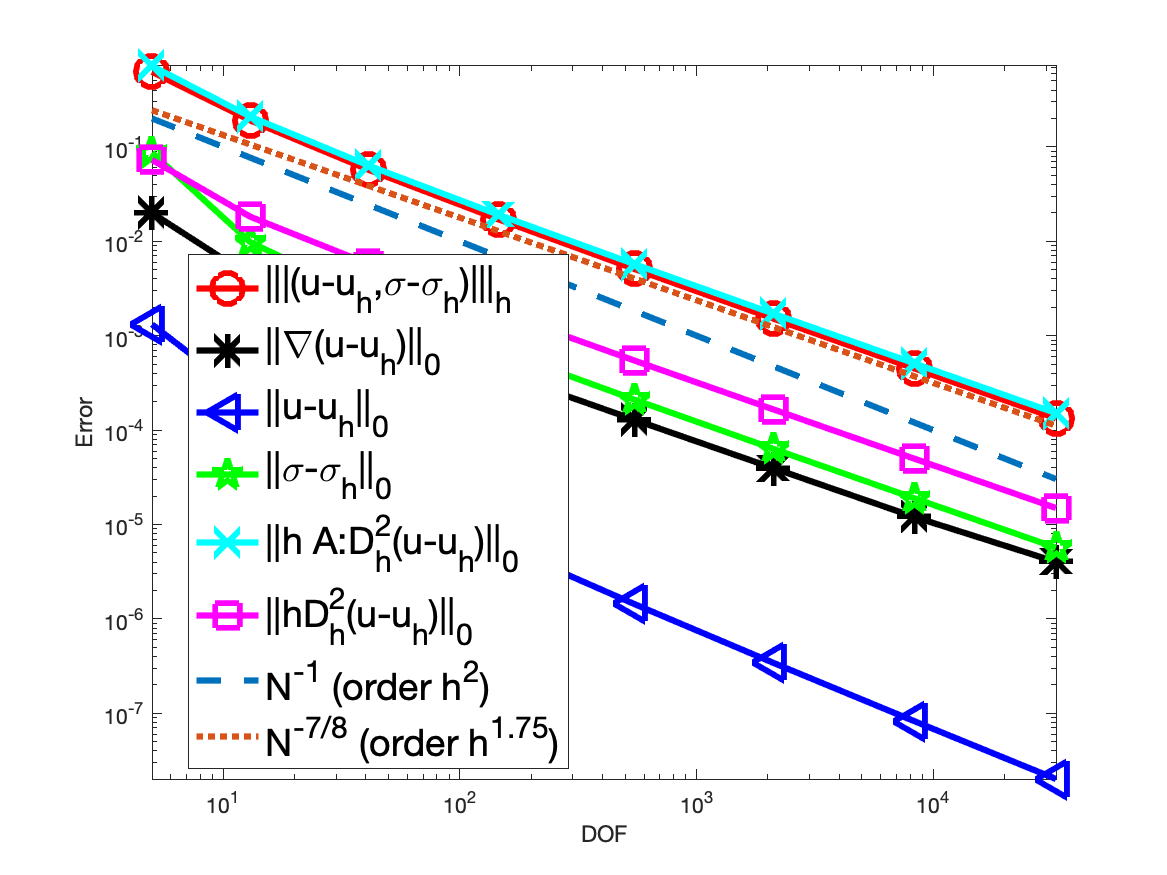}}
\subfigure[Weighted-LSFEM with $S_{3,0}\times S_2^2$]{
\includegraphics[width=0.31\linewidth]{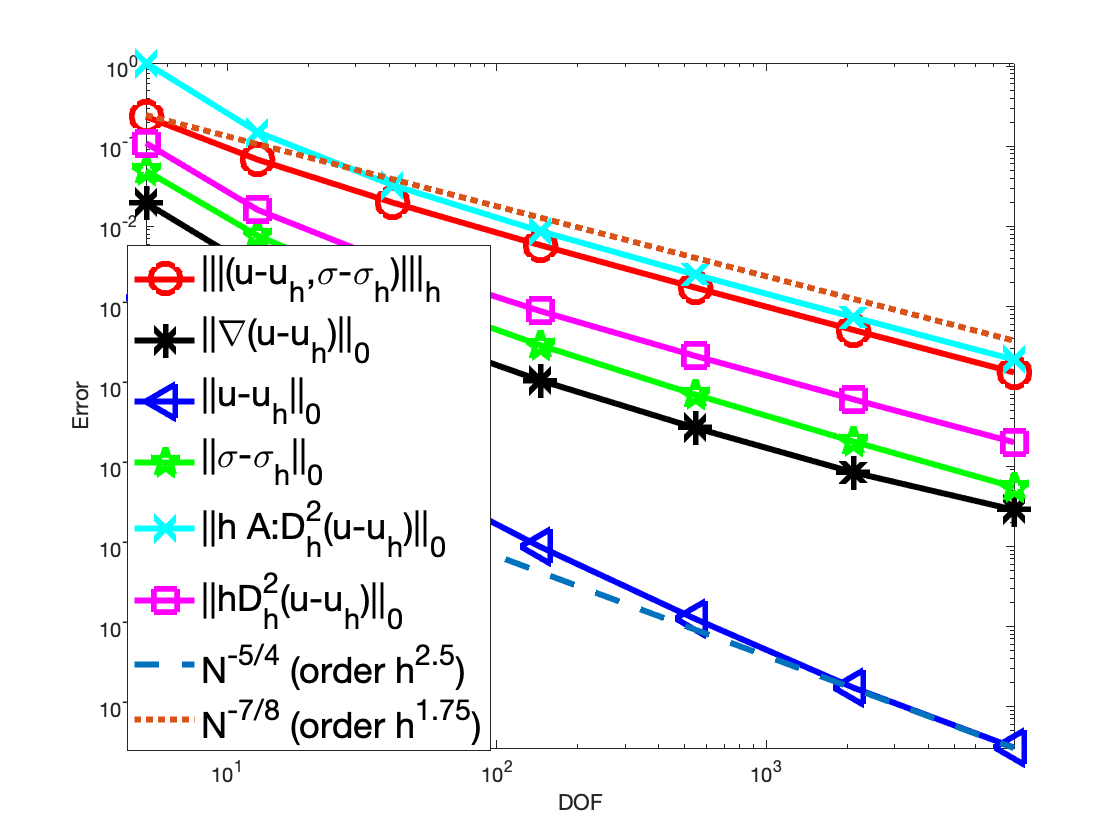}}
\caption{Convergence histories for uniformly continuous coefficient problem with $u = (x^2+y^2)^{7/8}$ with uniform mesh refinements}
 \label{con_LSFEM_uniform_x74}
\end{figure}

In Fig. \ref{con_LSFEM_uniform_x74}, we show  the numerical results with uniform refinements using the $S_{1,0}\times S_1^2$ $L^2$-LSFEM and  $S_{2,0}\times S_1^2$ and $S_{3,0}\times S_2^2$ weighted-LSFEM. 

For the $S_{1,0}\times S_1^2$ $L^2$-LSFEM, $\tri(u-u_h,\bsigma-\bsigma_h) \tri_0$, $\|\nabla(u-u_h)\|_0$, and $\|\bsigma-\bsigma_h\|_0$ converge at an order close to $0.9$, and $\|u-u_h\|_0$ converges around an order of $1.7$. These are due to the singularity.

The weighted-LSFEMs convergence at an order $7/4$ for $\tri(u-u_h,\bsigma-\bsigma_h) \tri$,  $\|h:A D_h^2 (u-u_h)\|_0$, and $\|h:D_h^2 (u-u_h)\|_0$, which is the optimal order with respect to the regularity. The errors  $\|\nabla (u-u_h)\|_0$ and $\|\bsigma-\bsigma_h\|_0$ converge at a rate $1.6$. The error of $u$ in $L^2$ norm $\|u-u_h\|_0$ is close to $2$ for the $S_{2,0}\times S_1^2$ weighted-LSFEM, while the $S_{3,0}\times S_2^2$ weighted-LSFEM has an order $2.5$,

\begin{figure}[!htb]
\centering 
\subfigure[$L^2$-LSFEM with $S_{1,0}\times S_1^2$]{
\includegraphics[width=0.41\linewidth]{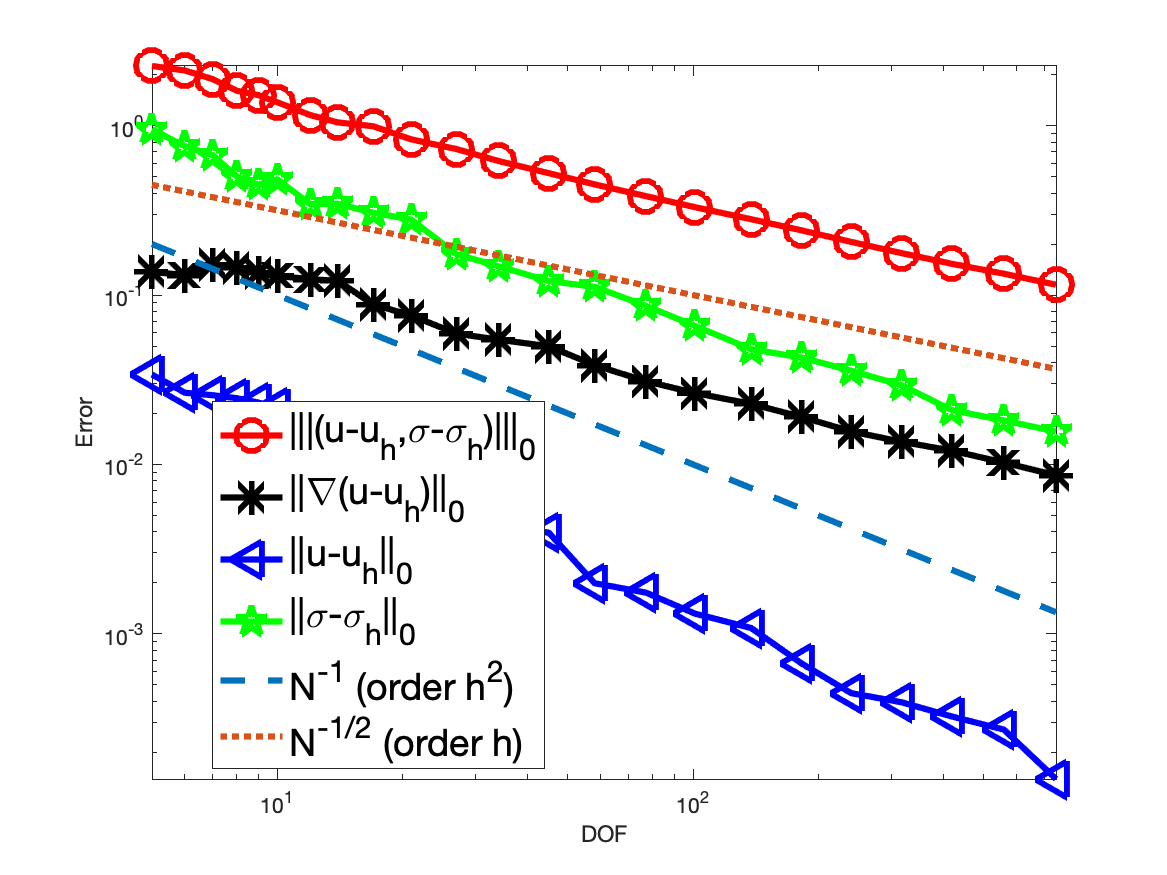}}
\subfigure[Weighted-LSFEM with $S_{2,0}\times S_1^2$]{
\includegraphics[width=0.41\linewidth]{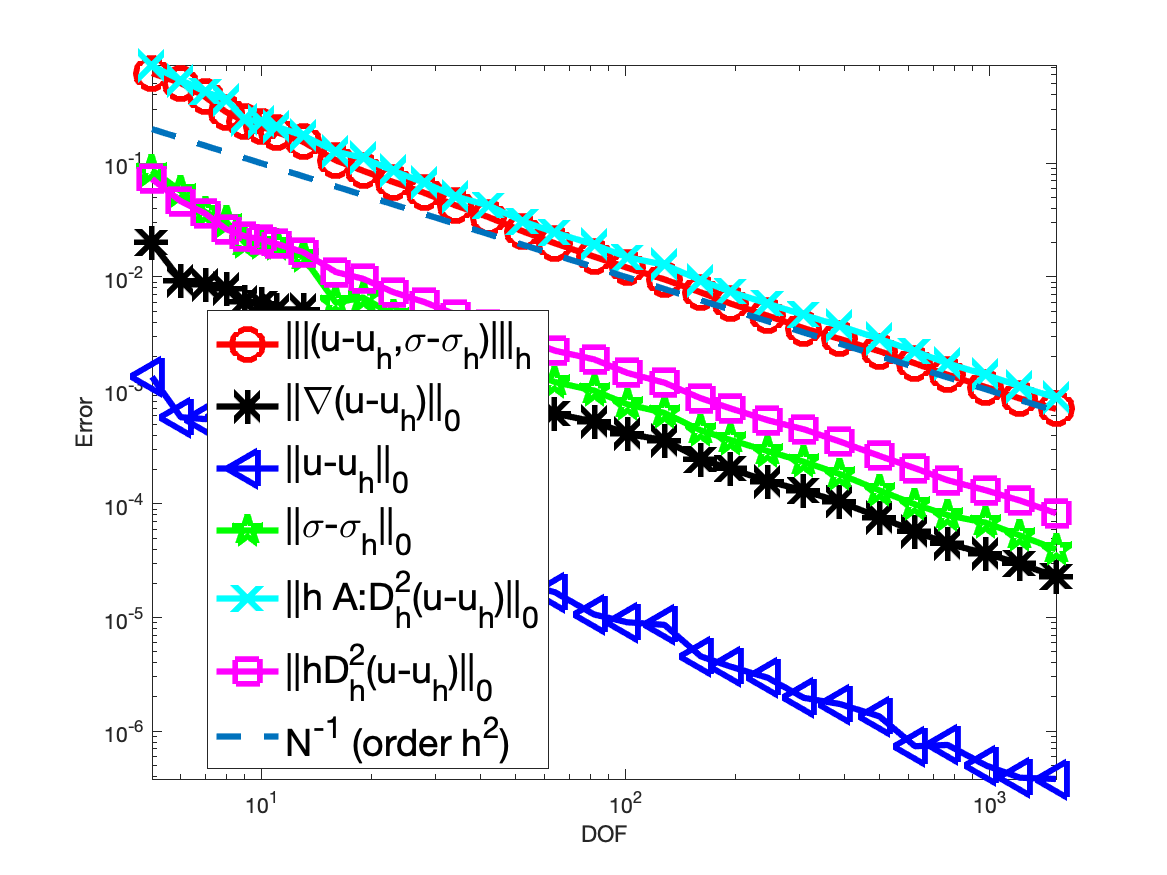}}
\subfigure[Weighted-LSFEM with $S_{3,0}\times S_2^2$]{
\includegraphics[width=0.41\linewidth]{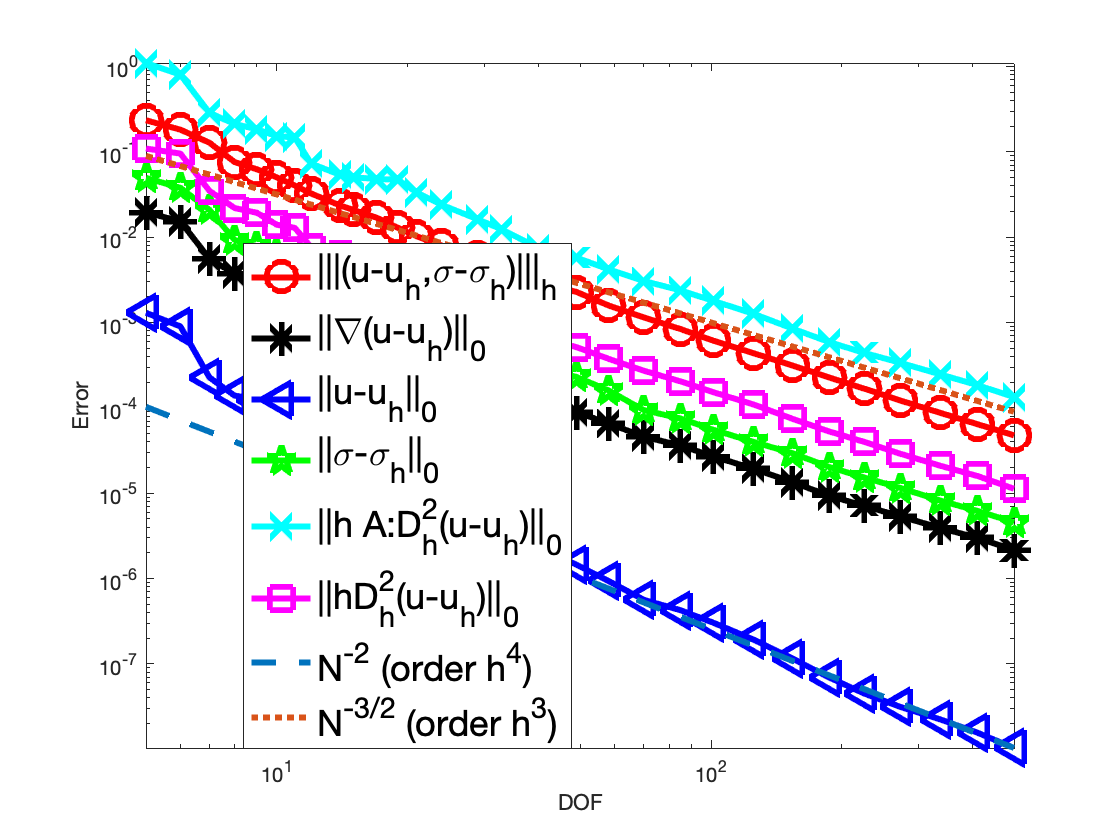}}
\subfigure[A refined mesh generated by adaptive $L^2$-LSFEM]{
\includegraphics[width=0.41\linewidth]{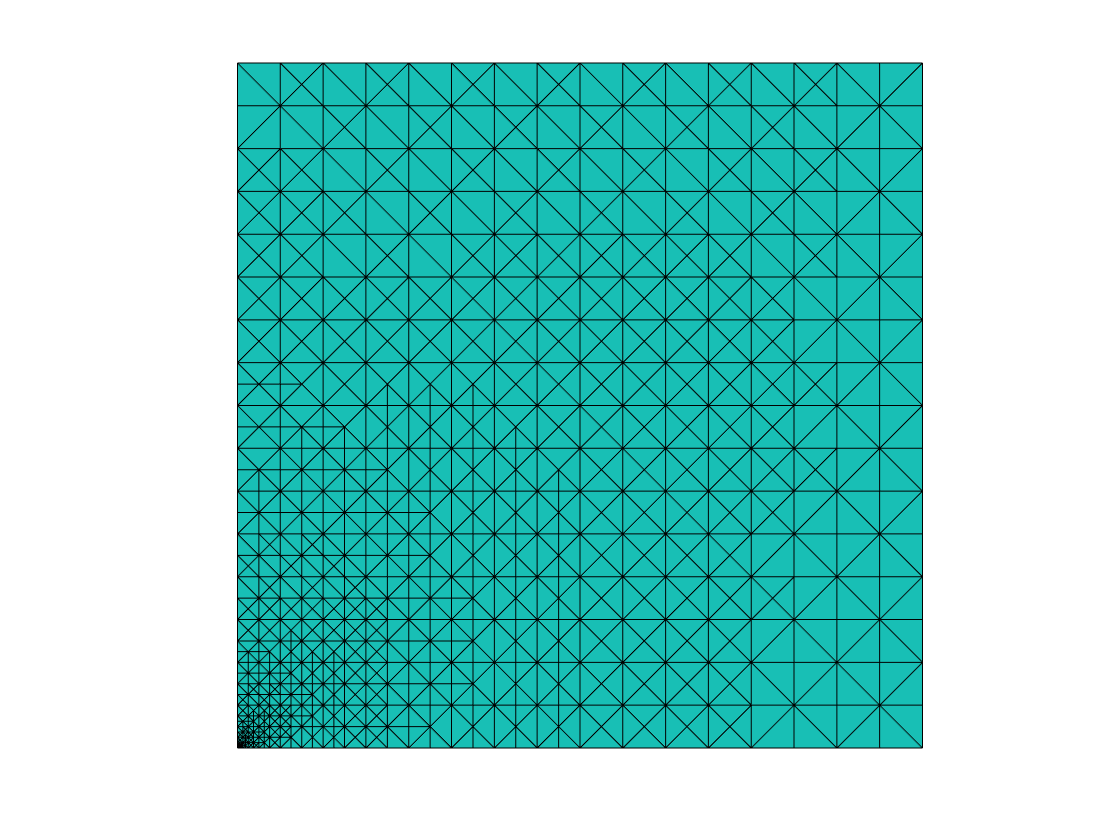}}
\caption{Convergence histories for uniformly continuous coefficient problem with $u = (x^2+y^2)^{7/8}$ with adaptive mesh refinements}
 \label{con_LSFEM_uniform_x74_afem}
\end{figure}

On the (a), (b), and (c) of Fig. \ref{con_LSFEM_uniform_x74_afem}, we show  the numerical results with adaptive refinements using the adaptive $S_{1,0}\times S_1^2$ $L^2$-LSFEM and  $S_{2,0}\times S_1^2$ and $S_{3,0}\times S_2^2$ weighted-LSFEMs. Not surprisingly, all convergences orders behave as if the solution and the matrix are smooth. On the (d) of Fig. \ref{con_LSFEM_uniform_x74_afem}, we show a refined mesh generated by adaptive $L^2$-LSFEM, we clearly see the refinements around the singularity. 

%\newpage
\subsection{A singular solution example with a degenerate matrix from \cite{FHN:17}}
Let the coefficient matrix $A=A_4$ and $\O = (0,1)^2$. The righthand side $f$ is $0$ and exact solution is chosen to be
$$
u = x^{4/3}-y^{4/3}.
$$
The solution is singular along the $x$ and $y$ axises. 

\begin{figure}[!htb]
\centering 
\subfigure[$L^2$-LSFEM with $S_{1,0}\times S_1^2$]{
\includegraphics[width=0.31\linewidth]{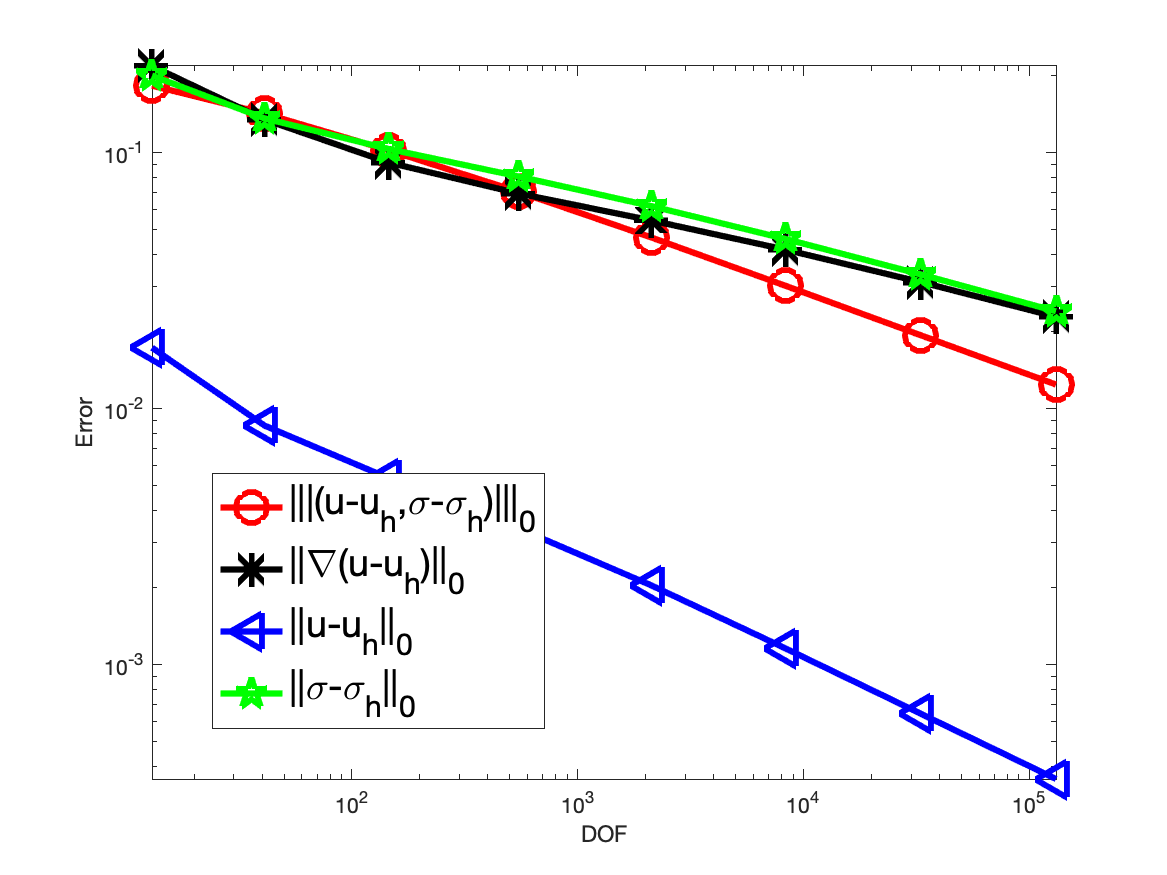}}
\subfigure[Weighted-LSFEM with $S_{2,0}\times S_1^2$]{
\includegraphics[width=0.31\linewidth]{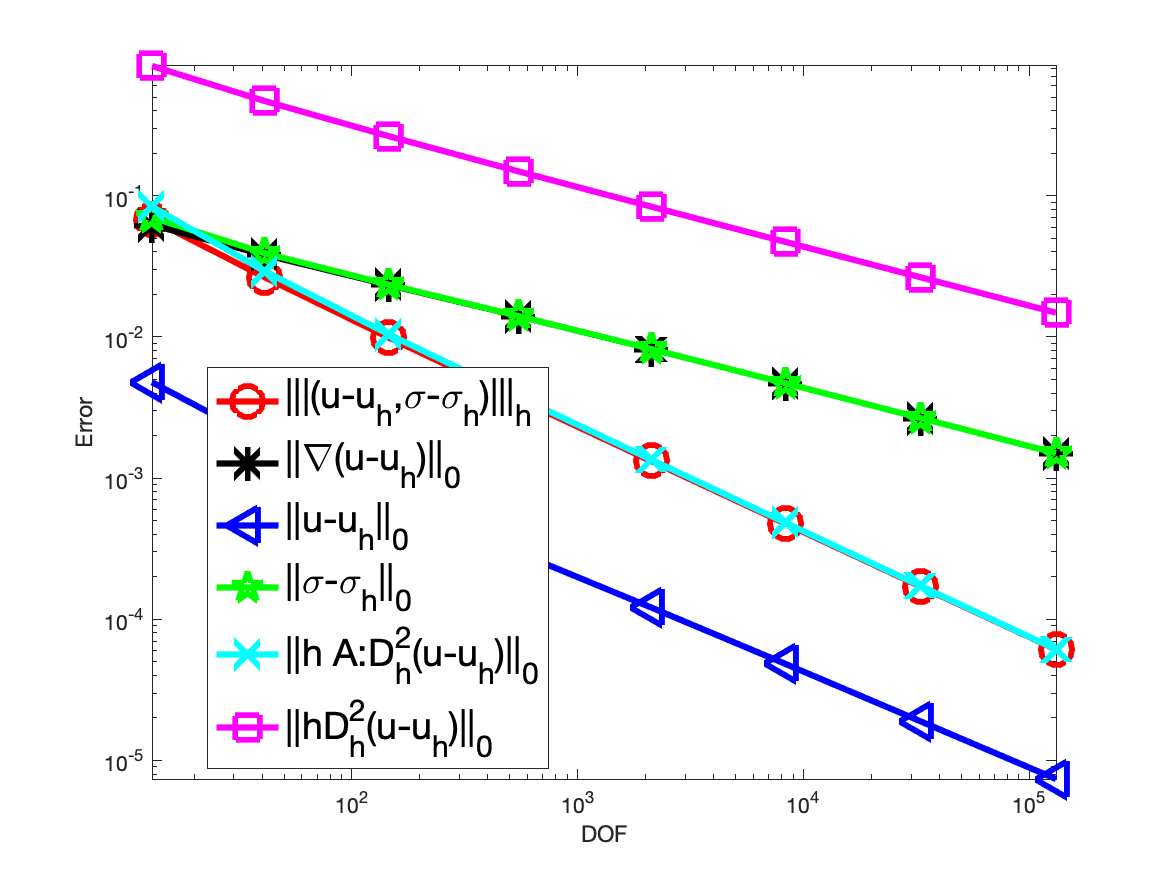}}
\subfigure[Weighted-LSFEM with $S_{3,0}\times S_2^2$]{
\includegraphics[width=0.31\linewidth]{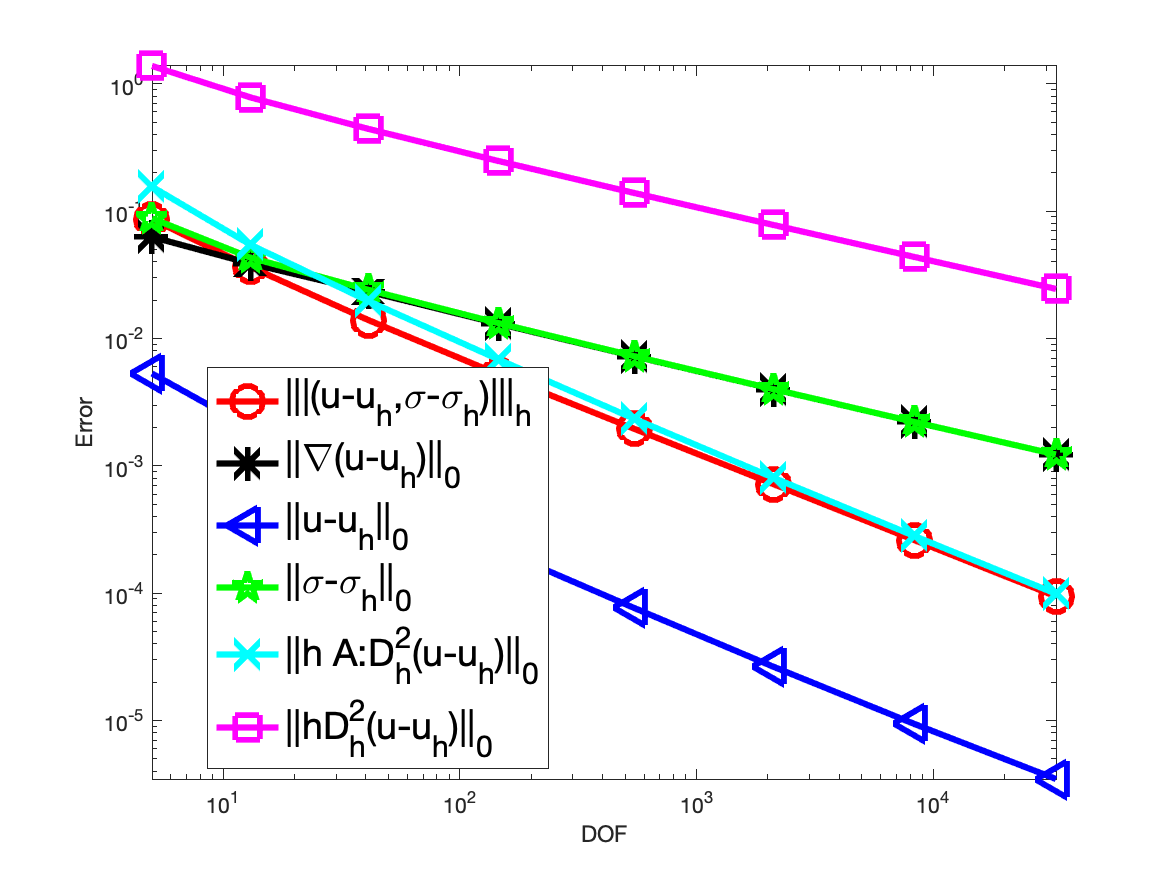}}
\caption{Convergence histories for a degenerate coefficient problem with $u =x^{4/3}-y^{4/3}$}
 \label{con_LSFEM_deg_x43}
\end{figure}

In Fig. \ref{con_LSFEM_deg_x43}, we show  the numerical results with uniform refinements using the $S_{1,0}\times S_1^2$ $L^2$-LSFEM and  $S_{2,0}\times S_1^2$ and $S_{3,0}\times S_2^2$ weighted-LSFEM. 

For the $S_{1,0}\times S_1^2$ $L^2$-LSFEM, $\tri(u-u_h,\bsigma-\bsigma_h) \tri_0$ has an order $0.63$, $\|\nabla (u-u_h)\|_0$ $\|\bsigma-\bsigma_h\|_0$ have an order $0.45$,  and $\|u-u_h\|_0$ has an order $0.85$.

The weighted LSFEMs have a convergence order $1.5$ for $\tri(u-u_h,\bsigma-\bsigma_h) \tri_h$ and $\|hA\ratio D_h^2(u-u_h)\|_0$, a convergence order $0.84$ for $\|\nabla (u-u_h)\|_0$ and $\|\nabla (u-u_h)\|_0$, and a convergence order $1.4$ for $\|u-u_h\|_0$. This again suggests that for the low regularity problem with uniformly mesh refinements, the higher order method is unnecessary for the weighted-LSFEM. 

Not surprisingly, due to the degenerate nature of $A$, $\|hD_h^2(u-u_h)\|_0$ is only of order $0.83$, which is much worse than order $1.5$ of $\|hA\ratio D_h^2(u-u_h)\|_0$. 

\begin{figure}[!htb]
\centering 
\subfigure[$L^2$-LSFEM with $S_{1,0}\times S_1^2$]{
\includegraphics[width=0.41\linewidth]{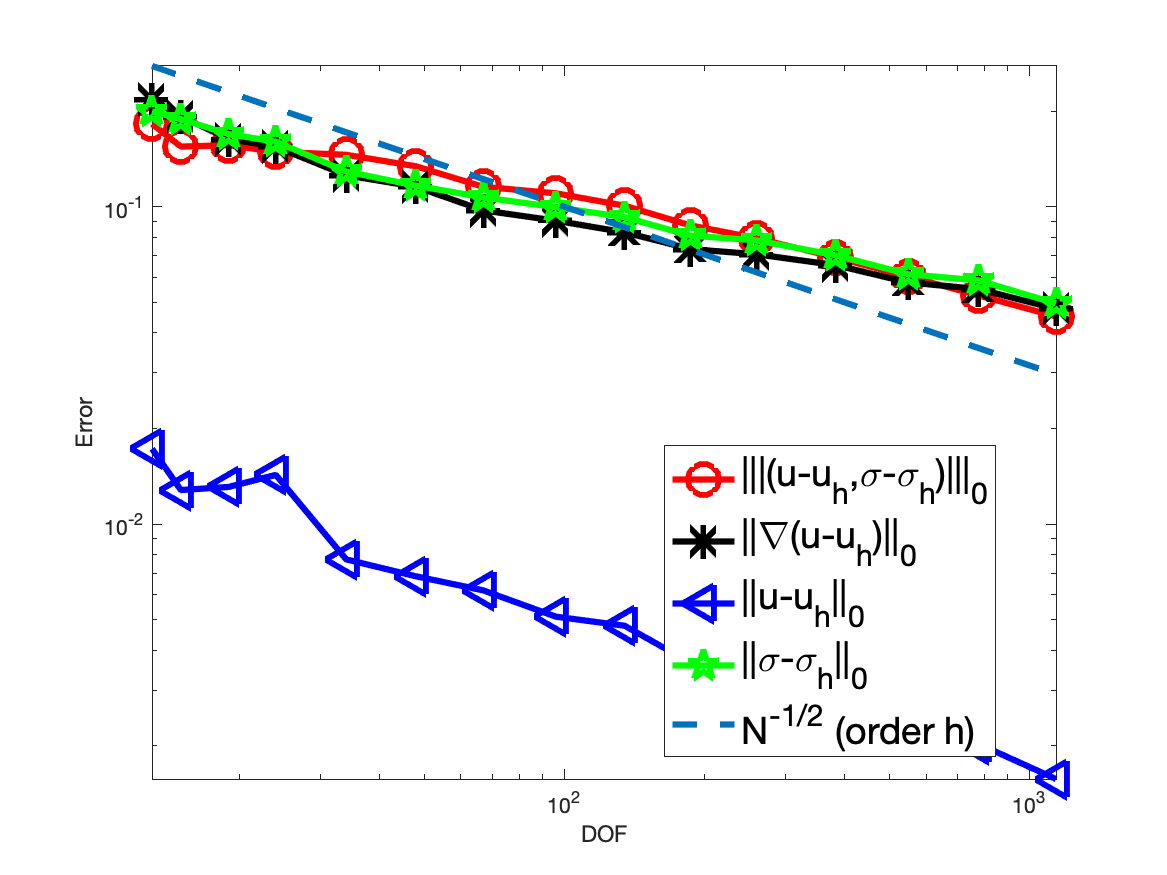}}
\subfigure[Weighted-LSFEM with $S_{2,0}\times S_1^2$]{
\includegraphics[width=0.41\linewidth]{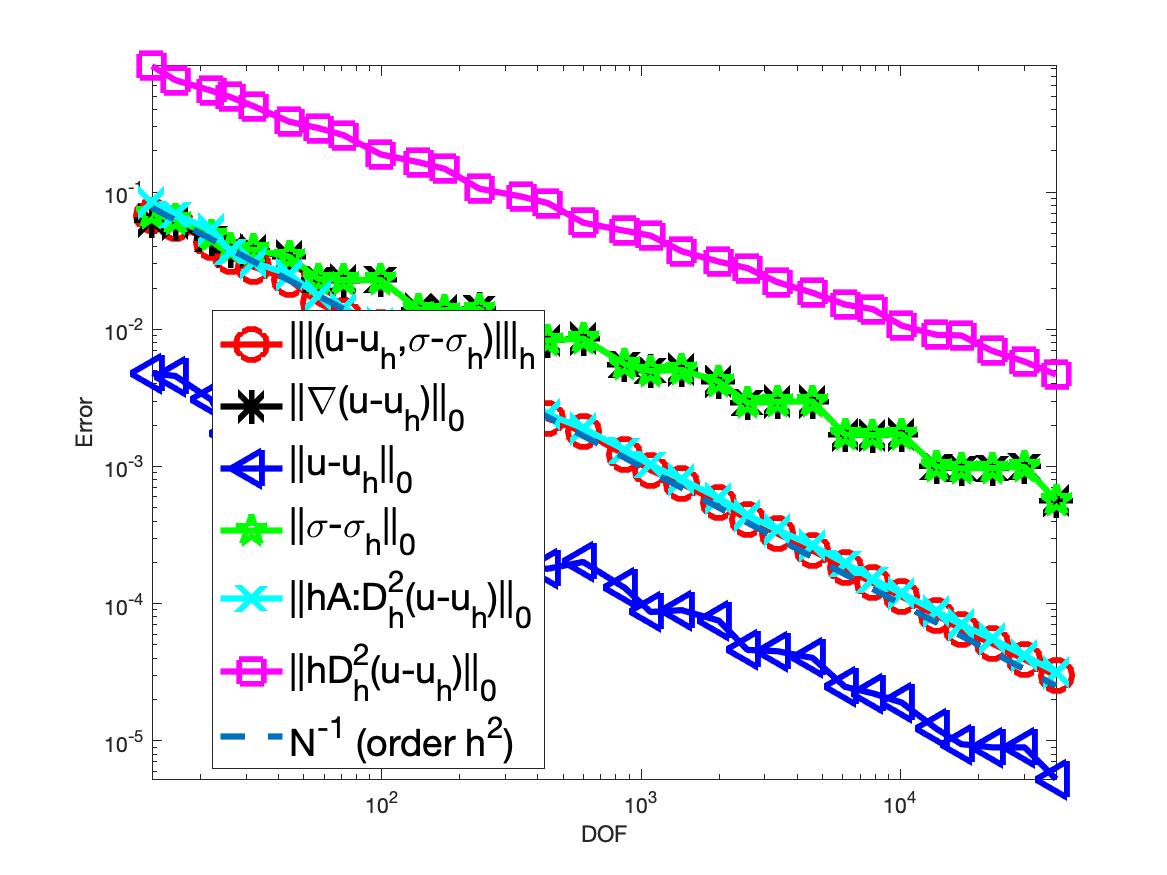}}
\subfigure[Weighted-LSFEM with $S_{3,0}\times S_2^2$]{
\includegraphics[width=0.41\linewidth]{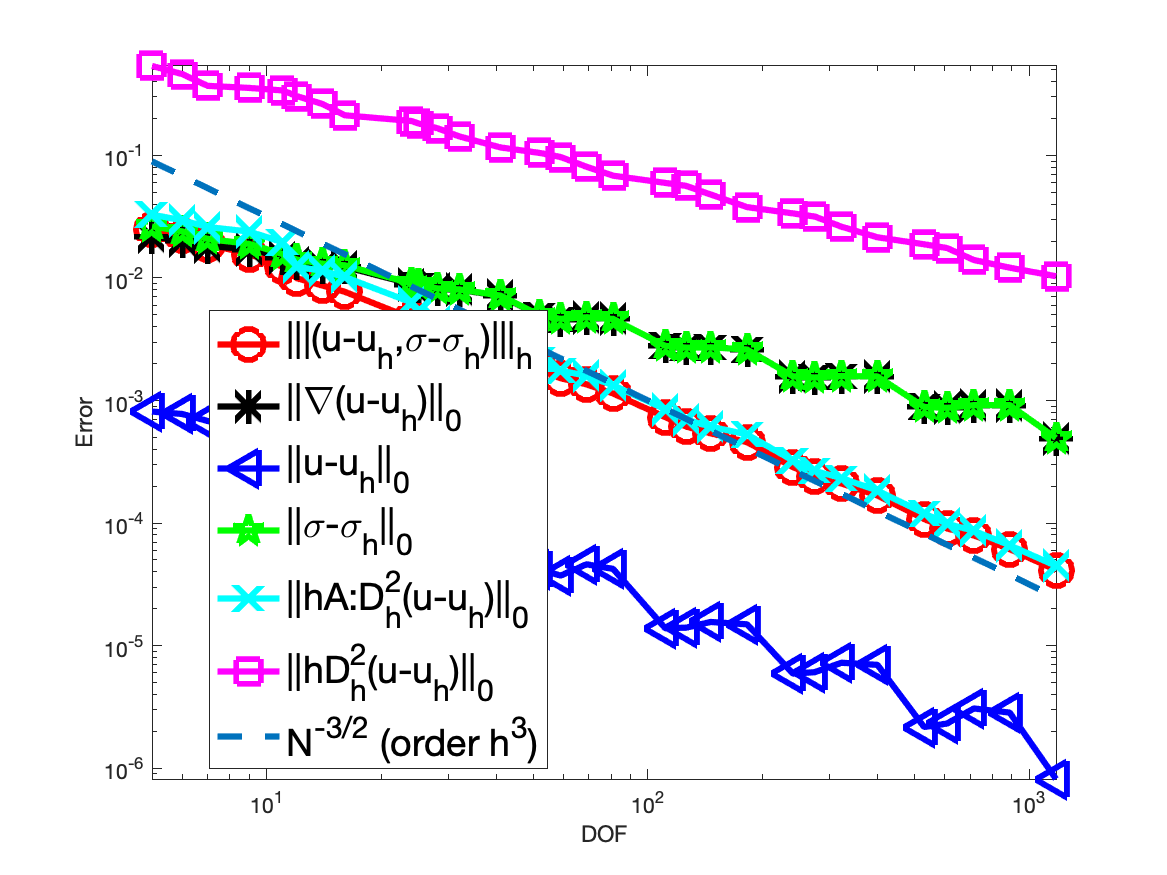}}
\subfigure[A refined mesh generated by adaptive $L^2$-LSFEM ]{
\includegraphics[width=0.41\linewidth]{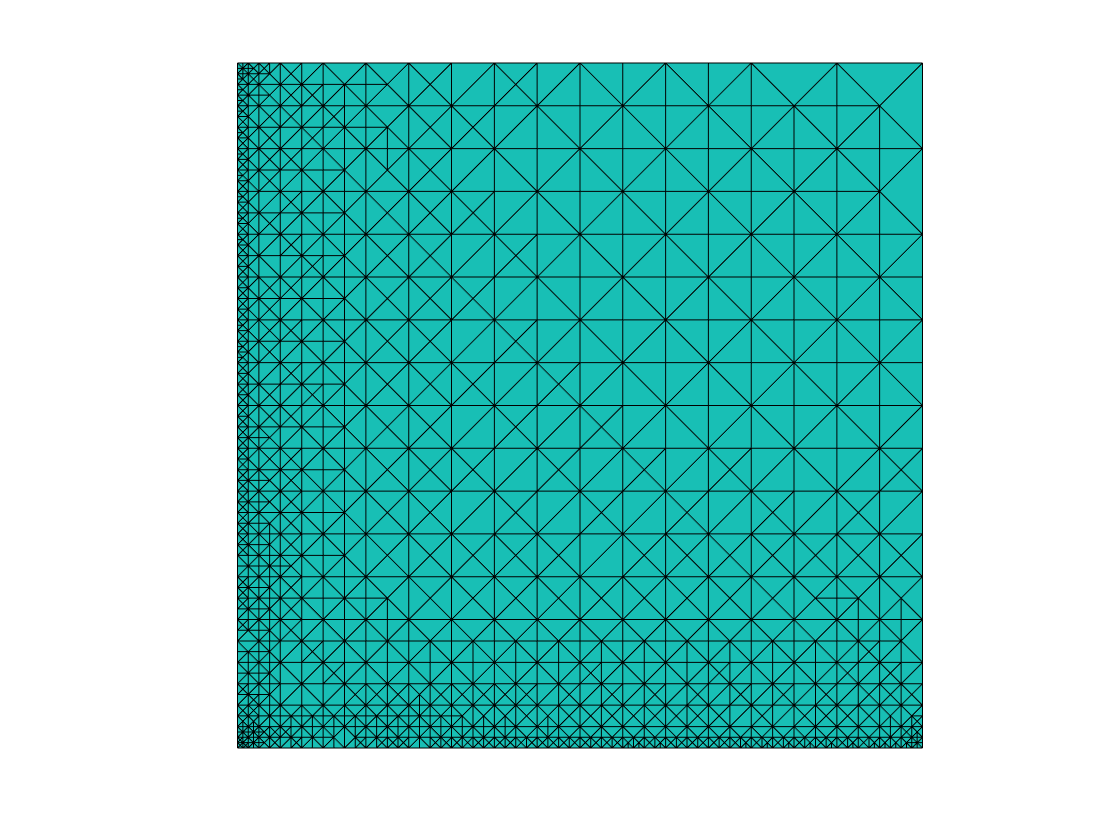}}
\caption{Convergence histories for a degenerate coefficient problem with $u =x^{4/3}-y^{4/3}$ with adaptive mesh refinements}
 \label{con_LSFEM_afem_x43}
\end{figure}

On (a), (b), and (c) of Fig. \ref{con_LSFEM_afem_x43}, we show  the numerical results with adaptive refinements using the adaptive $S_{1,0}\times S_1^2$ $L^2$-LSFEM and  $S_{2,0}\times S_1^2$ and $S_{3,0}\times S_2^2$ weighted-LSFEMs. 

For the adaptive $S_{1,0}\times S_1^2$ $L^2$-LSFEM, all norms except $\|u-u_h\|_0$ converge at a rate less than one , this is partly due to the fact the term $\|A\ratio \nabla \bsigma_h -f\|_0$ requires high regularity.  The error $\|u-u_h\|_0$  converges at a rate one, which is also less than optimal two, this is also partly due to the degenerate nature of the matrix $A$.

For the adaptive weighted-LSFEM, $\tri(u-u_h,\bsigma-\bsigma_h) \tri_h$ and $\|hA\ratio D_h^2(u-u_h)\|_0$ converge at the optimal order $k$. All other norms converge with a lower than optimal rate due the degenerate $A$. The error $\|u-u_h\|_0$ for $k=3$ only has a rate about $2$, which is also worse than the optimal $3$. That said, the higher order method behaves much better than the lower order methods with the adaptive methods.

On (d) of Fig. \ref{con_LSFEM_afem_x43}, we show an adaptively refined mesh generated by the adaptive $L^2$-LSFEM. It is clear from the graph that many refinements are along the $x$ and $y$ axises and the origin.

%rate for P1 uniform refinement
%L2 0.85; H1 0.45; LSFEMnorm 0.63
%
%rate for P1 adaptive refinement
%L2 0.9; H1 0.5; LSFEMnorm 0.83
%
%rate for P2 uniform refinement
%L2 1.35; H1 0.83; LSFEMnorm 1.5
%
%rate for P2 adaptive refinement
%L2 1.63; H1 1.23; LSFEMnorm 2
%
%rate for P3 uniform refinement
%L2 1.44; H1 0.845; LSFEMnorm 1.5
%
%rate for P3 adaptive refinement
%L2 2.15; H1 1.37; LSFEMnorm 2.5

\subsubsection{L-shaped domain problem}
We choose the L-shaped domain $\O = (-1,1)^2 \backslash [0,1)\times (-1,0]$. The exact solution $u$ in polar coordinates is
\beq
u(r,\theta) = r^{2/3} \sin (2/3 \theta).
\eeq
It is easy to check that $\Delta u =0$.
We consider several different cases:
$$
A_5 = 
\left(
\begin{array}{cc} 
	5r^{1/2}+1  & r^2/2\\
  	r^2/2  &  5r^{1/2}+1 
   \end{array}
\right), \quad
A_6 = 
\left(
\begin{array}{cc} 
	5 -1/\ln(r) & r^2/2\\
  	r^2/2  &  5 -1/\ln(r)
   \end{array}
\right),
$$

$$
\quad \mbox{and} \quad
A_7 =
\left(
\begin{array}{cc} 
	2  & r^2 \frac{xy}{|xy|} \\
  	r^2 \frac{xy}{|xy|} &  2 
   \end{array}
\right).
$$

The coefficients of the matrices $A_5$, $A_6$, and $A_7$ are H\"older continuous, uniformly continuous, and discontinuous, respectively.  We choose $a_{11}=a_{22}$ since we want to use the fact $u_{xx} + u_{yy} =0$, and we choose $a_{12}=a_{21}$ with an $r^2$ factor to make sure the righthand side $f = - a_{12}u_{xy}- a_{21}u_{yx}$ belongs to $L^2$.

\begin{figure}[!htb]
\centering 
%\subfigure{ 
%\includegraphics[width=0.75\linewidth]{convergence_LSFEM0_P1}}
%\hspace{0.01\linewidth}
\subfigure[H\"older continuous coefficient $A_5$]{
\includegraphics[width=0.41\linewidth]{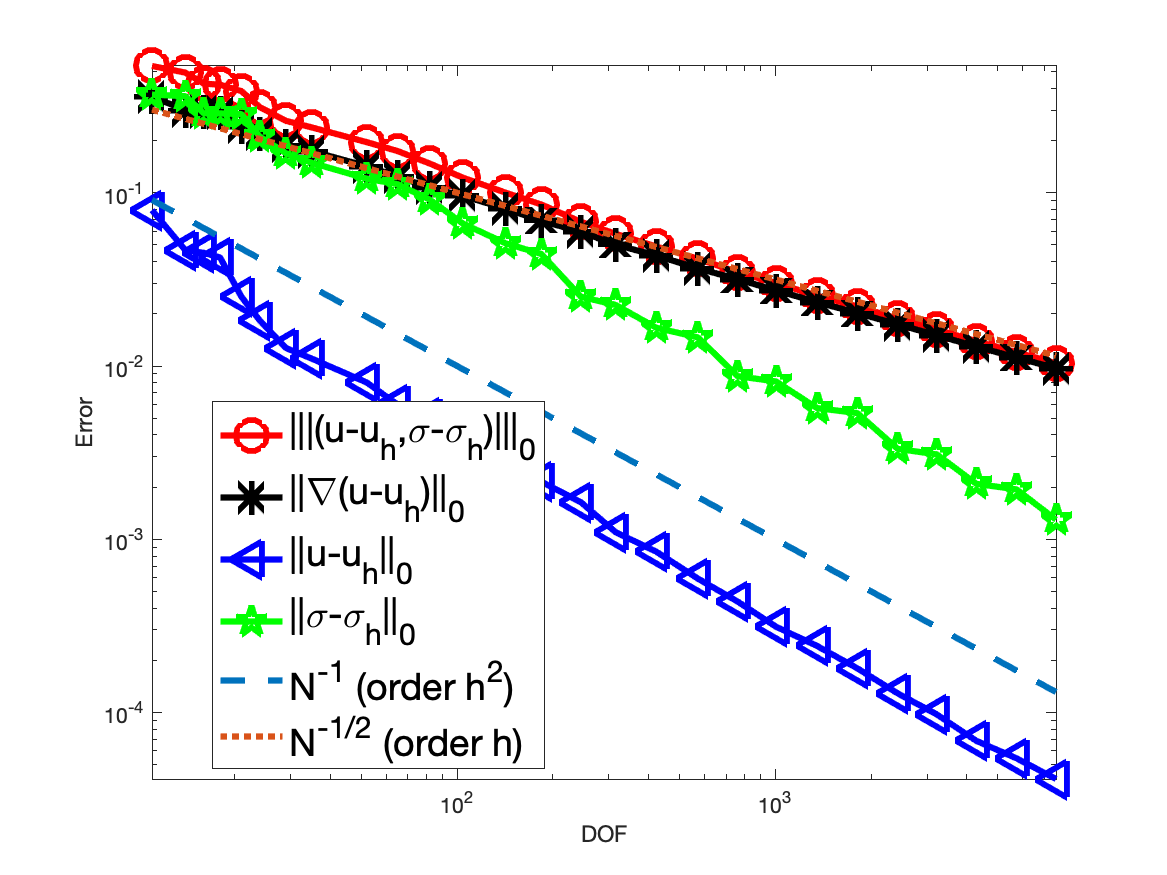}}
\subfigure[Uniformly continuous coefficient $A_6$]{
\includegraphics[width=0.41\linewidth]{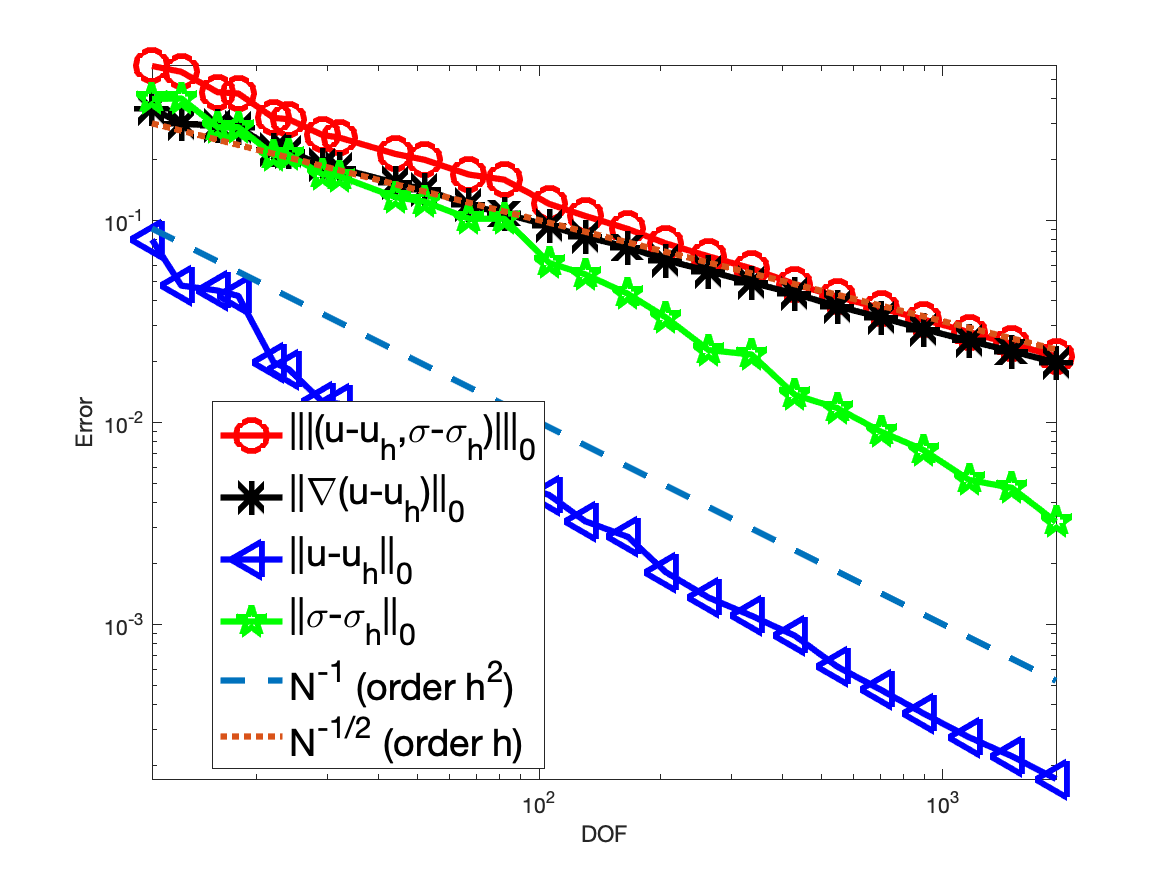}}
\subfigure[Discontinuous coefficient $A_7$]{
\includegraphics[width=0.41\linewidth]{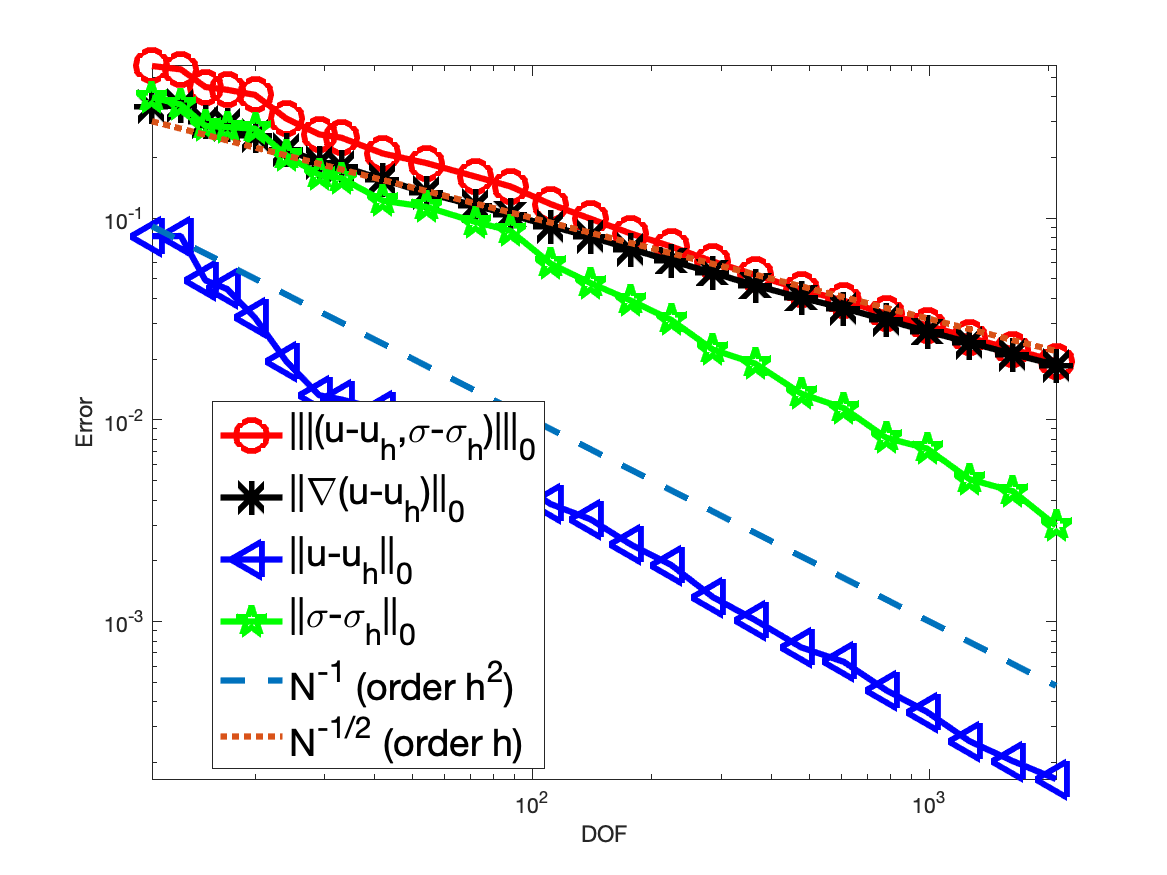}}
\subfigure[A refined mesh when $||\nabla  (u-u_h)||_0 \leq 0.01$, $N=7618$, $A_5$]{
\includegraphics[width=0.41\linewidth]{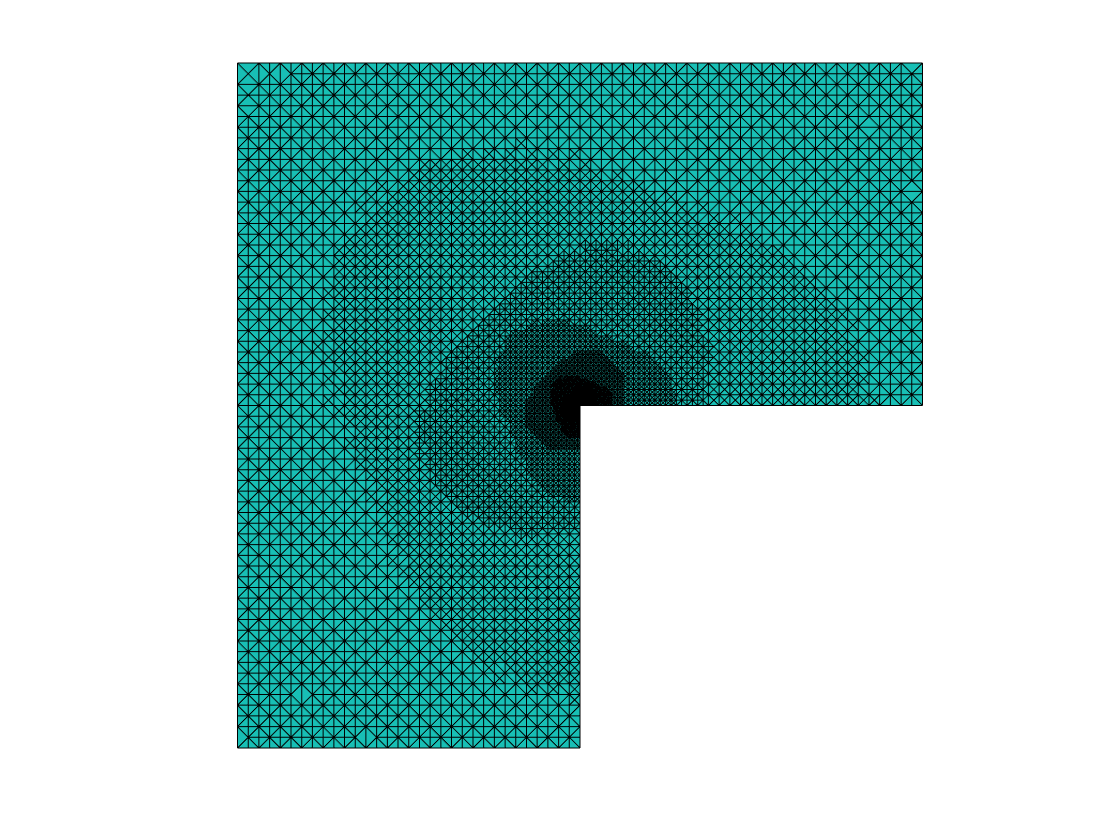}}
\caption{Convergence histories for adaptive $L^2$-LSFEM with $S_{1,0}\times S_1^2$  for the L-shaped problem}
 \label{con_LSFEM_S1_Lshape}
\end{figure}

\begin{figure}[htb]
\centering 
%\subfigure{ 
%\includegraphics[width=0.75\linewidth]{convergence_LSFEM0_P1}}
%\hspace{0.01\linewidth}
\subfigure[H\"older continuous coefficient $A_5$]{
\includegraphics[width=0.41\linewidth]{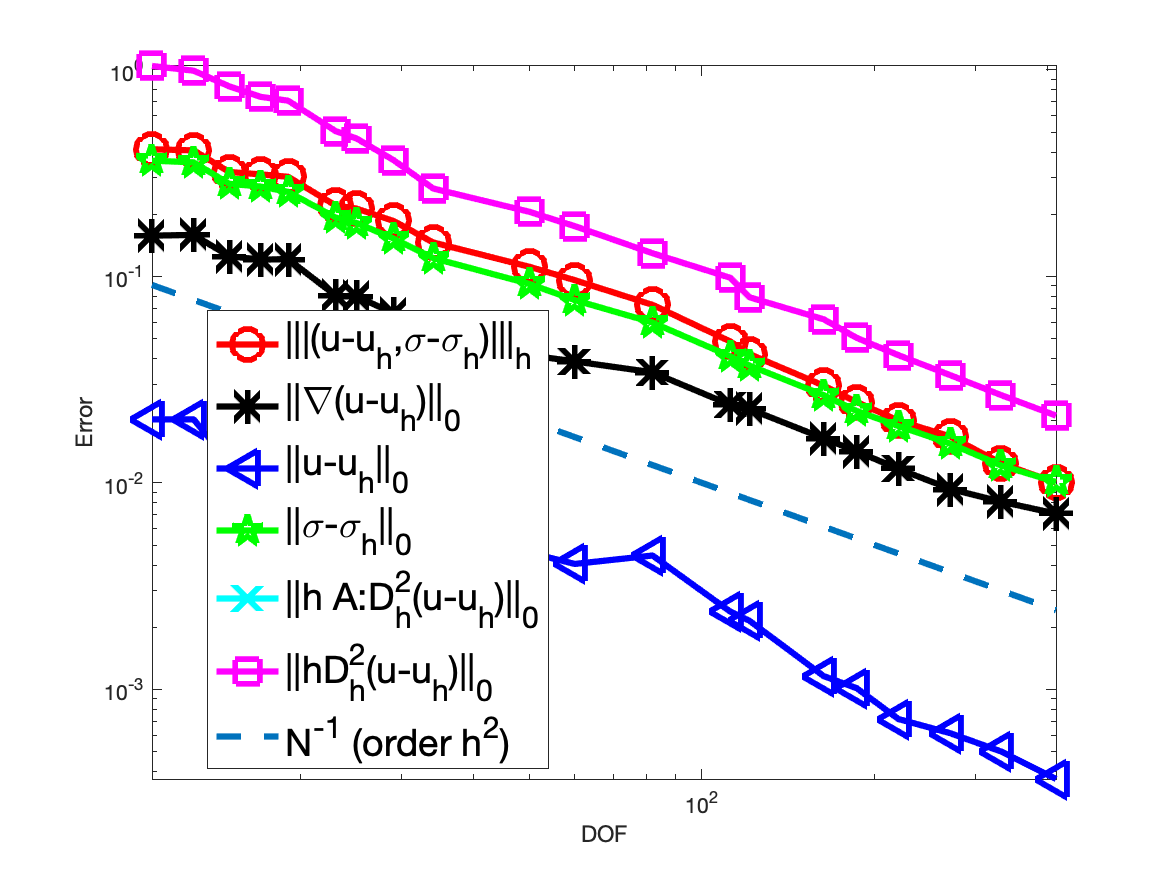}}
\subfigure[Uniformly continuous coefficient $A_6$]{
\includegraphics[width=0.41\linewidth]{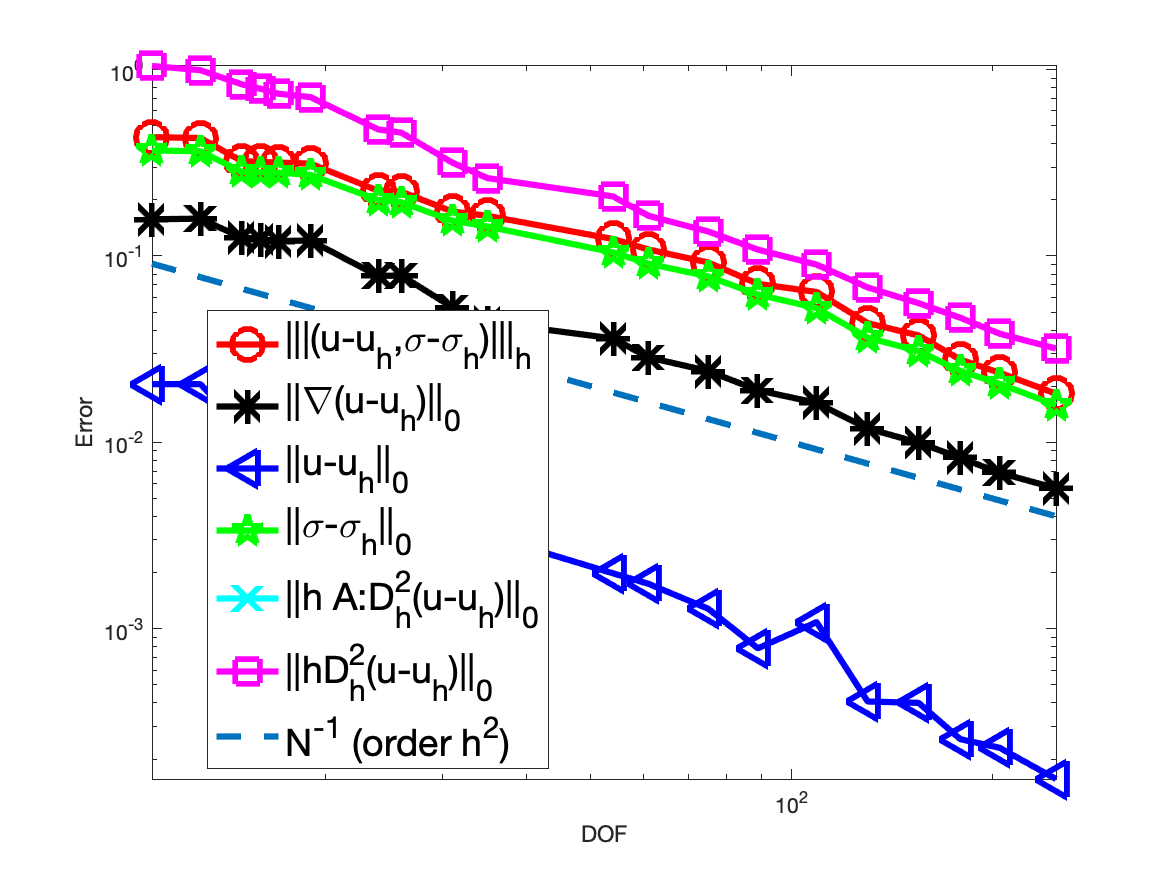}}
\subfigure[Discontinuous coefficient $A_7$]{
\includegraphics[width=0.41\linewidth]{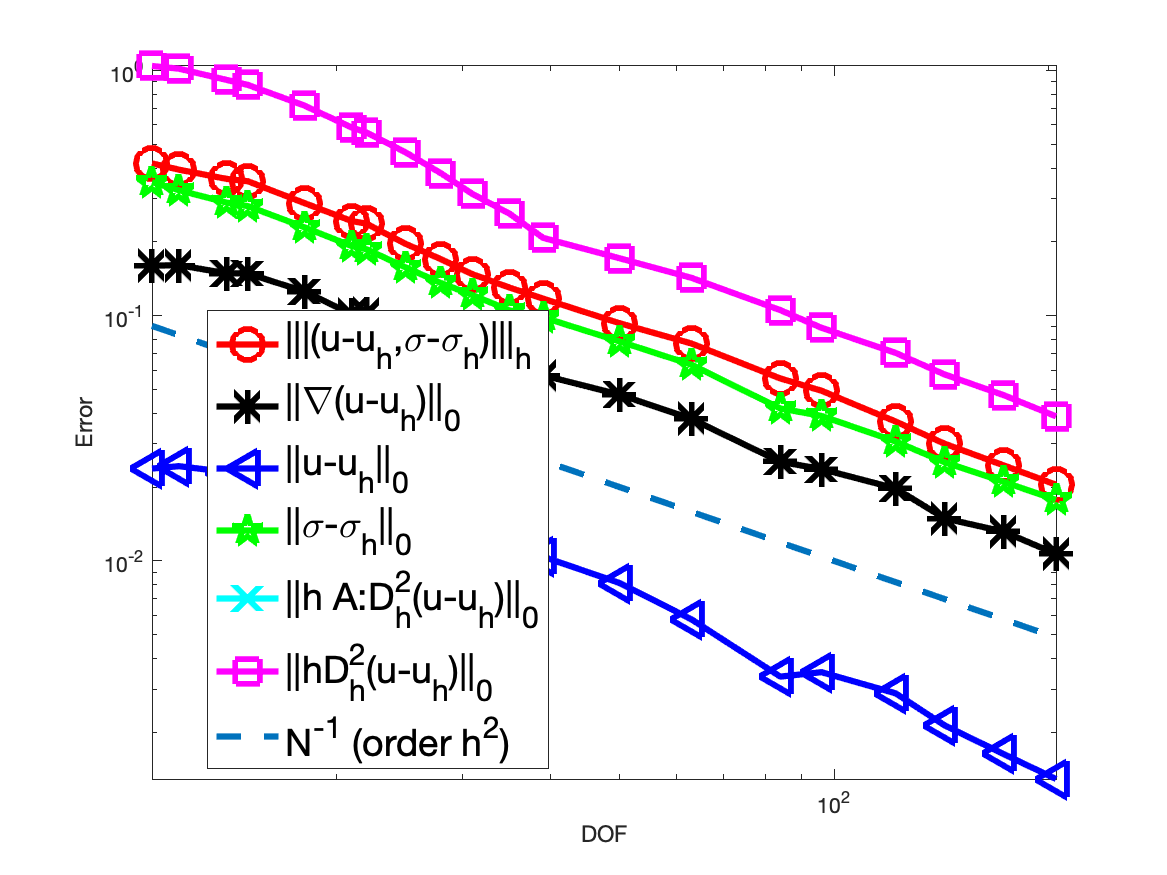}}
\subfigure[A refined mesh when $||\nabla  (u-u_h)||_0 \leq 0.01$, $N=271$, $A_5$]{
\includegraphics[width=0.41\linewidth]{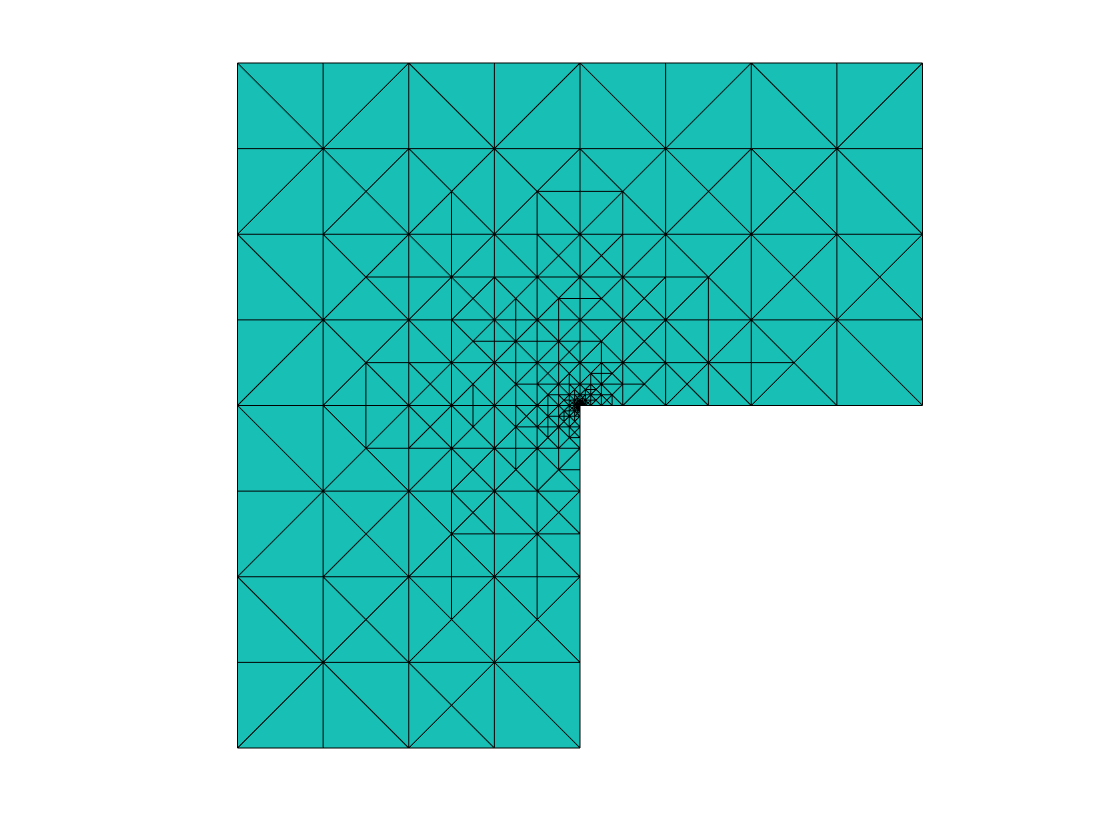}}
\caption{Convergence histories for adaptive weighted-LSFEM with $S_{2,0}\times S_1^2$  for the L-shaped problem}
 \label{con_LSFEM_S2_Lshape}
\end{figure}

\begin{figure}[htb]
\centering 
\subfigure[H\"older continuous coefficient $A_5$]{
\includegraphics[width=0.41\linewidth]{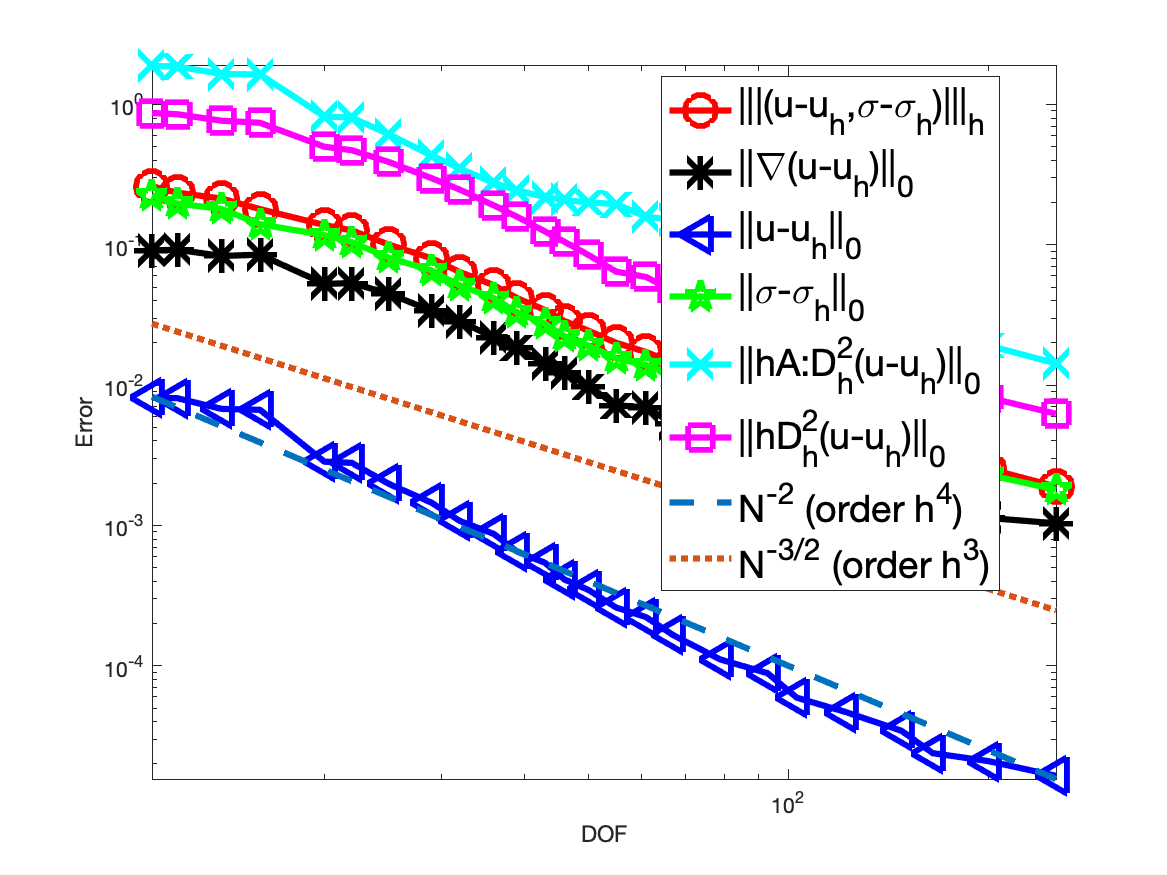}}
\subfigure[Uniformly continuous coefficient $A_6$]{
\includegraphics[width=0.41\linewidth]{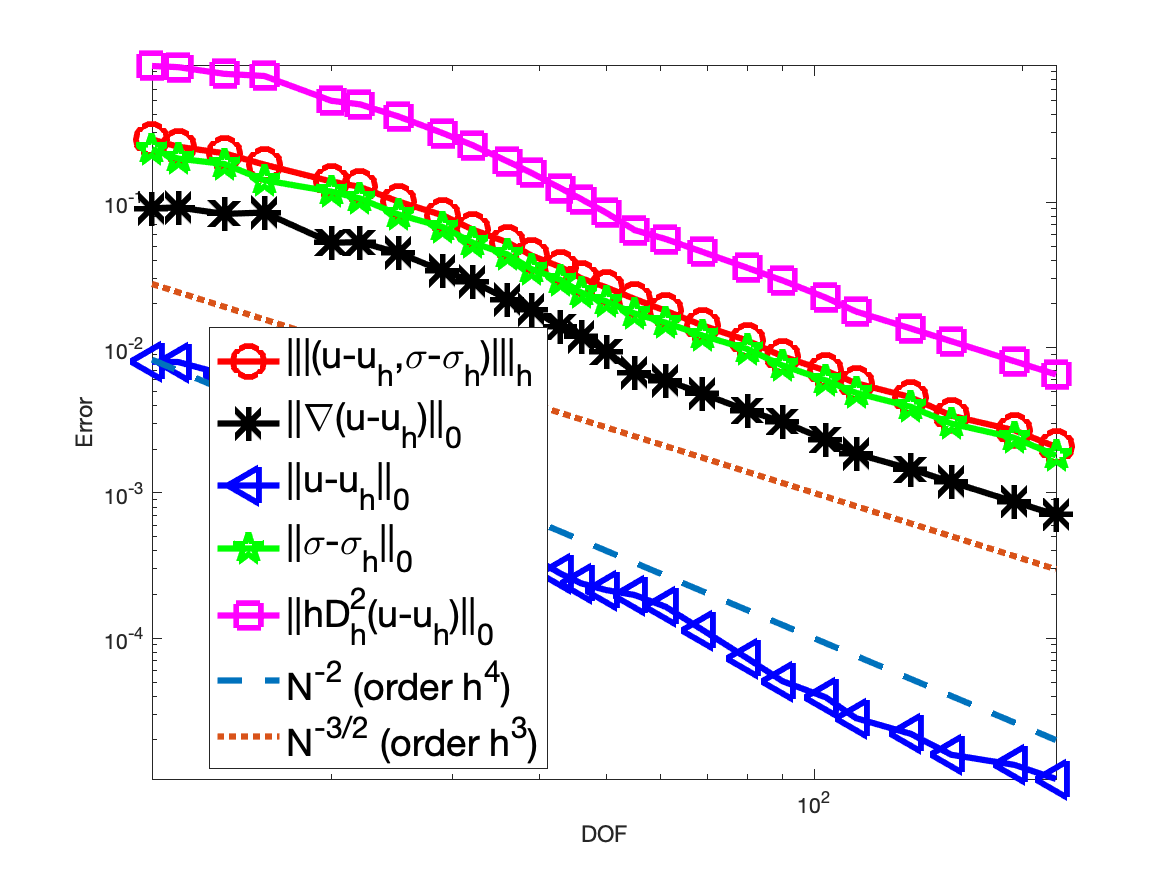}}
\subfigure[Discontinuous coefficient $A_7$]{
\includegraphics[width=0.41\linewidth]{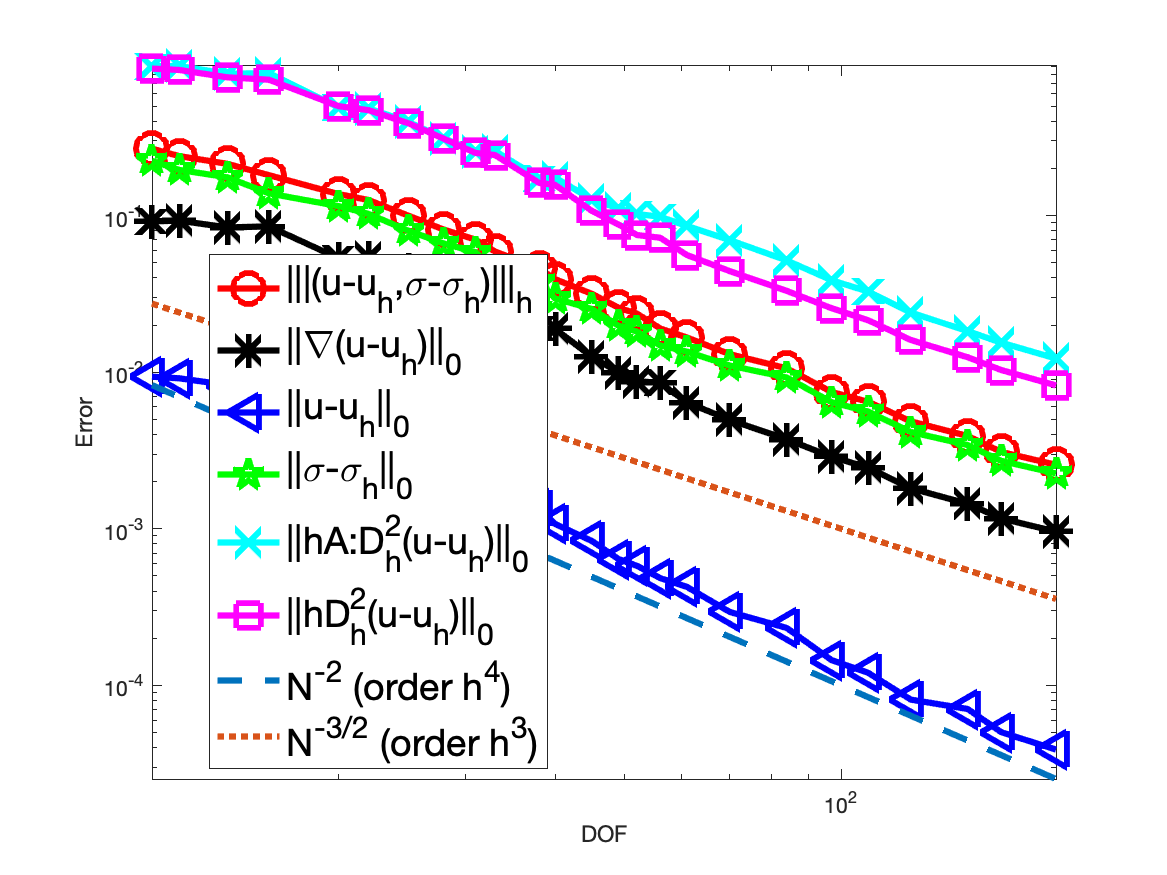}}
\subfigure[A refined mesh when $||\nabla  (u-u_h)||_0 \leq 0.01$, $N=50$, $A_5$]{
\includegraphics[width=0.41\linewidth]{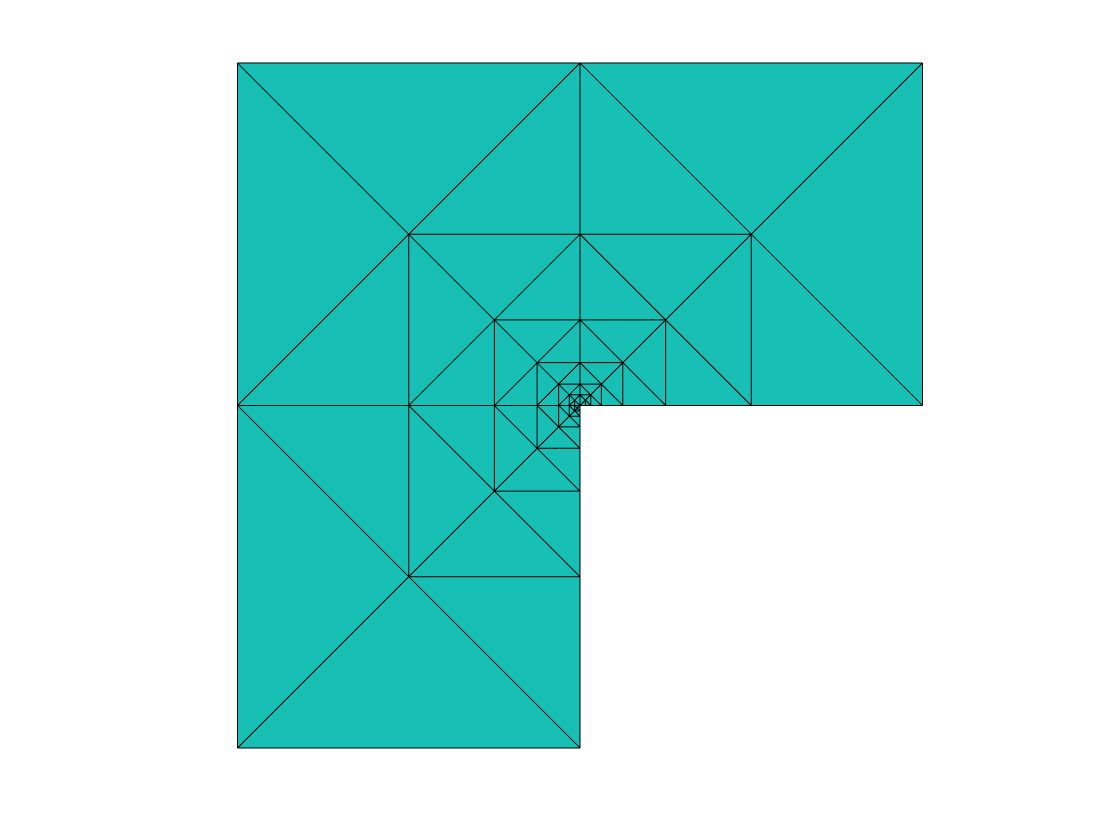}}
\caption{Convergence histories for adaptive weighted-LSFEM with $S_{3,0}\times S_2^2$  for the L-shaped problem}
 \label{con_LSFEM_S3_Lshape}
\end{figure}

In Fig. \ref{con_LSFEM_S1_Lshape}- Fig. \ref{con_LSFEM_S3_Lshape}, we show  the numerical results for the L-shaped problems with adaptive refinements using the adaptive $S_{1,0}\times S_1^2$ $L^2$-LSFEM and  $S_{2,0}\times S_1^2$ and $S_{3,0}\times S_2^2$ weighted-LSFEMs. All convergences orders are now optimal as if the solution and the matrix are smooth. On the (d) of Fig. \ref{con_LSFEM_S1_Lshape}- Fig. \ref{con_LSFEM_S3_Lshape}, we show the mesh when the $||\nabla  (u-u_h)||_0 \leq 0.01$ with the matrix $A_5$. It is also very clear that unlike the methods with a uniform mesh, high order methods combined with adaptive mesh refinements are superior compared with  lower oder methods. For our test problem with the H\"older continuous coefficients $A_5$, the weighted-LSFEM with $S_{3,0}\times S_2^2$ only uses  $50$ mesh points while the $L^2$-LSFEM with  $S_{1,0}\times S_1^2$ requires $7618$ nodes to reduce the $H^1$-semi norm of the error of $u$ to be less than $0.01$.

\end{document}